\newcommand{\field}[1]{\mathbb{#1}}
\newcommand{\B}{\field{B}}
\newcommand{\Z}{\field{Z}}
\newcommand{\dS}{\field{S}}
\newcommand{\dT}{\field{T}}
\newcommand{\dP}{\field{P}}
\newcommand{\cA}{{\cal A}}
\newcommand{\cB}{{\cal B}}
\newcommand{\cC}{{\cal C}}
\newcommand{\cD}{{\cal D}}
\newcommand{\cF}{{\cal F}}
\newcommand{\cG}{{\cal G}}
\newcommand{\cS}{{\cal S}}
\newcommand{\cL}{{\cal L}}
\newcommand{\cT}{{\cal T}}
\newcommand{\cP}{{\cal P}}
\newcommand{\cR}{{\cal R}}
\newcommand{\cU}{{\cal U}}
\newcommand{\cM}{{\cal M}}
\newcommand{\cH}{{\cal H}}
\newtheorem{theorem}{Theorem}
\newtheorem{lemma}{Lemma}
\newtheorem{example}{Example}
\newtheorem{remark}{Remark}
\newtheorem{corollary}{Corollary}
\date{$\,$\\[2.70ex]\normalsize\large\today}
\gdef\@punct{.\ \ }  
\def\@sect#1#2#3#4#5#6[#7]#8{%
  \ifnum #2>\c@secnumdepth
     \def\@svsec{}
  \else
     \refstepcounter{#1}\edef\@svsec{%
     \ifnum #2>0{{\csname the#1\endcsname}}.\fi%
    \hskip .5em}
  \fi
  \@tempskipa #5\relax
  \ifdim \@tempskipa>\z@
     \begingroup #6\relax
       \@hangfrom{\hskip #3\relax\@svsec}{\interlinepenalty \@M #8\par}
     \endgroup
     \csname #1mark\endcsname{#7}
     \addcontentsline{toc}{#1}{\ifnum #2>\c@secnumdepth\else
          \protect\numberline{\csname the#1\endcsname}\fi#7}
  \else
     \def\@svsechd{#6\hskip #3\@svsec #8\@punct\csname #1mark\endcsname{#7}
     \addcontentsline{toc}{#1}{\ifnum #2>\c@secnumdepth \else
          \protect\numberline{\csname the#1\endcsname}\fi#7}}
  \fi
  \@xsect{#5}}
\def\@ssect#1#2#3#4#5{\@tempskipa #3\relax
  \ifdim \@tempskipa>\z@
    \begingroup #4\@hangfrom{\hskip #1}{\interlinepenalty \@M #5\par}\endgroup
  \else \def\@svsechd{#4\hskip #1\relax #5\@punct}\fi
  \@xsect{#3}}
\begin{document}

\bibliographystyle{plain}

\title{
\bf Explicit Baranyai Partitions for Quadruples,\\
Part I: Quadrupling Constructions\\[1.80ex]
}
\author{
{\sc Yeow Meng Chee}%
\thanks{Department of Industrial Systems Engineering and Management,
National University of Singapore, Singapore, e-mail: {\tt pvocym@nus.edu.sg}.}
\and
{\sc Tuvi Etzion}%
\thanks{Department of Computer Science, Technion,
Haifa 3200003, Israel, e-mail: {\tt etzion@cs.technion.ac.il}.
Tuvi Etzion was supported in part by the Binational Science Foundation (BSF) grant no. 2018218}
\and
{\sc Han Mao Kiah}%
\thanks{School of Physical and Mathematical Sciences,
Nanyang Technological University, Singapore, e-mail: {\tt hmkiah@ntu. edu.sg}.}
\and
{\sc Alexander Vardy}%
\thanks{Department of Electrical \& Computer Engineering and
Department of Computer Science \& Engineering,
University of California San Diego, La Jolla, CA 92093, USA,
e-mail: {\tt avardy@ucsd.edu}.}
\and
{\sc Chengmin Wang}%
\thanks{School of Science, Taizhou University, Taizhou 225300, China,
e-mail: {\tt chengmin\_wang@163.com}.
Chengmin Wang was supported by Natural Science Foundation
of Jiangsu (Grant No. BK20171318), Qing Lan Project of Jiangsu
and Overseas Study Program of Jiangsu University.}
}

\maketitle
\thispagestyle{empty}

\begin{abstract}
\noindent
It is well known that, whenever $k$ divides $n$,
the complete $k$-uniform hypergraph on $n$ vertices
can be partitioned into disjoint perfect matchings.
Equivalently, the set of $k$-subsets of an $n$-set 
can be partitioned into parallel classes so that each
parallel class is a partition of the $n$-set. 
This result~is~known as Baranyai's theorem, which guarantees
the existence of \emph{Baranyai partitions}.
Unfortunately, the proof of Baranyai's theorem uses network flow
arguments, making this result non-explicit. In particular,~there
is no known method to produce Baranyai partitions in time and space
that scale linearly with the number of hyperedges in the hypergraph.
It is desirable for certain applications to have an explicit
construction that generates Baranyai partitions in linear time.
Such an efficient construction is known for $k=2$ and $k=3$.
In this paper, we present an explicit recursive quadrupling
construction for $k=4$ and $n=4t$, where
$t \equiv 0,3,4,6,8,9 ~(\text{mod}~12)$.
In a follow-up paper (Part II), the other values of~$t$, namely
$t \equiv 1,2,5,7,10,11 ~(\text{mod}~12)$,
will be considered.
\end{abstract}

\vspace{0.5cm}

\noindent
\centerline{{\bf Keywords:} Baranyai partitions, Baranyai's theorem, near-one-factorization, one-factorization}


\newpage
\section{Introduction}
\label{sec:introduction}

A \emph{hypergraph} is a pair $G=(V,E)$, where $V$ is a set of
elements, called \emph{vertices}, and $E$ is a set~of~non\-empty
subsets of $V$, called {\em hyperedges} or \emph{edges}. The size
of the vertex set is called the \emph{order} of~the~hyper\-graph.
A \emph{$k$-uniform hypergraph} is a hypergraph such that all edges
in $E$ have size $k$.  A 2-uniform~hypergraph is also called a
\emph{graph}.
Let $\binom{V}{k}$ denote the set of all $k$-subsets of $V$.
A hypergraph $(V,\binom{V}{k})$~is~called a
\emph{complete $k$-uniform hypergraph}.  A \emph{one-factor} or
a~{\em parallel class} in a $k$-uniform hypergraph is a~set of 
disjoint edges that partition the set of vertices $V$.
A \emph{one-factorization} of a hypergraph $G=(V,E)$~is~a~pair $(V,F)$,
where $F$ is a set of parallel classes that partition the
set of edges $E$.

Given a complete $k$-uniform hypergraph 
of order $n$, all of its
its edges can be partitioned~into parallel classes.
Of course, this requires that $k$ divides $n$, and the 
number of parallel classes is 
$\binom{n}{k}/{\frac{n}{k}}={\binom{n-1}{k-1}}$.
This result is well known as {Baranyai's theorem};
the resulting
partitions are called Baranyai partitions.

More precisely,
given a set $V$ of cardinality $n$,
an \emph{$(n,k)$-Baranyai partition of $V$},
denoted $\mathrm{BP}(n,k)$, is a one-factorization
of the complete $k$-uniform hypergraph \smash{$G=(V,\binom{V}{k})$}.
The existence of $\mathrm{BP}(n,2)$~is known since the 19th century;
extensive study of related one-factorizations can be found in~\cite{Wal97}.
Explicit and efficient recursive constructions of $\mathrm{BP}(n,3)$,
for every $n$ divisible by~$3$, were
given 
by Peltesohn~\cite{Pel36} in her doctoral thesis from 1936.
Bermond proved the existence of $\mathrm{BP}(n,4)$ in the early 1970s,
using coloring techniques from graph theory, but his result was never
published~\cite[p.\,537]{LiWi01}.
In fact, after~almost fifty 
years, it does not appear possible
to reconstruct the proof in its entirety~\cite{Ber19}, and it is
not completely clear which values of $n$ have been solved.
Moreover, it seems that the techniques used by Bermond would
not lead to a construction whose complexity scales linearly
in the number of hyperedges.

The general case was resolved by Baranyai~\cite{Bar75} in 1975,
using network flow methods. 
Specifically,~Ba\-ra\-nyai~\cite{Bar75}
established the following existence result.
\begin{theorem}
\label{Baranyai theorem}
If\/ $k$ and $n$ are positive integers,
then\/ $\mathrm{BP}(n, k)$ exists whenever $k\mid n$.
\end{theorem}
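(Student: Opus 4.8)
The plan is to establish a stronger, quantitatively balanced statement by induction and recover Theorem~\ref{Baranyai theorem} as the end of the induction, since the bare existence statement carries too little structure to survive an inductive step. Write $m=\binom{n-1}{k-1}$ and $t=n/k$, and call a partition of $\binom{[\ell]}{k}$ into classes $\mathcal{A}_1,\dots,\mathcal{A}_m$ \emph{balanced} if each $\mathcal{A}_i$ is a family of pairwise disjoint $k$-subsets of $[\ell]$ (a partial parallel class) and every size $|\mathcal{A}_i|$ equals $\lfloor \binom{\ell}{k}/m\rfloor$ or $\lceil \binom{\ell}{k}/m\rceil$. I would prove by induction on $\ell$ that a balanced partition of $\binom{[\ell]}{k}$ exists for every $0\le \ell\le n$. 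At $\ell=n$ we have $\binom{n}{k}/m=n/k=t\in\mathbb{Z}$, so every class contains exactly $t$ pairwise disjoint $k$-sets covering $kt=n$ points; hence each $\mathcal{A}_i$ is a parallel class and the collection is a $\mathrm{BP}(n,k)$. Balancing class sizes is equivalent to balancing the number of \emph{uncovered} points $\ell-k|\mathcal{A}_i|$ per class, and it is this second reading that the inductive step will exploit.

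The engine of the argument is an \emph{integer-making lemma}: for any real matrix $B=(b_{ij})$ there is an integer matrix $A=(a_{ij})$ with $a_{ij}\in\{\lfloor b_{ij}\rfloor,\lceil b_{ij}\rceil\}$ whose every row sum and column sum lies in the floor--ceiling bracket of the corresponding sum of $B$, integer margins being preserved exactly. I would obtain this from the integrality of flows: model the admissible roundings as the lattice points of a transportation polytope whose constraint matrix is a network (totally unimodular) matrix, observe that the polytope is nonempty and bounded, and take an integral vertex. Equivalently, round every entry down and then repeatedly push one unit around an alternating cycle in the bipartite support of the fractional residues until none remain.

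The base case $\ell<k$ is vacuous. For the step from $\ell-1$ to $\ell$, split $\binom{[\ell]}{k}$ according to the new point $\ell$: the $k$-sets avoiding $\ell$ are exactly $\binom{[\ell-1]}{k}$, while those containing $\ell$ are in bijection with $\binom{[\ell-1]}{k-1}$ via $B\mapsto B\cup\{\ell\}$. Because all sets through $\ell$ in one class would share the point $\ell$, each class may receive \emph{at most one} set through $\ell$, so exactly $\binom{\ell-1}{k-1}\le m$ of the classes must receive one. I would first use the integer-making lemma to fix, in one stroke, balanced stage-$\ell$ sizes $a_i$ together with a splitting $a_i=b_i+e_i$ where $e_i\in\{0,1\}$ records whether class $i$ receives a set through $\ell$, subject to $\sum_i e_i=\binom{\ell-1}{k-1}$ and the $b_i$ being balanced for the ground set $[\ell-1]$; the induction hypothesis then furnishes a balanced partition of $\binom{[\ell-1]}{k}$ into classes of sizes $b_i$. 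It remains to graft the $\binom{\ell-1}{k-1}$ sets through $\ell$ onto the classes with $e_i=1$.

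The main obstacle is precisely this grafting. Each class with $e_i=1$ must be assigned a single $(k-1)$-subset $B_i\subseteq[\ell-1]$; the $B_i$ must be pairwise distinct and exhaust $\binom{[\ell-1]}{k-1}$; and crucially each $B_i$ must lie in the \emph{uncovered} part of its class, for otherwise $B_i\cup\{\ell\}$ would meet a $k$-set already present. This is an exact system of distinct representatives with disjointness constraints---a perfect matching in the bipartite graph joining each $(k-1)$-subset to the classes whose uncovered point set contains it---and it is not guaranteed by size balance alone; this is exactly why the inductive hypothesis must be the balanced one rather than the bare statement. I would verify the governing Hall/deficiency condition using the maintained balance of uncovered points (so that no family of $(k-1)$-subsets can be starved of eligible classes), realize the assignment as a fractional flow that is feasible by this counting, and then apply the integer-making lemma to round it to an integral perfect assignment, the size adjustments keeping the stage-$\ell$ partition balanced. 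Threading the disjointness through the flow model is the delicate point, everything else reducing to binomial bookkeeping. It is worth noting that this reliance on flow integrality is exactly the feature, highlighted in the introduction, that prevents the construction from running in time linear in $\binom{n}{k}$.
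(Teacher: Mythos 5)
Your overall architecture---induction on the number of points $\ell$, with an integer-making (flow-rounding) lemma driving each step---is exactly the spirit of Baranyai's argument, which the paper does not reprove but quotes from~\cite{Bar75}. Your integer-making lemma itself is standard and correct. The genuine gap is your induction hypothesis: size balance of the classes is too weak to make the grafting step go through, and your appeal to ``the maintained balance of uncovered points'' to verify Hall's condition is an assertion, not a proof. Balance controls only the \emph{number} of uncovered points per class, never their \emph{location}, and the induction hypothesis hands you an arbitrary balanced partition. Look at the last step, $\ell=n$ from $\ell-1=n-1$: there $\binom{n-1}{k}/\binom{n-1}{k-1}=(n-k)/k$ is an integer, so every class has exactly $t-1$ blocks and leaves exactly $k-1$ points of $[n-1]$ uncovered; grafting succeeds if and only if the $\binom{n-1}{k-1}$ uncovered $(k-1)$-sets of the classes are pairwise \emph{distinct}. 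Nothing in your invariant forces this. For instance, with $k=3$, $n=9$ the classes $\{\{1,2,3\},\{4,5,6\}\}$ and $\{\{1,2,4\},\{3,5,6\}\}$ both leave $\{7,8\}$ uncovered, and there is no obstruction in your framework to completing these two classes to a balanced partition of all $56$ triples of $[8]$ into $28$ classes of two disjoint triples; in any such completion some pair of $[8]$ is uncovered in no class, the bipartite eligibility graph has an isolated left vertex, Hall fails outright, and the partition supplied by your induction hypothesis is simply not extendable. A degree count confirms the problem is structural: automatically each point of $[\ell-1]$ is covered in exactly $\binom{\ell-2}{k-1}$ classes, so a $(k-1)$-set is eligible for at least $\binom{n-1}{k-1}-(k-1)\binom{\ell-2}{k-1}$ classes, which suffices for Hall early in the induction but becomes vacuous (indeed negative) as $\ell$ approaches $n$---precisely where you need it.

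The repair is Baranyai's actual formulation, which strengthens the invariant so drastically that no Hall/SDR argument is needed at all: work with \emph{multiset} classes, each partitioning $[\ell]$ into exactly $n/k$ possibly empty parts of sizes $\le k$, and maintain that every $j$-subset of $[\ell]$ ($0\le j\le k$) occurs in exactly $\binom{n-\ell}{k-j}$ classes. At $\ell=n$ this forces every part to have size $k$ and every $k$-set to occur once, i.e.\ a $\mathrm{BP}(n,k)$. The step from $\ell$ to $\ell+1$ then assigns the new point fractionally to each part $A$ of each class with weight $(k-|A|)/(n-\ell)$; these weights sum to $1$ within every class and the relevant margins are integral, so one application of your own integer-making lemma (total unimodularity of the network matrix) rounds this to an integral choice of one absorbing part per class while preserving all multiplicities exactly. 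In other words: keep your engine, but replace ``balanced sizes'' by exact multiplicities for \emph{all} intersection sizes simultaneously---the distinct-representatives step you identified as delicate then disappears, because it was never going to be provable from balance alone.
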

\noindent
The proof of the foregoing theorem of Baranyai involves several
iterations of a network flow algorithm. Unfortunately, even today,
the best-known algorithm~\cite{Orl13} for computing the maximum
flow in a network $G=(V,E)$ requires $O(|V|{\cdot}|E|)$ time.
The complexity of producing Baranyai partitions
is even higher. For example, the iterative implementation
given by Brouwer and Schrijver~\cite{BrSc79}, 
based on network flow techniques, has complexity $O(n^{3k-2})$ for a fixed $k$.
However, for certain applications, it would be desirable to generate 
Baranyai partitions in time and space that scale \emph{linearly with the
number of hyperedges}.~~

Hence, we say that a construction of $\mathrm{BP}(n,k)$ is \emph{efficient}
if its complexity is $O\bigl(\binom{n}{k}\bigr)$.
Our goal herein is to provide 
an explicit and efficient construction of Baranyai partitions for $k = 4$.
Our results also~lead,\linebreak
via enumerative coding, to efficient algorithms
for 
generating a specific parallel class in $\mathrm{BP}(n,4)$
or~a~specific hyperedge in the parallel class.
The complexity of these algorithms is $O(n)$.

To this end, we introduce efficient recursive \emph{quadrupling constructions}
for $\mathrm{BP}(4t,4)$. Specifically,~we will show how to construct
$\mathrm{BP}(4t,4)$, given
$\mathrm{BP}(t+\delta_1,4)$ and $\mathrm{BP}(t+\delta_2,3)$,
where 
$0 \leq \delta_1 \leq 3$ and $0 \leq \delta_2 \leq 2$.
In this paper, we provide such recursive constructions
for $t \equiv 0,3,4,6,8,9 \!\pmod{12}$.
In other words, we prove the following theorem.
\begin{theorem}
\label{thm:main}
If\kern1pt\ $t \equiv 0,3,4,6,8,9 \!\pmod{12}$
and there exists an efficient construction for $\mathrm{BP}(t+\delta,4)$,
where $0 \leq \delta \leq 3$,
then there exists an efficient construction for $\mathrm{BP}(4t,4)$.
\end{theorem}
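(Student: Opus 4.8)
The plan is to split the $4t$ vertices into four \emph{groups} $V_1, V_2, V_3, V_4$, each identified with $\Z_t$, and to classify every $4$-subset by its \emph{profile}, i.e., the multiset recording how many of its four elements fall in each group. Up to relabeling the groups, the possible profiles are $(4)$, $(3,1)$, $(2,2)$, $(2,1,1)$, and $(1,1,1,1)$. The whole difficulty is that a parallel class must partition all $4t$ vertices, covering each group in exactly $t$ vertices, so edges of different profiles must be interleaved within a single parallel class; no profile except the transversal one is self-sufficient. The total number of parallel classes to be produced is $\binom{4t-1}{3}$, and I would verify at the end that the constructed classes number exactly this and cover each edge once.

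First I would dispose of the transversal edges (profile $(1,1,1,1)$), which are the only ones that resolve by themselves. Writing each such edge as $(a,b,c,d) \in \Z_t^4$, the $t^3$ families $\{ (a, a+x, a+y, a+z) : a \in \Z_t \}$, indexed by $(x,y,z) \in \Z_t^3$, form $t^3$ parallel classes that partition all $t^4$ transversal edges, each class covering every group exactly once. This step is explicit, linear-time, and needs no congruence assumption.

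The core of the argument is to package the remaining profiles $(4), (3,1), (2,2), (2,1,1)$ into parallel classes, and here I would use the two recursive ingredients. I would pad each group with $\delta_1 = (-t) \bmod 4$ \emph{ideal points}, so that each padded group has size $t+\delta_1 \equiv 0 \pmod 4$, and invoke the efficiently-constructible $\mathrm{BP}(t+\delta_1,4)$ to organize the quadruples; the edges incident with ideal points are then reinterpreted as the cross-group edges that complete the cover of the real points. The interaction of the active groups in the $(2,1,1)$ and $(3,1)$ patterns is scheduled by a $\mathrm{BP}(t+\delta_2,3)$ with $\delta_2 = (-t) \bmod 3$; this ingredient is free, since Peltesohn's construction supplies $\mathrm{BP}(n,3)$ efficiently for every $n$ divisible by $3$. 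Forcing every resulting parallel class to be balanced (each group covered exactly $t$ times) is where divisibility and parity constraints enter: for instance, the parity of $t$ dictates whether the $(2,2)$ edges across a pair of groups resolve via one-factorizations (even $t$) or near-one-factorizations (odd $t$), while aligning the ideal-point schedule with the $\mathrm{BP}(\cdot,3)$ schedule constrains $t \bmod 3$. The admissible residues $t \equiv 0,3,4,6,8,9 \pmod{12}$ should be exactly those for which all these local conditions can be met simultaneously; the complementary residues, which fail one of them, are deferred to Part~II.

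The step I expect to be the main obstacle is precisely this balancing: showing that the padded-group quadruple schedule from $\mathrm{BP}(t+\delta_1,4)$, the triple schedule from $\mathrm{BP}(t+\delta_2,3)$, and the transversal classes merge into genuine parallel classes of the full hypergraph, with each real edge covered exactly once and each ideal point consistently eliminated. This is the point at which Baranyai's proof resorts to a non-constructive network-flow argument, and replacing it by an explicit, case-based algebraic schedule is the heart of the construction. Once the schedule is exhibited, efficiency is routine: each ingredient is efficient by hypothesis (for $k=4$) or by Peltesohn (for $k=3$), the transversal classes cost $O(t^4) = O\bigl(\binom{4t}{4}\bigr)$, and every produced edge is emitted with $O(1)$ amortized work, so the total running time and space are $O\bigl(\binom{4t}{4}\bigr)$, which is exactly the claimed efficiency.
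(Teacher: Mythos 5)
Your high-level architecture matches the paper's: four copies of $\Z_t$, classification of quadruples by profile (the paper's five Groups), ideal points $\Omega_i$ feeding a $\mathrm{BP}(t+\delta_1,4)$ and a $\mathrm{BP}(t+\delta_2,3)$, one-factorizations versus near-one-factorizations according to the parity of $t$, and a final count against $\binom{4t-1}{3}$. But your opening move is a genuine error that sinks the plan for odd $t$ (the residues $3$ and $9 \pmod{12}$): you dispose of all $t^4$ transversal quadruples into $t^3$ standalone classes and assert this ``needs no congruence assumption.'' It does. A parallel class consisting solely of $(2,2)$-profile quadruples covers each group by contributions of size $0$ or $2$, which is impossible when $t$ is odd; likewise, the near-one-factor-based classes you invoke for odd $t$ in the $(2,1,1)$ profile each leave one isolated vertex in every group, and the only quadruple that can absorb four isolated vertices, one per group, is a transversal one. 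So for odd $t$ each such class must contain exactly one quadruple from configuration $(1,1,1,1)$ --- and under your scheme they have all been spent. The paper instead \emph{reserves} pairwise disjoint families of transversal quadruples with exact pair-multiplicity properties (each pair $\{(y,i),(z,j)\}$ in the relevant coordinate positions covered exactly $\frac{t-1}{2}$ times: the sets $\cG_1,\cG_2,\cG_3$ and $\cH_1,\ldots,\cH_6$, realized via the coset-structured subsets $\cL_1,\ldots,\cL_4$ of $\cA+\cB+\cC+\cD$), weaves them one per class into the Type 3 and Type 4 classes (and into Type 1 when $t \equiv 1 \pmod 4$), and partitions only the leftover $\cL_5$ by your coset trick. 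Making this bookkeeping consistent is the heart of the proof; it forces $t \geq 15$ (whence an initial $\mathrm{BP}(12,4)$), and it requires a separately modified partition ($\cL'_1,\ldots,\cL'_5$) for $t \equiv 9 \pmod{12}$, the paper's hardest case. None of this can even be started once the transversal edges are committed upfront.

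A second gap concerns $t \equiv 4, 8 \pmod{12}$. There $t$ is not divisible by $3$, and the paper's Group-2 machinery (Latin-square scheduling against a $\mathrm{BP}(t,3)$) is developed in Part I only for $3 \mid t$; when $3 \nmid t$, Group-2 classes must mix in Group-3 or Group-4 quadruples, and that construction is deferred to Part II. The paper therefore does not run the quadrupling construction at all for these two residues: it applies a doubling construction twice (Theorem~\ref{thm:4_8m12}), built from resolvable Steiner quadruple systems --- which admit efficient constructions exactly for orders $\equiv 4, 8 \pmod{12}$ --- together with a one-factorization of $K_t$. Your uniform treatment via $\mathrm{BP}(t+\delta_2,3)$ padding is unsubstantiated precisely at these residues, and the resolvable-$\mathrm{SQS}$ ingredient is absent from your proposal. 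In sum, of the six residues claimed, your sketch is plausible (modulo details) only for $t \equiv 0, 6 \pmod{12}$; the cases $3, 9$ fail on the transversal-edge parity obstruction and the cases $4, 8$ lack the required alternative route.
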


\looseness=-1
In a follow-up paper~\cite{CEKVW20} we consider the remaining cases,
namely $t \equiv 1,2,5,7,10,11 \!\pmod{12}$.
Together with Theorem\,\ref{thm:main}, this makes it possible to
efficiently construct $\mathrm{BP}(n,4)$ for every $n$ divisible by $4$.

\looseness=-1
The rest of this paper is organized as follows. We begin in
Section\,\ref{sec:designs} with some background~on~combinatorial
designs, which is necessary for our constructions. The discussion
in Section\,\ref{sec:designs} briefly covers
Steiner systems, large sets of Steiner systems, resolvable Steiner systems,
one-factorizations, near-one-factorizations, and Latin squares.
In Section\,\ref{sec:doubling}, a doubling construction for
$\mathrm{BP}(2t,4)$ from $\mathrm{BP}(t,4)$~is~presented
for the case where $t \equiv 4$ or $8 \!\pmod{12}$.
This construction is based on resolvable Steiner~quadruple systems
and one-factorizations of the complete graph $K_t$.
In Section\,\ref{sec:general}, the ingredients
for our quad\-rupling constructions are discussed.
First, the set of quadruples is partitioned into five {groups}.
Then~the parallel classes of the Baranyai partitions we construct
are classified into five {types},
depending upon which groups take part in the parallel class.
In Section\,\ref{sec:const_par}, we present constructions for each
of the five types of parallel classes 
introduced in Section\,\ref{sec:general}.
For three of the five types, the construction depends on whether $t$
is even or odd. For one type, the construction depends on the value
of $t$ modulo $4$. Finally, for one of the types we will restrict ourselves,
in this section, to values of $t$ that are divisible by $3$.
In Section\,\ref{sec:main}, we summarize the results from
Section\,\ref{sec:const_par} to present a quadrupling construction
of $\mathrm{BP}(4t,4)$ for
$t \equiv 0,3,4,6,8,9 \!\pmod{12}$.
The case $t \equiv 9 \!\pmod{12}$
is more involved and requires further~modification
of the constructions given in Section\,\ref{sec:general}.
Section\,\ref{sec:efficient} is devoted to complexity analysis of
several problems associated with Baranyai partitions and constructions
thereof. As an example of our methods,~we present in Section\,\ref{sec:efficient}
an efficient enumerative-coding algorithm for constructing
$\mathrm{BP}(2^n,4)$ for all $n \geq 2$.\linebreak
We conclude with a brief discussion in Section\,\ref{sec:conclude}.

\vspace{1.80ex}\noindent
{\bf Note on terminology.}
For the sake of
brevity, in what follows, whenever
we say \emph{there exists~a~$\mathrm{BP}(n,k)$} we mean that there also exists
an efficient construction for it with complexity $O\bigl(\binom{n}{k}\bigr)$.

\section{Background on Combinatorial Designs}
\label{sec:designs}

Our constructions will make use of some well-known combinatorial designs which
will be described in the following subsections.

\subsection{Steiner Systems, Large Sets, and Resolvable Designs}
\label{sec:Steiner}

A \emph{Steiner system} $\mathrm{S}(t,k,v)$ is a pair $(Q,\B)$, where $Q$ is a $v$-set of \emph{points}
and $\B$ is a set of $k$-subsets (called \emph{blocks}) of $Q$, such that each $t$-subset of $Q$
is contained in exactly one block of $\B$. The literature on Steiner systems is very rich (many book chapters and surveys,
see for example~\cite{CoMa06}). Our interest is only in Steiner systems $\mathrm{S}(k-1,k,v)$,
Steiner systems $\mathrm{S}(1,k,v)$, and Steiner systems $\mathrm{S}(k,k,v)$. Note, that $\mathrm{S}(k,k,v)$ is equivalent to the
complete $k$-uniform hypergraph on $v$ vertices.

A \emph{large set} of Steiner systems $\mathrm{S}(t,k,v)$ is a partition of all the $\binom{v}{k}$ $k$-subsets
of $Q$ into Steiner systems $\mathrm{S}(t,k,v)$. In our context, a $\mathrm{BP}(n,k)$ is a large set of Steiner
systems $\mathrm{S}(1,k,v)$. Large sets were extensively studied during the years. For example, a large set of
$\mathrm{S}(2,3,v)$ is known for each $v \equiv 1$ or $3~(\text{mod}~6)$~\cite{Ji05,Lu83,Lu84,Tei91}. No large set of
$\mathrm{S}(3,4,v)$ is known, but many pairwise disjoint sets of $\mathrm{S}(3,4,v)$ were found, e.g.~\cite{EtHa91,EtZh20}.

A Steiner system $\mathrm{S}(k-1,k,v)$ is \emph{resolvable} if its block set $\B$ can be partitioned into
subsets $\cB_1,\cB_2,\ldots$, such that each subset $\cB_i$ is a Steiner system $\mathrm{S}(1,k,v)$, i.e.
each $\cB_i$ is a subset of disjoint blocks whose union is the point set $Q$. It is important
to note that $\mathrm{BP}(n,k)$ is a resolvable Steiner system $\mathrm{S}(k,k,n)$.
Of special interest in our exposition are two types of resolvable Steiner systems.
A Steiner system $\mathrm{S}(3,4,v)$ is called a \emph{Steiner quadruple system} and it is denoted by $\mathrm{SQS}(v)$. Such a system exists
if and only if $v \equiv 2$ or $4~(\text{mod}~6)$~\cite{Han60}. It is known that there exists an efficient
construction for resolvable $\mathrm{SQS}(v)$
if and only if $v \equiv 4$ or $8~(\text{mod}~12)$~\cite{Har87,LiZh05}.

The number of parallel classes in a $\mathrm{BP}(n,k)$ will be used in our exposition to prove the
correctness of the constructions which follow.
The total number of $k$-subsets of an $n$-set is $\binom{n}{k}$ and a parallel class contains $\frac{n}{k}$
$k$-subsets. This implies the following theorem.

\begin{theorem}
\label{thm:num_classes}
The number of parallel classes in a $\mathrm{BP}(n,k)$ is $\binom{n-1}{k-1}$.
\end{theorem}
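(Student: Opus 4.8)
The plan is to use a simple double-counting argument, dividing the total number of $k$-subsets by the number of $k$-subsets contained in a single parallel class. First I would recall that, by definition, a $\mathrm{BP}(n,k)$ is a one-factorization of the complete $k$-uniform hypergraph $G=(V,\binom{V}{k})$. This means that its parallel classes partition the entire edge set $\binom{V}{k}$, so summing the sizes of all the parallel classes must recover the total number of edges, namely $\binom{n}{k}$.

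Next I would count the blocks in a single parallel class. Since a parallel class is by definition a partition of the $n$-set $V$ into pairwise-disjoint $k$-subsets whose union is $V$, it must contain exactly $n/k$ blocks. Note that $k \mid n$ here, which is precisely the divisibility hypothesis that makes $\mathrm{BP}(n,k)$ well-defined in the first place (cf.\ Theorem\,\ref{Baranyai theorem}). Because \emph{every} parallel class has this same uniform size $n/k$, letting $N$ denote the number of parallel classes gives the equation $N \cdot (n/k) = \binom{n}{k}$.

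Solving for $N$ then yields $N = \frac{k}{n}\binom{n}{k}$, and the only remaining step is the routine binomial identity $\frac{k}{n}\binom{n}{k} = \binom{n-1}{k-1}$, which is immediate from the absorption identity $\binom{n}{k} = \frac{n}{k}\binom{n-1}{k-1}$. There is no real obstacle here: the whole argument is a one-line division, and the verification of the identity is elementary. The only point requiring any care is to justify that each parallel class genuinely has the uniform size $n/k$, but this follows directly from the defining property that a parallel class partitions $V$.
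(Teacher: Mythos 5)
Your proof is correct and is essentially the paper's own argument: the paper likewise divides the total count $\binom{n}{k}$ of $k$-subsets by the $\frac{n}{k}$ blocks per parallel class to obtain $\binom{n-1}{k-1}$. Your write-up merely makes explicit the absorption identity and the divisibility condition $k \mid n$, which the paper leaves implicit.
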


Finally, we will use a $\mathrm{BP}(n,2)$ which is a resolvable Steiner system $\mathrm{S}(2,2,n)$, also called
\emph{one-factorization of the complete graph $K_v$}. This structure is discussed in the next subsection.

\subsection{One-Factorization and Near-One-Factorization}
\label{sec:one_near}

A \emph{one-factorization} of the complete graph $K_m$, $m$ even, is a partition of the
edges of $K_m$ into perfect matchings. In other words, the set
$$
\cF = \{ \cF_0, \cF_1 , \ldots , \cF_{m-2} \}
$$
is a one-factorization of $K_m$ if each $\cF_i$, $0 \leq i \leq m-2$, is a perfect matching
(called a \emph{one-factor}), and the $\cF_i$'s are pairwise disjoint.
The $\frac{m}{2}$ pairs of $\cF_i$ are ordered arbitrarily,
$\cF_{i,0},\cF_{i,1},\ldots,\cF_{i,(m-2)/2}$.

If $m$ is odd, then there is no perfect matching in $K_m$ and we define a \emph{near-one-factorization}
$$
\cF = \{ \cF_0, \cF_1 , \ldots , \cF_{m-1} \}
$$
to be a partition of the edges in $K_m$ into sets of $\frac{m-1}{2}$ pairwise disjoint edges,
where each $\cF_i$ has one isolated vertex. Each $\cF_i$ is called a \emph{near-one-factor}.
In the sequel we assume that the set of vertices of $K_m$ is $\Z_m$ and in the near-one-factor $\cF_i$, $0 \leq i \leq  m-1$,
the vertex $i$ is isolated. The $\frac{m-1}{2}$ pairs of $\cF_i$ are ordered arbitrarily,
$\cF_{i,0},\cF_{i,1},\ldots,\cF_{i,(m-3)/2}$.

As was noted before, there has been done extensive study on
one-factorizations and near-one-factor\-iza\-tion
and an excellent book for this is~\cite{Wal97}.

\begin{example}
\label{ex:factors}
For $m=3$ there are three near-one-factors in a near-one-factorization $\cF = \{ \cF_0 ,\cF_1 , \cF_2 \}$,
where $\cF_0 = \{ \{ 1,2 \} \}$, $\cF_1 = \{ \{ 0,2 \} \}$, and $\cF_2 = \{ \{ 0,1 \} \}$.
For $m=4$, this yields a one-factorization $\cF' = \{ \cF'_0 ,\cF'_1 , \cF'_2 \}$,
where $\cF'_0 = \{ \{ 1,2 \} , \{ 0,3 \} \}$, $\cF'_1 = \{ \{ 0,2 \}, \{ 1,3 \} \}$, and $\cF'_2 = \{ \{ 0,1 \} , \{ 2,3 \} \}$.
\end{example}

The examples for $m=3$ and $m=4$ are special cases of a general construction.
Let $m > 1$ be a positive odd integer. We define the near-one-factor $\cF_k$, $0 \leq k \leq m-1$, as follows.
$$
\cF_k \triangleq \{ \{x,y\} ~:~ x,y \in \Z_m,~ x \neq y, ~ x+y \equiv 2k ~(\text{mod}~m)    \}~.
$$
The one-factorization $\cF'$ of $K_{m+1}$ is defined from the near-one-factorization $\cF$ of $K_m$
as follows
$$
\cF'_k \triangleq \cF_k \cup \{ \{ k,m \} \}.
$$

\subsection{Latin Squares}
\label{sec:Latin}

All the designs which were previously discussed consist of blocks and hence they are
also called \emph{block designs}. The last combinatorial design which will be used in
our constructions is not a block design, but it is heavily used in constructions
of various types of block designs including Steiner systems.

A \emph{Latin square} of order $n$ is an $n \times n$ matrix, say $M$, with entries from
a given $n$-set $Q$ such that each row and each column of $M$ is a permutation of the
elements of $Q$. An important result in our discussion, which can be proved for example by using
the well-known Hall's marriage Theorem~\cite{Hal35} is given next.

\begin{theorem}
\label{thm:completLS}
If a $k \times n$ matrix $\hat{M}$ has rows which are permutations of an $n$-set $Q$ and columns
with distinct elements of $Q$, then $\hat{M}$ can be completed to an $n \times n$ Latin square.
\end{theorem}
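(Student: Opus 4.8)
The plan is to build the Latin square one row at a time, extending the given $k \times n$ array $\hat{M}$ downward until it has $n$ rows, at each step invoking Hall's marriage theorem to produce a legal new row. It therefore suffices to prove the single-step claim: any $r \times n$ array whose rows are permutations of $Q$ and whose columns have distinct entries, with $r < n$, can be extended by one more row to an $(r+1) \times n$ array of the same kind. Iterating this from $r = k$ up to $r = n-1$ then yields the desired $n \times n$ Latin square, with the final row forced and automatically legal.

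To carry out one step, I would phrase the search for a new row as the problem of finding a \emph{system of distinct representatives} (SDR). For each column $j$ (with $1 \le j \le n$), let $A_j \subseteq Q$ be the set of symbols that do \emph{not} yet occur in column $j$. Since that column currently holds $r$ distinct symbols, $\abs{A_j} = n - r$. A valid new row is precisely a choice of one symbol $x_j \in A_j$ for every column $j$ with the $x_j$ pairwise distinct --- equivalently, an SDR for the family $\{A_1, \dots, A_n\}$. By Hall's marriage theorem, such an SDR exists if and only if $\abs{\bigcup_{j \in S} A_j} \ge \abs{S}$ for every $S \subseteq \{1,\dots,n\}$.

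The heart of the argument is verifying Hall's condition, and here the regularity of the underlying incidence structure does the work. Because each row is a permutation of $Q$ and each column has distinct entries, every symbol $q \in Q$ occupies exactly $r$ cells of the array, and these lie in $r$ \emph{distinct} columns (a repeat within one column would violate column-distinctness); hence $q$ is \emph{missing} from exactly $n - r$ columns. Viewing the relation ``$q \in A_j$'' as a bipartite graph between columns and symbols, every column vertex and every symbol vertex has degree $n - r$. For any set $S$ of columns, the $(n-r)\abs{S}$ edges incident to $S$ all terminate in $N(S) = \bigcup_{j \in S} A_j$, and each symbol there absorbs at most $n - r$ of them, so $(n-r)\abs{S} \le (n-r)\abs{N(S)}$; since $r < n$ we may cancel $n - r$ to obtain $\abs{N(S)} \ge \abs{S}$. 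Thus Hall's condition holds, the SDR exists, and the new row can be appended.

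The main obstacle is precisely this counting step --- confirming that the ``missing'' relation is genuinely $(n-r)$-regular, which is where \emph{both} hypotheses (rows being permutations and columns having distinct entries) are essential. Once regularity is in hand, Hall's theorem supplies each new row and the induction on the number of rows is routine.
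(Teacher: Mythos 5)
Your proof is correct and follows exactly the route the paper intends: the paper gives no detailed argument for this theorem, stating only that it ``can be proved for example by using the well-known Hall's marriage Theorem,'' and your write-up is precisely that classical Latin-rectangle completion argument (the $(n-r)$-regular bipartite ``missing symbol'' structure, double counting to verify Hall's condition, and row-by-row induction). Nothing is missing; in particular you correctly use both hypotheses to get the regularity, and the appended row is automatically a permutation of $Q$ since an SDR yields $n$ pairwise distinct symbols.
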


\section{A Doubling Construction for $t \equiv 4$ or $8~(\text{mod}~12)$}
\label{sec:doubling}

Let $n=2t$, where $t \equiv 4$ or $8~(\text{mod}~12)$, and let $\dS$ be a $\mathrm{BP}(t,4)$
on the point set $\Z_t$. Furthermore, let $\dT$ be a
resolvable $\mathrm{SQS}(t)$ and let $\cF=\{ \cF_0,\cF_1,\ldots,\cF_{t-2} \}$ be a one-factorization of $K_t$ on
the point set $\Z_t$. We form a $\mathrm{BP}(n,4)$ $\dP$ on the point set $\Z_t \times \Z_2$.
The parallel classes of $\dP$ will be of three types, Type~$\cS$, Type~$\cT$,
and Type $\cF$. A quadruple $\{ (x_1,i_1),(x_2,i_2),(x_3,i_3),(x_4,i_4)\}$ is
in a parallel class of Type~$\cS$ if $|\{x_1,x_2,x_3,x_4\}|=4$, in a parallel class of Type~$\cT$
if $|\{x_1,x_2,x_3,x_4\}|=3$, and in a parallel class of Type $\cF$ if $|\{x_1,x_2,x_3,x_4\}|=2$.
Each type does not have any quadruple with a different structure.

\subsection{Parallel Classes of Type $\cS$}
\label{sec:cS}

For a given parallel class $\cR$ of $\dS$ form eight sets $\cR_i$, $i \in \Z_8$, in $\dP$.
For each $i\in \Z_8$, given a block $\{ x_1,x_2,x_3,x_4 \} \in \cR$ we form the following two blocks in $\cR_i$.

$$
\{ (x_1,0),(x_2,j_2),(x_3,j_3),(x_4,j_4) \},~ \{ (x_1,1),(x_2,j_2 +1 ),(x_3,j_3+1),(x_4,j_4+1) \},
$$
where $(j_2 j_3 j_4)$ is the binary representation of $i$.

The following result can be easily verified.
\begin{lemma}
\label{lem:classS}
Given a parallel class $\cR$ of $\dS$, each set $\cR_i$, $i \in \Z_8$, is a parallel class,
of Type $\cS$, on the point set $\Z_t \times \Z_2$.
Each two parallel classes $\cR_i$ and $\cR'_j$, $0 \leq i,j \leq 7$, where $\cR \neq \cR'$ or $i \neq j$, are disjoint.
The number of parallel classes of Type $\cS$ in $\dP$ is $8 \binom{t-1}{3}$.
\end{lemma}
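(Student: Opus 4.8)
The plan is to verify the three assertions in turn: first that each $\cR_i$ is a genuine parallel class consisting only of Type~$\cS$ quadruples, then that distinct pairs $(\cR,i)$ yield disjoint parallel classes, and finally to count. For the first claim, I would fix $i \in \Z_8$ with binary representation $(j_2 j_3 j_4)$ and examine the two blocks produced from each $\{x_1,x_2,x_3,x_4\} \in \cR$. Since $\cR$ is a parallel class of $\dS$, its blocks partition $\Z_t$; thus as $\{x_1,x_2,x_3,x_4\}$ ranges over $\cR$, each first coordinate in $\Z_t$ appears exactly once among the four points of exactly one block. The key observation is that the two blocks we form from a single $\cR$-block cover precisely the eight pairs $(x_1,0),(x_1,1),(x_2,j_2),(x_2,j_2{+}1),\ldots$, which is exactly $\{x_1,x_2,x_3,x_4\} \times \Z_2$. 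Hence the union over all blocks of $\cR$ covers $\Z_t \times \Z_2$, and because the $x$-coordinates are disjoint across $\cR$-blocks, no point of $\Z_t \times \Z_2$ is covered twice. So $\cR_i$ partitions the vertex set, i.e.\ it is a parallel class. That each quadruple has $|\{x_1,x_2,x_3,x_4\}| = 4$ is immediate since the $x_i$ come from a block of $\dS$, which is an honest $4$-subset of $\Z_t$; this certifies Type~$\cS$.

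For the disjointness claim, suppose a quadruple $B = \{(x_1,i_1),\ldots,(x_4,i_4)\}$ lies in both $\cR_i$ and $\cR'_j$. Its set of first coordinates is a $4$-subset of $\Z_t$, and this $4$-subset must be a block of $\cR$ (forced by how $\cR_i$ is built) and also a block of $\cR'$. Since $\dS$ is a $\mathrm{BP}(t,4)$, hence a large set, this common $4$-subset belongs to a unique parallel class of $\dS$, forcing $\cR = \cR'$. Assuming $\cR = \cR'$, I then argue $i = j$: within a fixed block $\{x_1,x_2,x_3,x_4\}$ the second coordinates of the two generated blocks are determined by $(j_2 j_3 j_4)$ via the rule "$x_1$-coordinate is $0$ or $1$, and $x_\ell$-coordinate is $j_\ell$ or $j_\ell{+}1$ accordingly," so recovering the second coordinate attached to, say, $x_1$ pins down which of the two blocks we are in, and then the second coordinates attached to $x_2,x_3,x_4$ recover $j_2,j_3,j_4$ and hence $i$. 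Thus $(\cR,i) = (\cR',j)$, giving disjointness.

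The counting step is the easiest: $\dS$ is a $\mathrm{BP}(t,4)$, so by Theorem~\ref{thm:num_classes} it has $\binom{t-1}{3}$ parallel classes $\cR$, and each spawns $8$ parallel classes $\cR_i$, for a total of $8\binom{t-1}{3}$ parallel classes of Type~$\cS$ in $\dP$; disjointness guarantees these are all distinct. The main obstacle, such as it is, lies in the middle step: one must be careful that the map $(\cR,i) \mapsto \cR_i$ is injective, which hinges precisely on the large-set property of $\dS$ (so that a $4$-subset of $\Z_t$ determines its parallel class) together with the bijective recovery of $i$ from the second coordinates. Everything else is routine bookkeeping, which is presumably why the authors call the result "easily verified."
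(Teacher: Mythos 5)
Your proposal is correct and takes essentially the same route as the paper's proof: you verify that each $\cR_i$ partitions $\Z_t \times \Z_2$ block by block, invoke the pairwise disjointness of the parallel classes of $\dS$ (the large-set property) to force $\cR = \cR'$, and recover $i = j$ from the second coordinates exactly as the paper does via the value of $\ell_1$ determining whether $\ell_2\ell_3\ell_4$ or $(\ell_2{+}1)(\ell_3{+}1)(\ell_4{+}1)$ is the binary representation of both $i$ and $j$. The count $8\binom{t-1}{3}$ via Theorem~\ref{thm:num_classes} also matches the paper, with your added remark that disjointness ensures the classes are distinct being a welcome (if minor) clarification.
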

\begin{proof}
Since $\cR$ is a parallel class of $\dS$ on the point set $\Z_t$,
it follows that $\cR_i$ forms a parallel class on the point set $\Z_t \times \Z_2$ for each $i \in \Z_8$.
If $B=\{(x_1,\ell_1),(x_2,\ell_2),(x_3,\ell_3),(x_4,\ell_4)\}$ is a block of a defined parallel class $\cR_i$, then $\{x_1,x_2,x_3,x_4\}$ is
a block of the parallel class $\cR$. Hence, $|\{x_1,x_2,x_3,x_4\}|=4$ which implies that $B$ is a block in a parallel class of Type $\cS$.

Suppose that $\cR_i$ and $\cR'_j$ are two parallel classes which are not disjoint and let\linebreak
$B=\{(x_1,\ell_1),(x_2,\ell_2),(x_3,\ell_3),(x_4,\ell_4)\}\in  \cR_i\cap \cR'_j$.
By the definition of $\cR_i$ and $\cR'_j$, we have that $\{x_1,x_2,x_3,x_4\}\in \cR \cap \cR'$.
Since both $\cR$ and $\cR'$ are parallel classes of $\dS$, in which each two different parallel
classes are disjoint, it follows that $\cR = \cR'$.
Moreover, the value of $\ell_1$ implies that either $\ell_2 \ell_3 \ell_4$
or $(\ell_2+1) (\ell_3+1) (\ell_4+1)$ is the binary representation of $i$ and $j$ which
implies that $i=j$, that is $\cR_i = \cR'_j$, a contradiction. Thus, $\cR_i \cap \cR'_j = \varnothing$
if $\cR \neq \cR'$ or $i \neq j$.

By Theorem \ref{thm:num_classes}, the number of parallel classes in $\dS$ is $\binom{t-1}{3}$.
Each parallel class of $\dS$ induces eight parallel classes of Type $\cS$ in $\dP$.
Hence, the number of parallel classes of Type $\cS$ in $\dP$ is $8 \binom{t-1}{3}$.
\end{proof}

\begin{example}
\label{ex:doub8S}
For $t=8$ there are $35$ parallel classes in a unique $\mathrm{BP}(8,4)$, where a parallel class has two
quadruples of the form $\{ X, \Z_8 \setminus X \}$, where $X$ is a quadruple of $\Z_8$.
For the parallel class ${\cR = \{ \{0,1,2,3\},\{4,5,6,7\} \}}$ form the eight
parallel classes $\cR_0, \cR_1, \cR_2, \cR_3, \cR_4, \cR_5, \cR_6, \cR_7$ for $\dP$ as follows.

\begin{footnotesize}
$$
\cR_0 =\{\{(0,0),(1,0),(2,0),(3,0)\},\{(0,1),(1,1),(2,1),(3,1)\},\{(4,0),(5,0),(6,0),(7,0)\},\{(4,1),(5,1),(6,1),(7,1)\}\},
$$
$$
\cR_1 =\{\{(0,0),(1,0),(2,0),(3,1)\},\{(0,1),(1,1),(2,1),(3,0)\},\{(4,0),(5,0),(6,0),(7,1)\},\{(4,1),(5,1),(6,1),(7,0)\}\},
$$
$$
\cR_2 =\{\{(0,0),(1,0),(2,1),(3,0)\},\{(0,1),(1,1),(2,0),(3,1)\},\{(4,0),(5,0),(6,1),(7,0)\},\{(4,1),(5,1),(6,0),(7,1)\}\},
$$
$$
\cR_3 =\{\{(0,0),(1,0),(2,1),(3,1)\},\{(0,1),(1,1),(2,0),(3,0)\},\{(4,0),(5,0),(6,1),(7,1)\},\{(4,1),(5,1),(6,0),(7,0)\}\},
$$
$$
\cR_4 =\{\{(0,0),(1,1),(2,0),(3,0)\},\{(0,1),(1,0),(2,1),(3,1)\},\{(4,0),(5,1),(6,0),(7,0)\},\{(4,1),(5,0),(6,1),(7,1)\}\},
$$
$$
\cR_5 =\{\{(0,0),(1,1),(2,0),(3,1)\},\{(0,1),(1,0),(2,1),(3,0)\},\{(4,0),(5,1),(6,0),(7,1)\},\{(4,1),(5,0),(6,1),(7,0)\}\},
$$
$$
\cR_6 =\{\{(0,0),(1,1),(2,1),(3,0)\},\{(0,1),(1,0),(2,0),(3,1)\},\{(4,0),(5,1),(6,1),(7,0)\},\{(4,1),(5,0),(6,0),(7,1)\}\},
$$
$$
\cR_7 =\{\{(0,0),(1,1),(2,1),(3,1)\},\{(0,1),(1,0),(2,0),(3,0)\},\{(4,0),(5,1),(6,1),(7,1)\},\{(4,1),(5,0),(6,0),(7,0)\}\}.
$$
\end{footnotesize}
\end{example}

\subsection{Parallel Classes of Type $\cT$}
\label{sec:cT}

For a given parallel class $\cR$ of $\dT$ form 24 sets
$\cR_{i,j,k}$, $i \in \Z_6$,  $j,k \in \Z_2$, in $\dP$.
Given a block $\{ x_1,x_2,x_3,x_4 \} \in \cR$ we form the following two blocks in $\cR_{0,j,k}$:
$$
\{ (x_1,0),(x_1,1),(x_2,j),(x_3,k) \},~\{ (x_2,j+1),(x_3,k+1),(x_4,0),(x_4,1) \}.
$$
The following two blocks are constructed in $\cR_{1,j,k}$, $j,k \in \Z_2$:
$$
\{ (x_1,0),(x_1,1),(x_2,j),(x_4,k) \},~\{ (x_2,j+1),(x_3,0),(x_3,1),(x_4,k+1) \}.
$$
The following two blocks are constructed in $\cR_{2,j,k}$, $j,k \in \Z_2$:
$$
\{ (x_1,0),(x_1,1),(x_3,j),(x_4,k) \},~\{ (x_2,0),(x_2,1),(x_3,j+1),(x_4,k+1) \}.
$$
The following two blocks are constructed in $\cR_{3,j,k}$, $j,k \in \Z_2$:
$$
\{ (x_1,j),(x_2,0),(x_2,1),(x_3,k) \},~\{ (x_1,j+1),(x_3,k+1),(x_4,0),(x_4,1) \}.
$$
The following two blocks are constructed in $\cR_{4,j,k}$, $j,k \in \Z_2$:
$$
\{ (x_1,j),(x_2,0),(x_2,1),(x_4,k) \},~\{ (x_1,j+1),(x_3,0),(x_3,1),(x_4,k+1) \}.
$$
The following two blocks are constructed in $\cR_{5,j,k}$, $j,k \in \Z_2$:
$$
\{ (x_1,j),(x_2,k),(x_3,0),(x_3,1) \},~\{ (x_1,j+1),(x_2,k+1),(x_4,0),(x_4,1) \}.
$$

The following result can be easily verified.
\begin{lemma}
\label{lem:classT}
Given a parallel class $\cR$ of $\dT$, each set $\cR_{i,j,k}$, $i \in \Z_6$, $j,k \in \Z_2$,
is a parallel class, of Type $\cT$, on the point set $\Z_t \times \Z_2$.
Each two parallel classes $\cR_{i,j,k}$ and $\cR'_{m,r,s}$, $i,m \in Z_6$, $j,k,r,s \in \Z_2$,
where $\cR \neq \cR'$ or $(i,j,k) \neq (m,r,s)$, are disjoint.
The number of parallel classes of Type $\cT$ in $\dP$ is $4(t-1)(t-2)$.
\end{lemma}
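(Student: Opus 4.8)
The plan is to follow the same three-part template as the proof of Lemma~\ref{lem:classS}: (i) show each $\cR_{i,j,k}$ is a parallel class all of whose blocks are of Type~$\cT$; (ii) prove pairwise disjointness across distinct indices $(\cR,i,j,k)$; and (iii) count. Throughout I fix, once and for all, an ordering of the four points of every block of $\dT$, since the construction treats $x_1,x_2,x_3,x_4$ asymmetrically.

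For part (i), I would fix a quadruple $\{x_1,x_2,x_3,x_4\}\in\cR$ and check, for each of the six cases $i\in\Z_6$, that the two associated blocks use only the eight points of $\{x_1,x_2,x_3,x_4\}\times\Z_2$ and contain each of them exactly once; for instance, when $i=0$ the lifts $(x_2,j)$ and $(x_2,j+1)$ appear in the first and second block respectively, and similarly for the other points. In every block exactly one first coordinate is ``doubled'' (both its lifts appear) while the other two occur once, so the block has three distinct first coordinates and is of Type~$\cT$. As $\cR$ partitions $\Z_t$ into such quadruples with pairwise disjoint supports, taking the union over all quadruples of $\cR$ yields a partition of $\Z_t\times\Z_2$, so $\cR_{i,j,k}$ is a parallel class of Type~$\cT$.

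The crux is part (ii). Suppose a block $B=\{(a,0),(a,1),(b,\ell),(c,\ell')\}$, with $a$ its doubled coordinate and $b,c$ its singletons, lies in $\cR_{i,j,k}\cap\cR'_{m,r,s}$. Its support $\{a,b,c\}$ is a $3$-subset of $\Z_t$, and since $\dT$ is an $\mathrm{SQS}(t)=\mathrm{S}(3,4,t)$ this $3$-subset lies in a unique block of $\dT$; hence the quadruple generating $B$ inside $\cR_{i,j,k}$ and the one generating $B$ inside $\cR'_{m,r,s}$ are the very same block $\{x_1,x_2,x_3,x_4\}$. Because $\dT$ is resolvable, this block belongs to only one parallel class of $\dT$, forcing $\cR=\cR'$. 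To pin down $(i,j,k)$ I would set $d$ to be the remaining point $\{x_1,x_2,x_3,x_4\}\setminus\{a,b,c\}$; a glance at the six cases shows that the two blocks produced for a given $i$ double exactly the two coordinates of one of the six pairs, and that these doubled coordinates are precisely $a$ and $d$. Thus the positions occupied by $a$ and $d$ in the ordered quadruple determine $i$, and knowing which of $a,d$ is doubled in $B$ identifies which of the two blocks $B$ is; the second coordinates of the two singletons then recover $j$ and $k$ (shifted by $1$ when $B$ is the second block). Hence $(i,j,k)=(m,r,s)$.

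Finally, for part (iii), a resolvable $\mathrm{SQS}(t)$ has $\binom{t}{3}/4$ blocks grouped into parallel classes of $t/4$ blocks, hence $\frac{(t-1)(t-2)}{6}$ parallel classes; each contributes the $6\cdot2\cdot2=24$ distinct parallel classes $\cR_{i,j,k}$ (distinct by part (ii)), so the total is $24\cdot\frac{(t-1)(t-2)}{6}=4(t-1)(t-2)$. I expect the main obstacle to be part (ii): one must verify that a single Type~$\cT$ block retains enough information to reconstruct both its generating quadruple---via the $\mathrm{S}(3,4,t)$ property and resolvability---and the entire index $(i,j,k)$, which hinges on the case analysis showing that the doubled-coordinate pair fixes $i$ while the singleton labels fix $(j,k)$ unambiguously.
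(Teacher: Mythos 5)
Your proposal is correct and follows essentially the same route as the paper's proof: the same three-part template (parallel-class verification, disjointness via the $\mathrm{S}(3,4,t)$ property and resolvability of $\dT$, and the count $24\cdot\frac{(t-1)(t-2)}{6}=4(t-1)(t-2)$). Your part (ii) merely fills in what the paper dismisses as ``easily verified''---your observation that the six values of $i\in\Z_6$ correspond exactly to the six possible position-pairs of the doubled points, so that the positions of $a$ and $d$ recover $i$ and the singleton labels recover $(j,k)$, is precisely the invariant underlying the paper's claim that $i=m$ is forced.
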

\begin{proof}
It is easy to verify that since $\cR$ is a parallel class of $\dT$ on the point set $\Z_t$,
it follows that $\cR_{i,j,k}$ forms a parallel class on the point set $\Z_t \times \Z_2$
for each $i \in \Z_6$, $j,k \in \Z_2$.
Moreover, if $B=\{(x_1,\ell_1),(x_2,\ell_2),(x_3,\ell_3),(x_4,\ell_4)\}$ is a block of a defined parallel class,
one can easily verified that $|\{x_1,x_2,x_3,x_4\}|=3$
which implies that $B$ is a block in a parallel class of Type $\cT$.

Suppose that $\cR_{i,j,k}$ and $\cR'_{m,r,s}$ are two parallel classes which are not disjoint
and\linebreak $B=\{(x_1,\ell_1),(x_2,\ell_2),(x_3,\ell_3),(x_4,\ell_4)\}\in  \cR_{i,j,k} \cap \cR'_{m,r,s}$.
Since $|\{x_1,x_2,x_3,x_4\}|=3$ and $\dT$ is an $\mathrm{SQS}(t)$,
one can easily verify from the construction that $B \in  \cR_{i,j,k} \cap \cR'_{m,r,s}$ can only happen if $i=m$.
Assume, for example, that $i=0$, $\{y_1,y_2,y_3,y_4\} \in \dT$, and
$B=\{(y_1,0),(y_1,1),(y_2,\alpha),(y_3,\beta)\}\in  \cR_{0,j,k} \cap \cR'_{0,r,s}$.
It implies that $(\alpha,\beta)=(j,k)=(r,s)$. Moreover, it implies that $\{y_1,y_2,y_3,y_4\} \in \cR \cap \cR'$
and hence $\cR=\cR'$, that is $\cR_{i,j,k} = \cR'_{m,r,s}$, a contradiction.
Thus, $\cR_{i,j,k} \cap \cR'_{m,r,s} = \varnothing$ if $\cR \neq \cR'$ or $(i,j,k) \neq (m,r,s)$.

Simple counting arguments show that the number of parallel classes in an $\mathrm{SQS}(t)$ is $\frac{(t-1)(t-2)}{6}$.
Each parallel class of $\dT$ in an $\mathrm{SQS}(t)$ induces 24 parallel classes of Type $\cT$ in $\dP$.
Hence, the number of parallel classes of Type $\cT$ in $\dP$ is $4(t-1)(t-2)$.
\end{proof}

\subsection{Parallel Classes of Type $\cF$}
\label{sec:cF}

For each one-factor $\cF_i$, $0 \leq i \leq t-2$,
form the following set $\hat{\cR}_i$ in $\dP$.

$$
\hat{\cR}_i \triangleq \{ \{ (x,0),(y,0),(x,1),(y,1) \} ~:~ \{x,y\} \in \cF_i \}
$$

The following result can be easily verified.
\begin{lemma}
\label{lem:classF}
For each one-factor $\cF_i$, $0 \leq i \leq t-2$, the set $\hat{\cR}_i$ is a parallel class
of size $t/2$ on the point set $\Z_t \times \Z_2$.
Each two parallel classes $\hat{\cR}_i$ and $\hat{\cR}_j$, $0 \leq i < j \leq t-2$, are disjoint.
Each defined block $\{ (x_1,i_1),(x_2,i_2),(x_3,i_3),(x_4,i_4)\}$ of a
parallel class $\hat{\cR}$ of Type $\cF$ satisfies ${|\{x_1,x_2,x_3,x_4\}|=2}$.
The number of parallel classes of Type $\cF$ in $\dP$ is $t-1$.
\end{lemma}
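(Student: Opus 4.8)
The plan is to verify the four assertions in turn, following the same pattern used in the proofs of Lemma~\ref{lem:classS} and Lemma~\ref{lem:classT}. Since $t \equiv 4$ or $8 \pmod{12}$ we have $4 \mid t$, so in particular $t$ is even and $K_t$ genuinely admits a one-factorization $\cF$; each one-factor $\cF_i$ is then a perfect matching of $\Z_t$ consisting of exactly $t/2$ edges that together partition $\Z_t$.

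First I would show that each $\hat{\cR}_i$ is a parallel class of size $t/2$. For every edge $\{x,y\} \in \cF_i$ the construction produces exactly one block $\{(x,0),(y,0),(x,1),(y,1)\}$, so $|\hat{\cR}_i|$ equals the number of edges of $\cF_i$, namely $t/2$. To see that $\hat{\cR}_i$ partitions $\Z_t \times \Z_2$, observe that each point $x \in \Z_t$ lies in exactly one edge of $\cF_i$; hence both lifts $(x,0)$ and $(x,1)$ occur in exactly one block, and distinct edges of $\cF_i$ yield vertex-disjoint blocks. The third assertion, that every Type $\cF$ block satisfies $|\{x_1,x_2,x_3,x_4\}|=2$, is then immediate from the explicit form of the blocks, whose first coordinates are $x,y,x,y$ with $x \neq y$.

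For the disjointness of $\hat{\cR}_i$ and $\hat{\cR}_j$ when $i \neq j$, the key observation is that the map $\{x,y\} \mapsto \{(x,0),(y,0),(x,1),(y,1)\}$ is injective: a block of Type $\cF$ determines the unordered pair of its two distinct first coordinates, and hence determines the edge of $K_t$ from which it was built. Thus if some block lay in both $\hat{\cR}_i$ and $\hat{\cR}_j$, the underlying edge $\{x,y\}$ would belong simultaneously to $\cF_i$ and $\cF_j$; but the one-factors of a one-factorization are pairwise edge-disjoint, which forces $i = j$, a contradiction.

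Finally, the count is just the number of one-factors in a one-factorization of $K_t$: since $K_t$ has $\binom{t}{2}$ edges and each $\cF_i$ contains $t/2$ of them, there are $\bigl(t(t-1)/2\bigr)/(t/2) = t-1$ one-factors, and each contributes exactly one parallel class $\hat{\cR}_i$. I do not anticipate a genuine obstacle here. Unlike Types $\cS$ and $\cT$, the Type $\cF$ blocks use only the single ``doubled-edge'' pattern, so no case analysis over binary shifts is required; the closest thing to a delicate point is the injectivity argument above for cross-class disjointness, and even that reduces directly to the elementary properties of one-factorizations recalled in Section~\ref{sec:one_near}.
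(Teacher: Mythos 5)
Your proof is correct and is exactly the routine verification the paper has in mind: the paper states Lemma~\ref{lem:classF} without proof (``can be easily verified''), and your argument---counting the $t/2$ edges of each perfect matching $\cF_i$, lifting each point $x$ to $(x,0)$ and $(x,1)$, recovering the edge $\{x,y\}$ from a block to get cross-class disjointness from edge-disjointness of the one-factors, and counting the $t-1$ one-factors of $K_t$---fills in precisely those easy steps with no gaps.
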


\subsection{Conclusion for $t \equiv 4$ or $8~(\text{mod}~12)$}

The constructions of the blocks of Type $\cS$, Type $\cT$, and Type $\cF$ imply the following theorem.
\begin{theorem}
\label{thm:4_8m12}
If there exists a $\mathrm{BP}(t,4)$, where  $t \equiv 4$ or $8~(\text{mod}~12)$, then there exists a $\mathrm{BP}(2t,4)$.
\end{theorem}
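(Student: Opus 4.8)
The plan is to show that the three families of parallel classes built in Sections~\ref{sec:cS}--\ref{sec:cF} together constitute a one-factorization of the complete $4$-uniform hypergraph on $\Z_t\times\Z_2$, which is exactly a $\mathrm{BP}(2t,4)$. First I would confirm that all the required ingredients are available: the hypothesis supplies $\dS$, a $\mathrm{BP}(t,4)$; because $t\equiv 4$ or $8~(\text{mod}~12)$, an efficient resolvable $\mathrm{SQS}(t)$ $\dT$ exists by the result quoted in Section~\ref{sec:Steiner}; and since $t$ is even, a one-factorization $\cF$ of $K_t$ exists. Hence the Type~$\cS$, Type~$\cT$, and Type~$\cF$ constructions are all well defined.

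Next I would observe that the three types are mutually exclusive and jointly exhaustive. Any quadruple $\{(x_1,i_1),(x_2,i_2),(x_3,i_3),(x_4,i_4)\}$ of $\Z_t\times\Z_2$ satisfies $|\{x_1,x_2,x_3,x_4\}|\in\{2,3,4\}$: the value $1$ is impossible, since a single first coordinate can be combined with at most two second coordinates in $\Z_2$. Thus each quadruple belongs to exactly one type, and in particular the block sets produced by the three types are pairwise disjoint.

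The heart of the argument is then a single counting check. By Lemmas~\ref{lem:classS}, \ref{lem:classT}, and~\ref{lem:classF}, within each type the parallel classes are pairwise disjoint and their numbers are $8\binom{t-1}{3}$, $4(t-1)(t-2)$, and $t-1$, respectively. Each parallel class partitions the $2t$ points into $\frac{t}{2}$ quadruples, so the blocks inside a single class are disjoint and hence distinct; combined with cross-class and cross-type disjointness, all emitted blocks are pairwise distinct as sets. It therefore suffices to verify that the total number of parallel classes equals $\binom{2t-1}{3}$, the value forced by Theorem~\ref{thm:num_classes}. A direct simplification gives
$$
8\binom{t-1}{3}+4(t-1)(t-2)+(t-1)
=\frac{(t-1)(2t-1)(2t-3)}{3}
=\binom{2t-1}{3},
$$
so we obtain exactly $\binom{2t-1}{3}$ pairwise-disjoint parallel classes, contributing $\binom{2t-1}{3}\cdot\frac{t}{2}=\binom{2t}{4}$ distinct quadruples. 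Since this matches the total number of quadruples of $\Z_t\times\Z_2$, every quadruple is covered exactly once, and the union of the three families is a one-factorization, i.e.\ a $\mathrm{BP}(2t,4)$.

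Finally, I would record efficiency: each of the three constructions is explicit and reads its blocks directly off $\dS$, $\dT$, and $\cF$ in time proportional to the number of blocks produced, so the overall procedure runs in $O\bigl(\binom{2t}{4}\bigr)$ time. I expect the only genuine obstacle to be the algebraic identity displayed above, which is routine; the conceptual point is that disjointness (already supplied by the three lemmas) together with the exact count forces full coverage, so no separate coverage argument is needed for the individual types.
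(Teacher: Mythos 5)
Your proposal is correct and follows essentially the same route as the paper: within-type disjointness from Lemmas~\ref{lem:classS}, \ref{lem:classT}, and~\ref{lem:classF}, cross-type disjointness from the $|\{x_1,x_2,x_3,x_4\}|\in\{4,3,2\}$ trichotomy, and the counting identity $8\binom{t-1}{3}+4(t-1)(t-2)+(t-1)=\binom{2t-1}{3}$ forcing full coverage via Theorem~\ref{thm:num_classes}. Your version is a bit more explicit than the paper's (verifying the availability of the resolvable $\mathrm{SQS}(t)$ and the one-factorization, and spelling out why type membership excludes $|\{x_1,x_2,x_3,x_4\}|=1$), but these are elaborations of the same argument, not a different one.
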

\begin{proof}
By Lemmas~\ref{lem:classS},~\ref{lem:classT}, and~\ref{lem:classF}, all the defined
parallel classes of Type $\cS$, Type $\cT$, and Type $\cF$, are pairwise disjoint and there are
$$
8 \binom{t-1}{3} + 4(t-1)(t-2) + (t-1) = \binom{2t-1}{3}
$$
such parallel classes. This implies by Theorem~\ref{thm:num_classes} that each quadruple of $\Z_t \times \Z_4$ is
contained in exactly one of the parallel classes.
\end{proof}

\section{Ingredients for the Quadrupling Construction}
\label{sec:general}

Let $n=4t$, where $t$ is a positive integer, $t+\delta_1$ is divisible
by 4, and $t+ \delta_2$ is divisible by~3, where $0 \leq \delta_1 \leq 3$, and $ 0 \leq \delta_2 \leq 2$.
Assume that $\dS$ is a $\mathrm{BP}(t+\delta_1,4)$ on the point set $\Z_t \cup \{ \Omega_1,\ldots,\Omega_{\delta_1} \}$.
Assume $\dT$ is a $\mathrm{BP}(t+\delta_2,3)$ on the point set $\Z_t \cup \{ \Omega_1,\ldots,\Omega_{\delta_2} \}$.
The constructed $\mathrm{BP}(4t,4)$, $\dP$, in the following sections is on the point set $\Z_t \times \Z_4$.
In a quadruple $\{ y_0 , y_1 ,y_2 , y_3 \} \in \dS$ and a triple $\{ x_0,x_1,x_2 \} \in \dT$,
where $y_0 , y_1 ,y_2 , y_3 , x_0,x_1,x_2 \in \Z_t$ we
assume that the elements are ordered in increasing order, i.e. $y_0<y_1<y_2<y_3$ and $x_0<x_1<x_2$.
The number of quadruples in $\Z_t \times \Z_4$ is $\binom{4t}{4}$, the number of quadruples in a parallel class
of $\dP$ is $t$, and the number of parallel classes in $\dP$ is $\binom{4t-1}{3}$.

\subsection{Configurations, Groups, and Types of Parallel Classes}
\label{sec:config}

Each quadruple $X$ of the point set $\Z_t \times \Z_4$ is written as
$X= \{ (x_0,i_0), (x_1,i_1),(x_2,i_2),(x_3,i_3) \}$. Let $j_0 = |X \cap \Z_t \times \{0\}|$,
$j_1 = |X \cap \Z_t \times \{1\}|$, $j_2 = |X \cap \Z_t \times \{2\}|$, and
$j_3 = |X \cap \Z_t \times \{3\}|$. We say that $X$ is a quadruple from
configuration $(j_0,j_1,j_2,j_3)$. There is a total of 35 possible configurations.
For $j_i$, $i \in \Z_4$, there is a configuration $(j_0,j_1,j_2,j_3)$ if $j_0+j_1+j_2+j_3=4$.
There are $\prod_{i=0}^3 \binom{t}{j_i}$ distinct quadruples in configuration $(j_0,j_1,j_2,j_3)$.
The 35 configurations are partitioned into five groups as follows:

\noindent
{\bf Group 1:} In this group there are four configurations $(4,0,0,0)$, $(0,4,0,0)$,
$(0,0,4,0)$, and $(0,0,0,4)$.

\noindent
{\bf Group 2:} In this group there are twelve configurations $(3,1,0,0)$, $(1,3,0,0)$, $(3,0,1,0)$, $(1,0,3,0)$, $(3,0,0,1)$,
$(1,0,0,3)$, $(0,3,1,0)$, $(0,1,3,0)$, $(0,3,0,1)$, $(0,1,0,3)$, $(0,0,3,1)$, and $(0,0,1,3)$.

\noindent
{\bf Group 3:} In this group there are six configurations $(2,2,0,0)$, $(2,0,2,0)$, $(2,0,0,2)$,
$(0,2,2,0)$, $(0,2,0,2)$, and $(0,0,2,2)$.

\noindent
{\bf Group 4:} In this group there are twelve configurations $(2,1,1,0)$, $(2,1,0,1)$, $(2,0,1,1)$, $(1,2,1,0)$, $(1,2,0,1)$,
$(0,2,1,1)$, $(1,1,2,0)$, $(1,0,2,1)$, $(0,1,2,1)$, $(1,1,0,2)$, $(1,0,1,2)$, and $(0,1,1,2)$.

\noindent
{\bf Group 5:} The only configuration contained in this group is $(1,1,1,1)$.

The construction which will be described in the sequel has five types of parallel classes.
\begin{itemize}
\item In Type 1, most of the quadruples in each parallel class are from Group 1.
There might be some quadruples from Group 2, Group 3, or Group 5.

\item In Type 2, if $t$ is divisible by 3 then all the quadruples in each parallel class are from Group~2.
If $t$ is not divisible by 3, then there might be some quadruples from Group 3 or Group 4.

\item In Type 3, if $t$ is even then all the quadruples in each parallel class are from Group 3.
If $t$ is odd, then $t-1$ of the quadruples in each parallel class are from Group 3 and
one quadruple in each parallel class is from Group 5, i.e. from configuration (1,1,1,1).

\item In Type 4, if $t$ is even then all the quadruples in each parallel class are from Group 4.
If $t$ is odd then $t-1$ of the quadruples in each parallel class are from Group 4 and
one quadruple in each parallel class is from Group 5, i.e. from configuration (1,1,1,1).

\item In Type 5, all the quadruples in each parallel class are from Group 5.
\end{itemize}

\section{Parallel Classes for the Quadrupling Construction}
\label{sec:const_par}

\subsection{Parallel Classes of Type 1}
\label{sec:T4}

Given any parallel class $\cR$ in $\dS$, construct a parallel class $\cR'$ in $\dP$.
For any given $B \in \cR$ we form a subset of quadruples in $\cR'$ which will be defined in the sequel.
We distinguish between the four different residues of $t$ modulo 4.

\subsubsection{Type 1 for $t$ divisible by 4}
\label{sec:T4t0}

The set $\dS$ is a $\mathrm{BP}(t,4)$ on $\Z_t$ and $\cR$ a parallel class in $\dS$.
If $B= \{ x_0,x_1,x_2, x_3 \} \in \cR$, then construct the following set with four quadruples in $\cR'$.
$$
B'_i \triangleq \{ \{ (x_0,i),(x_1,i),(x_2,i),(x_3,i) \} ~:~ i \in \Z_4 \}.
$$

\begin{lemma}
\label{lem:T4t0}
$~$
\begin{enumerate}
\item The subset $\cR'$ is a parallel class on the point set $\Z_t \times \Z_4$.

\item Each quadruple of Group 1 is contained in exactly one such parallel class.

\item The number of parallel classes of Type 1 when $t$ is divisible by 4 is $\binom{t-1}{3}$.
Each such parallel class contains $t$ quadruples from Group 1.
\end{enumerate}
\end{lemma}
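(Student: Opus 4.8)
The plan is to verify the three claims of Lemma~\ref{lem:T4t0} essentially by unwinding the definitions, since the construction here is the simplest of all five types: each block $B=\{x_0,x_1,x_2,x_3\}\in\cR$ is ``lifted'' to four parallel copies, one in each layer $\Z_t\times\{i\}$, $i\in\Z_4$. The key structural observation driving every part is that these lifted quadruples all lie in Group~1, i.e.\ in a configuration $(4,0,0,0)$, $(0,4,0,0)$, $(0,0,4,0)$, or $(0,0,0,4)$, and that a Group-1 quadruple is nothing more than a $4$-subset of $\Z_t$ together with a choice of a single layer.

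\medskip\noindent\textbf{Part 1 (parallel class).} First I would show $\cR'$ partitions $\Z_t\times\Z_4$. Fix a point $(x,i)\in\Z_t\times\Z_4$. Since $\cR$ is a parallel class of the $\mathrm{BP}(t,4)$ $\dS$ on $\Z_t$, the element $x$ lies in exactly one block $B\in\cR$. By the construction, the only quadruple of $\cR'$ containing a point with second coordinate $i$ that comes from $B$ is $\{(x_0,i),(x_1,i),(x_2,i),(x_3,i)\}$, and $(x,i)$ lies in it precisely because $x\in\{x_0,x_1,x_2,x_3\}$. No other block $B''\neq B$ of $\cR$ contains $x$, so $(x,i)$ is covered by exactly one quadruple of $\cR'$. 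Counting also confirms this: $\cR$ has $t/4$ blocks, each giving $4$ quadruples, for $t$ quadruples covering $4t$ points with no overlap, which is exactly one parallel class of $\dP$.

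\medskip\noindent\textbf{Part 2 (each Group-1 quadruple covered once).} Next I would establish the bijection between Group-1 quadruples and the lifted blocks. A Group-1 quadruple has the form $\{(x_0,i),(x_1,i),(x_2,i),(x_3,i)\}$ for a unique layer $i\in\Z_4$ and a unique $4$-subset $\{x_0,x_1,x_2,x_3\}\subseteq\Z_t$. Since $\dS$ is a $\mathrm{BP}(t,4)$, that $4$-subset appears in exactly one block $B$ of exactly one parallel class $\cR$ of $\dS$; this $\cR$ produces $\cR'$, and $B$ contributes exactly the quadruple $B'$ in layer $i$. Hence each Group-1 quadruple lies in exactly one parallel class of Type~1, with neither omission nor repetition.

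\medskip\noindent\textbf{Part 3 (counting).} Finally, the count follows from Theorem~\ref{thm:num_classes}: the number of parallel classes in a $\mathrm{BP}(t,4)$ is $\binom{t-1}{3}$, and each parallel class $\cR$ of $\dS$ yields exactly one parallel class $\cR'$ of Type~1, so there are $\binom{t-1}{3}$ of them. Each contains $t$ quadruples, all of Group~1 by the configuration remark above. I do not anticipate a genuine obstacle here—the construction is a direct product-with-diagonal-layers lift, and all three parts reduce to the defining properties of $\dS$ as a Baranyai partition; the only thing to be careful about is keeping the roles of ``block of $\cR$,'' ``parallel class $\cR$,'' and ``layer $i$'' cleanly separated when asserting uniqueness in Parts~1 and~2.
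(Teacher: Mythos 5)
Your proof is correct and follows essentially the same route as the paper: Part~1 uses that each point of $\Z_t$ lies in exactly one block of $\cR$, Part~2 uses that each $4$-subset of $\Z_t$ lies in exactly one parallel class of $\dS$, and Part~3 invokes Theorem~\ref{thm:num_classes} together with the one-to-one correspondence $\cR \mapsto \cR'$. You simply spell out the point-covering and bijection arguments in more detail than the paper, which states them in one line each; no gaps.
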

\begin{proof}
Since $\cR$ is a parallel class in $\dS$, it follows that
$$
\cR'=\bigcup_{B\in \cR, i \in \Z_4} B'_i
$$
forms a parallel class on the point set $\Z_t \times \Z_4$.

For each $i \in \Z_4$, each quadruple $\{(x_0,i),(x_1,i),(x_2,i),(x_3,i)\}$ is contained in exactly
one parallel class in $\dP$ since any block $\{x_0,x_1,x_2,x_3\}$ is contained in exactly one parallel class in $\dS$.

By Theorem~\ref{thm:num_classes}, the number of parallel classes in a $\mathrm{BP}(t,4)$ is $\binom{t-1}{3}$.
Each parallel class of $\dS$ corresponds to exactly one parallel class of Type 1 in $\dP$, and
hence the number of parallel classes of Type 1 in $\dP$ is also $\binom{t-1}{3}$.
Clearly, each such parallel class contains $t$ quadruples from Group~1.
\end{proof}

\subsubsection{Type 1 for $t \equiv 3~(\text{mod}~4)$}
\label{sec:T4t3}

The set $\dS$ is a $\mathrm{BP}(t+1,4)$ on the point set $\Z_t \cup \{ \Omega_1 \}$ and $\cR$ is a parallel class in $\dS$.
For each block $B \in \cR$ construct a set of quadruples $\cR'$ as follows.

\begin{enumerate}
\item
If $\Omega_1 \notin B=\{x_0,x_1,x_2,x_3\}$ then construct the following four quadruples in $\cR'$:
$$
\{ (x_0,i),(x_1,i),(x_2,i),(x_3,i)\}, ~~ i \in \Z_4.
$$

\item If $B= \{ x_0,x_1,x_2,\Omega_1 \}$, then construct the following three quadruples in $\cR'$:
$$
\{ (x_0,i),(x_1,i),(x_2,i),(x_i,3) \}, ~~ i \in \{0,1,2 \} .
$$
\end{enumerate}

\begin{lemma}
\label{lem:rm31f4}
The subset of quadruples from Group 2 contained in the parallel classes of Type 1 when $t \equiv 3~(\text{mod}~4)$ is
$$
\{ \{ (x,0),(y,0),(z,0),(x,3) \}, \{ (x,1),(y,1),(z,1),(y,3) \}, \{ (x,2),(y,2),(z,2),(z,3) \} ~:~ x,y,z \in \Z_t \} .
$$
This subset contains exactly $(t-2) \binom{t}{2}$ quadruples from Group 2.
\end{lemma}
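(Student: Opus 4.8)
The plan is to trace every Group~2 quadruple back to its source block in $\dS$ and then invoke the Baranyai property of $\dS$ to count. First I would observe that, of the two cases in the construction of Type~1 for $t \equiv 3~(\text{mod}~4)$, the blocks $B$ with $\Omega_1 \notin B$ produce only quadruples of configurations $(4,0,0,0)$, $(0,4,0,0)$, $(0,0,4,0)$, $(0,0,0,4)$, i.e.\ quadruples from Group~1, whereas a block $B = \{x_0,x_1,x_2,\Omega_1\}$ produces the three quadruples $\{(x_0,i),(x_1,i),(x_2,i),(x_i,3)\}$ for $i \in \{0,1,2\}$, of configurations $(3,0,0,1)$, $(0,3,0,1)$, and $(0,0,3,1)$, respectively. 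Hence every Group~2 quadruple arising in Type~1 comes from a block containing $\Omega_1$.

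Next I would identify the totality of such blocks. Since $\dS$ is a $\mathrm{BP}(t+1,4)$ on $\Z_t \cup \{\Omega_1\}$, every $4$-subset of the point set lies in exactly one block of exactly one parallel class. The $4$-subsets containing $\Omega_1$ are precisely the sets $\{x_0,x_1,x_2,\Omega_1\}$ with $\{x_0,x_1,x_2\}$ a $3$-subset of $\Z_t$, so as $\cR$ ranges over all parallel classes of $\dS$ the blocks containing $\Omega_1$ are in bijection with the $\binom{t}{3}$ triples of $\Z_t$. Writing $x = x_0 < y = x_1 < z = x_2$ for the ordered triple (using the increasing-order convention of Section~\ref{sec:general}), the three quadruples produced are exactly $\{(x,0),(y,0),(z,0),(x,3)\}$, $\{(x,1),(y,1),(z,1),(y,3)\}$, and $\{(x,2),(y,2),(z,2),(z,3)\}$. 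This gives the claimed characterization in both directions: every produced Group~2 quadruple is of one of the three listed forms, and every quadruple of a listed form (with $x<y<z$) is produced by the unique block in which $\{x,y,z\}$ meets $\Omega_1$.

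For the count, I would check distinctness: two triples with distinct increasing representatives give distinct layer-$0,1,2$ ``triple parts,'' hence distinct quadruples, and the three quadruples from a single block lie in layers $0$, $1$, $2$ and so are pairwise distinct. Thus the total is $3\binom{t}{3}$, and a routine simplification yields $3\binom{t}{3} = \tfrac{t(t-1)(t-2)}{2} = (t-2)\binom{t}{2}$, as asserted.

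The one point requiring care is the ordering convention. The layer-$3$ element is the $i$-th smallest member of the triple, so the three forms are genuinely indexed by ordered triples $x<y<z$ rather than by an unordered choice of three distinct points; reading $x,y,z$ as unordered would triple the count. It is precisely this convention that makes each form contribute $\binom{t}{3}$ quadruples and the total match $(t-2)\binom{t}{2}$.
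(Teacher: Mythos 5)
Your proof is correct and takes essentially the same route as the paper's: identify the Group~2 quadruples with the $\binom{t}{3}$ blocks of $\dS$ containing $\Omega_1$, note that each such block induces exactly three Group~2 quadruples, and simplify $3\binom{t}{3}=(t-2)\binom{t}{2}$. Your additional checks of distinctness and of the increasing-order convention are sound elaborations of steps the paper leaves implicit, but they do not alter the argument.
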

\begin{proof}
In a $\mathrm{BP}(t+1,4)$, any four different points from $\Z_t \cup \{ \Omega_1 \}$ occurs as a block exactly once.
Hence, the point $\Omega_1$ occurs with any three points, of $\Z_t$, in some block exactly once.
Therefore, the number of quadruples containing $\Omega_1$ is $\binom{t}{3}$. Each such quadruple
induces three quadruples from Group~2 defined above.
Thus, this set contains exactly $3 \binom{t}{3}=(t-2) \binom{t}{2}$ quadruples from Group 2.
\end{proof}

\begin{lemma}
\label{lem:T4t3}
$~$
\begin{enumerate}
\item The subset $\cR'$ is a parallel class on the point set $\Z_t \times \Z_4$.

\item Each quadruple of Group 1 is contained in exactly one such parallel class.

\item The number of parallel classes of Type 1 when $t \equiv 3~(\text{mod}~4)$ is $\binom{t}{3}$.
Each such parallel class contains $t-3$ quadruples from Group 1 and three quadruples from Group 2.
\end{enumerate}
\end{lemma}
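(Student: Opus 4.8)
The plan is to verify the three claims of Lemma~\ref{lem:T4t3} by leveraging the structure of $\dS = \mathrm{BP}(t+1,4)$ and the parallel-class construction given just above. First I would show that each $\cR'$ is a parallel class on $\Z_t \times \Z_4$. The point set has $4t$ elements, so I must check that $\cR'$ partitions it. Since $\cR$ is a parallel class of $\dS$ on $\Z_t \cup \{\Omega_1\}$, it partitions these $t+1$ points; because $t+1 \equiv 0 \pmod 4$, exactly $(t+1)/4$ blocks appear, of which exactly one contains $\Omega_1$. The blocks $B$ not containing $\Omega_1$ each yield four ``vertical'' quadruples covering $\{x_0,x_1,x_2,x_3\}\times\Z_4$; collectively the $\Omega_1$-free blocks cover $(\Z_t\setminus\{x_0,x_1,x_2\})\times\Z_4$ with perfectly disjoint quadruples, where $\{x_0,x_1,x_2\}$ are the three finite points of the unique $\Omega_1$-block. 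For that distinguished block $B=\{x_0,x_1,x_2,\Omega_1\}$, the three quadruples $\{(x_0,i),(x_1,i),(x_2,i),(x_i,3)\}$ for $i\in\{0,1,2\}$ must exactly cover the remaining $12$ points $\{x_0,x_1,x_2\}\times\Z_4$. The key observation is that the ``3-layer'' copies of $x_0,x_1,x_2$ are distributed one per quadruple via the index $x_i$ in the last coordinate, so together these three quadruples cover $(x_0,0),(x_1,0),(x_2,0)$; $(x_0,1),(x_1,1),(x_2,1)$; $(x_0,2),(x_1,2),(x_2,2)$; and $(x_0,3),(x_1,3),(x_2,3)$ — i.e.\ all twelve points, each exactly once. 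This disjointness-and-coverage check is the technical heart of claim~(1).

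Next, for claim~(2) I would argue that every Group~1 quadruple appears in exactly one such $\cR'$. A Group~1 quadruple has the form $\{(x_0,i),(x_1,i),(x_2,i),(x_3,i)\}$ for a fixed layer $i$ and four distinct finite points. Such a quadruple arises in the construction only through case~(1), applied to the $\dS$-block $\{x_0,x_1,x_2,x_3\}\subseteq\Z_t$ (which does not contain $\Omega_1$). Since $\dS$ is a $\mathrm{BP}(t+1,4)$, the four finite points $\{x_0,x_1,x_2,x_3\}$ lie in exactly one block, hence in exactly one parallel class $\cR$, hence the Group~1 quadruple lies in exactly one $\cR'$. I must also confirm that case~(2) never produces a Group~1 quadruple: the quadruples from case~(2) have the form $\{(x_0,i),(x_1,i),(x_2,i),(x_i,3)\}$, which lies in configuration $(j_0,j_1,j_2,j_3)$ with three points in layer $i$ and one in layer $3$, so for $i\in\{0,1,2\}$ this is a Group~2 configuration, not Group~1.

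For claim~(3), I would count the parallel classes and the quadruples of each group within a class. By Theorem~\ref{thm:num_classes}, $\dS=\mathrm{BP}(t+1,4)$ has $\binom{(t+1)-1}{4-1}=\binom{t}{3}$ parallel classes, and each $\cR$ yields exactly one $\cR'$, giving $\binom{t}{3}$ parallel classes of Type~1. Within a single $\cR'$, the unique $\Omega_1$-block contributes three Group~2 quadruples (case~(2)), and each of the remaining $\tfrac{t+1}{4}-1=\tfrac{t-3}{4}$ blocks contributes four Group~1 quadruples (case~(1)), for $4\cdot\tfrac{t-3}{4}=t-3$ Group~1 quadruples. This matches the stated counts of $t-3$ from Group~1 and three from Group~2.

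I expect the main obstacle to be the coverage verification for the distinguished $\Omega_1$-block in claim~(1): one must be careful that the three case-(2) quadruples, which mix a layer-$3$ point indexed by $x_i$ with three layer-$i$ points, together hit every element of $\{x_0,x_1,x_2\}\times\Z_4$ exactly once. Concretely, the layer-$3$ coverage of $(x_0,3),(x_1,3),(x_2,3)$ comes precisely from the three choices $i=0,1,2$ selecting the last coordinate $x_0,x_1,x_2$ respectively, so each layer-$3$ point is hit once; the layers $0,1,2$ are each covered by the corresponding single quadruple. This bookkeeping, together with the already-established global partition of the $\Omega_1$-free blocks, completes the argument, but it is the one place where a careless indexing slip would break the proof.
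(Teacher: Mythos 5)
Your proposal is correct and follows essentially the same route as the paper's proof, which simply declares claims (1) and (2) ``readily verified'' and counts $\binom{t}{3}$ parallel classes via Theorem~\ref{thm:num_classes} with the same $t-3$ versus $3$ quadruple split. You merely fill in the verification the paper omits --- in particular the careful check that the three case-(2) quadruples cover $\{x_0,x_1,x_2\}\times\Z_4$ exactly once --- and that bookkeeping is accurate.
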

\begin{proof}
The first two claims of the lemma are readily verified.
For the third claim, note that each parallel class of $\dS$ corresponds to a unique parallel class of $\dP$.
By Theorem~\ref{thm:num_classes} there are $\binom{t}{3}$ parallel classes in a $\mathrm{BP}(t+1,4)$ and hence the number of parallel
classes of Type 1, in $\dP$ when $t \equiv 3~(\text{mod}~4)$, is $\binom{t}{3}$.
Clearly, each such parallel class contains $t-3$ quadruples from Group 1 and three quadruples from Group 2.
\end{proof}

\subsubsection{Type 1 for $t \equiv 2~(\text{mod}~4)$}
\label{sec:T4t2}

The set $\dS$ is a $\mathrm{BP}(t+2,4)$ on the point set $\Z_t \cup \{ \Omega_1, \Omega_2 \}$ and $\cR$ is a parallel class in $\dS$.
For each block $B \in \cR$ define the following blocks in $\cR'$.

\begin{enumerate}
\item
If $\Omega_1, \Omega_2 \notin B=\{x_0,x_1,x_2,x_3\}$ then construct the following four quadruples in $\cR'$.
$$
\{ (x_0,i),(x_1,i),(x_2,i),(x_3,i)\}, ~~ i \in \Z_4.
$$

\item If $\Omega_2 \notin B= \{ x_0,x_1,x_2,\Omega_1 \}$, then construct the following three quadruples in $\cR'$.
$$
\{ (x_0,i),(x_1,i),(x_2,i),(x_i,3) \}, ~~i \in \Z_3.
$$

\item If $\Omega_1 \notin B= \{ x_0,x_1,x_2,\Omega_2 \}$, then construct the following three quadruples in $\cR'$.
$$
\{ (x_0,i),(x_1,i),(x_2,i),(x_{i+1},3) \}, ~~i \in \Z_3.
$$

\item If $B= \{ x_0,x_1,\Omega_1,\Omega_2 \}$, then construct the two quadruples in $\cR'$.
$$
\{ (x_0,0),(x_1,0),(x_0,1),(x_1,1) \} \}.
$$
$$
\{ (x_0,2),(x_1,2),(x_0,3),(x_1,3) \} \}.
$$
\end{enumerate}

\vspace{0.3cm}

Recall, that there is an order between the elements of each block of $\dS$ which implies that there
is no ambiguity in the definitions of the blocks of $\dP$.

\vspace{0.5cm}

\begin{lemma}
\label{lem:rm22a31f4}
$~$
\begin{enumerate}
\item The subset of quadruples from Group 2 contained in the parallel classes of Type 1 when $t \equiv 2~(\text{mod}~4)$ is
{\small
$$
\{ \{ (x,0),(y,0),(z,0),(x,3) \}, \{ (x,1),(y,1),(z,1),(y,3) \}, \{ (x,2),(y,2),(z,2),(z,3) \} ~:~ x,y,z \in \Z_t \}
$$
$$
\cup \{ \{ (x,0),(y,0),(z,0),(y,3) \}, \{ (x,1),(y,1),(z,1),(z,3) \}, \{ (x,2),(y,2),(z,2),(x,3) \} ~:~ x,y,z \in \Z_t \} .
$$
}
This subset contains exactly $t(t-1)(t-2)$ quadruples from Group 2.

\item The subset of quadruples from Group 3 contained in the parallel classes of Type 1 when $t \equiv 2~(\text{mod}~4)$ is
$$
\{ \{ (x,i),(y,i),(x,i+1),(y,i+1) \} ~:~ x,y \in \Z_t ,~ i=0,2 \}  ~.
$$
This subset contains exactly $(t-1)t$ quadruples from Group 3.
\end{enumerate}
\end{lemma}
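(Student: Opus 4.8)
The plan is to handle the two parts in parallel, each time first pinning down which of the four block-types in the construction of Section~\ref{sec:T4t2} can possibly contribute a quadruple of the stated group, then reading off the produced quadruples explicitly, and finally counting them through the block structure of $\dS$. For Part~1, I would begin by noting that a Group~2 quadruple has configuration $(3,0,0,1)$, $(0,3,0,1)$, or $(0,0,3,1)$: three points on a common level together with a single point on level~$3$. Such a quadruple cannot come from Case~1, which produces only Group~1 quadruples of configuration $(4,0,0,0),\dots,(0,0,0,4)$, nor from Case~4, which produces only the Group~3 quadruples of configuration $(2,2,0,0)$ and $(0,0,2,2)$. Hence every Group~2 quadruple originates from a block of $\dS$ containing exactly one of $\Omega_1,\Omega_2$, i.e.\ from Case~2 or Case~3.

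Next I would read off the outputs directly. Respecting the increasing-order convention $x_0<x_1<x_2$ fixed in Section~\ref{sec:general}, a block $\{x_0,x_1,x_2,\Omega_1\}$ contributes $\{(x_0,i),(x_1,i),(x_2,i),(x_i,3)\}$ for $i\in\{0,1,2\}$, while a block $\{x_0,x_1,x_2,\Omega_2\}$ contributes $\{(x_0,i),(x_1,i),(x_2,i),(x_{i+1},3)\}$ with indices read cyclically; matching these against the two displayed families establishes the claimed description of the set. For the count, the essential input is that $\dS$ is a $\mathrm{BP}(t+2,4)$, hence a resolvable $\mathrm{S}(4,4,t+2)$ in which every $4$-subset occurs in exactly one block. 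Therefore each $3$-subset $\{x_0,x_1,x_2\}\subseteq\Z_t$ lies in exactly one block with $\Omega_1$ and exactly one block with $\Omega_2$, giving $\binom{t}{3}$ blocks in each of Cases~2 and~3, each producing three quadruples (one per level). Distinct blocks give distinct quadruples, and the Case~2 and Case~3 outputs at a fixed level are distinct because one reuses $x_i$ on level~$3$ and the other reuses $x_{i+1}\neq x_i$. Thus the six families are pairwise disjoint and each is in bijection with the $3$-subsets of $\Z_t$, yielding $6\binom{t}{3}=t(t-1)(t-2)$.

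For Part~2 the argument is the same but shorter. A Group~3 quadruple has configuration $(2,2,0,0)$ or $(0,0,2,2)$, which only Case~4 can produce, and the block $\{x_0,x_1,\Omega_1,\Omega_2\}$ with $x_0<x_1$ yields exactly $\{(x_0,0),(x_1,0),(x_0,1),(x_1,1)\}$ and $\{(x_0,2),(x_1,2),(x_0,3),(x_1,3)\}$, matching the displayed set with $i=0,2$. Since each pair $\{x_0,x_1\}\subseteq\Z_t$ sits in a unique block together with $\{\Omega_1,\Omega_2\}$, there are $\binom{t}{2}$ such blocks, each contributing two distinct quadruples, for a total of $2\binom{t}{2}=t(t-1)$.

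I expect the one genuine pitfall to be the bookkeeping forced by the increasing-order convention in Part~1. A careless count that allows the level-$3$ point to coincide with any of the three points on its level would give $3\binom{t}{3}$ quadruples per configuration and an inflated total of $\frac{3}{2}t(t-1)(t-2)$. What makes the true count $t(t-1)(t-2)$ is precisely that, at each fixed level, only two of the three repeated positions actually occur: the $x_i$-versus-$x_{i+1}$ rule together with $x_0<x_1<x_2$ forces a minimum/median/maximum pattern in which the missing position rotates with the level. The step that must therefore be carried out explicitly, rather than assumed, is the disjointness of the Case~2 and Case~3 families; everything else is routine verification.
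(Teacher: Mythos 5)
Your proof is correct and takes essentially the same route as the paper: both arguments count the $\binom{t}{3}$ blocks of $\dS$ containing $\Omega_1$ alone and the $\binom{t}{3}$ containing $\Omega_2$ alone (three Group~2 quadruples each, giving $6\binom{t}{3}=t(t-1)(t-2)$), and the $\binom{t}{2}$ blocks containing the pair $\{\Omega_1,\Omega_2\}$ (two Group~3 quadruples each, giving $2\binom{t}{2}=t(t-1)$). Your explicit verification that the Case~2 and Case~3 families are disjoint, via the rotating minimum/median/maximum pattern forced by the ordering convention $x_0<x_1<x_2$, is a detail the paper dismisses as immediate from the construction, but it does not constitute a different approach.
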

\begin{proof}
The structure of the quadruples in $\dP$ when $t \equiv 2~(\text{mod}~4)$ is an immediate consequence
from the definition of the blocks in the construction.

It is easily to verify that $\Omega_1$ (without $\Omega_2$) is contained $\binom{t}{3}$ quadruples in $\dS$ and
the same is true for $\Omega_2$ (without $\Omega_1$).
Hence, there are exactly $2 \cdot 3\cdot \binom{t}{3}=t(t-1)(t-2)$ quadruples from Group 2 in $\dP$.

Since the pair $\{\Omega_1,\Omega_2\}$ is contained in $\binom{t}{2}$ quadruples of $\dS$,
it follows that there are exactly $2\binom{t}{2}=(t-1)t$ quadruples from Group 3 in $\dP$.
\end{proof}

\begin{lemma}
\label{lem:T4t2}
$~$
\begin{enumerate}
\item The subset $\cR'$ is a parallel class, of Type 1, on the point set $\Z_t \times \Z_4$.

\item Each quadruple of Group 1 is contained in exactly one such parallel class.

\item The number of parallel classes of Type 1 when $t \equiv 2~(\text{mod}~4)$ is $\binom{t+1}{3}$.
There are $\binom{t}{2}$ parallel classes, in which each parallel class contains $t-2$ quadruples
from Group 1 and two quadruples from Group 3.
There are $\binom{t}{3}$ parallel classes, in which each parallel class contains $t-6$ quadruples
from Group 1 and six quadruples from Group 2.
\end{enumerate}
\end{lemma}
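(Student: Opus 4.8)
The plan is to build each Type~1 parallel class $\cR'$ from a single parallel class $\cR$ of $\dS = \mathrm{BP}(t+2,4)$, and to organise the whole argument around where the two added points $\Omega_1,\Omega_2$ sit inside $\cR$. Since $\cR$ partitions $\Z_t \cup \{\Omega_1,\Omega_2\}$ into blocks of size $4$, each of $\Omega_1$ and $\Omega_2$ lies in exactly one block, so there are only two possibilities: (A) $\Omega_1$ and $\Omega_2$ lie in a common block, which is then the unique block of the form $\{x_0,x_1,\Omega_1,\Omega_2\}$ treated by Case~4; or (B) they lie in distinct blocks, treated by Cases~2 and~3 respectively. In either situation all remaining blocks avoid $\Omega_1,\Omega_2$ and are handled by Case~1. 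I would prove the three claims by treating these two situations separately.

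For Claim~1 (that $\cR'$ is a parallel class), the key observation is that each block of $\cR$ contributes quadruples that tile the ``column'' over its real points: a pure block $\{x_0,x_1,x_2,x_3\}$ contributes the four layerwise quadruples tiling $\{x_0,x_1,x_2,x_3\}\times\Z_4$, while a Case~4 block contributes two quadruples tiling $\{x_0,x_1\}\times\Z_4$. In Case~B, the Case~2 block $\{x_0,x_1,x_2,\Omega_1\}$ and the Case~3 block over $\{x_0',x_1',x_2'\}$ each must be shown to tile a $3\times 4$ column, and the hard part is checking layer~$3$. First I would note that in layers $0,1,2$ the three points $(x_0,i),(x_1,i),(x_2,i)$ are each covered once; then I would verify that the trailing entries $(x_i,3)$ (respectively $(x_{i+1\bmod 3},3)$) range, as $i$ runs over $\{0,1,2\}$, over each of $x_0,x_1,x_2$ in layer~$3$ exactly once, i.e. that $i\mapsto x_i$ and $i\mapsto x_{i+1\bmod 3}$ are bijections of $\{x_0,x_1,x_2\}$. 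This bijectivity is precisely what makes each defect block cover its column exactly once. Since the blocks of $\cR$ partition the real points $\Z_t$, these disjoint columns tile $\Z_t\times\Z_4$, giving a parallel class; a count of $(t-2)+2=t$ quadruples in Case~A and $(t-6)+6=t$ in Case~B confirms the sizes and records the Group memberships asserted in Claim~3.

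For Claim~2, I would observe that Group~1 quadruples $\{(x_0,i),(x_1,i),(x_2,i),(x_3,i)\}$ are produced only by Case~1, hence only from a pure block $\{x_0,x_1,x_2,x_3\}\subseteq\Z_t$. Because $\dS$ is a large set of $\mathrm{S}(1,4,t+2)$, every such $4$-subset occurs as a block in exactly one parallel class of $\dS$, so the corresponding Group~1 quadruple lies in exactly one $\cR'$. Finally, for Claim~3 I would count parallel classes: by Theorem~\ref{thm:num_classes}, $\dS$ has $\binom{t+1}{3}$ parallel classes, and the construction assigns one $\cR'$ to each, giving $\binom{t+1}{3}$ classes of Type~1. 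To split them, note that the Case~A classes are exactly those containing a block $\{x_0,x_1,\Omega_1,\Omega_2\}$: each pair $\{x_0,x_1\}\subseteq\Z_t$ determines one such block, this block lies in a unique parallel class, and distinct pairs give distinct classes (a class contains $\Omega_1$ in only one block); hence there are $\binom{t}{2}$ Case~A classes. The remaining $\binom{t+1}{3}-\binom{t}{2}=\binom{t}{3}$ classes are then precisely the Case~B classes, by Pascal's identity, and combining this with the quadruple counts above yields the stated breakdown. The only genuine obstacle is the layer-$3$ bijectivity check in Case~B; everything else is bookkeeping on the partition structure of $\cR$.
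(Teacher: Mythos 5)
Your proof is correct and takes essentially the same approach as the paper: the paper likewise classifies the parallel classes of $\dS$ by whether $\Omega_1$ and $\Omega_2$ share a block (giving the $\binom{t}{2}$ classes with two Group~3 quadruples) or lie in distinct blocks (giving the $\binom{t}{3}$ classes with six Group~2 quadruples), the only cosmetic difference being that the paper counts both cases directly while you count the common-block case and obtain $\binom{t}{3}$ by subtraction via Pascal's identity. The column-tiling argument with the layer-$3$ bijectivity check, which you spell out for Claims~1 and~2, is exactly the content the paper dismisses as ``readily verified,'' and your details are sound.
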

\begin{proof}
The first two claims of the lemma are readily verified.
For the third claim, note that $\Omega_1$ ($\Omega_2$, respectively) without $\Omega_2$ ($\Omega_1$, respectively) occurs
in exactly $\binom{t}{3}$ parallel classes of $\dS$. Each such parallel class of $\dS$ (which contains one block
with $\Omega_1$ and another block with $\Omega_2$) induces
one parallel class of Type 1 in $\dP$, which contains $t-6$ quadruples
from Group 1 and six quadruples from Group 2.
Similarly, the pair $\{\Omega_1, \Omega_2\}$ occurs in exactly $\binom{t}{2}$ parallel classes of $\dS$.
Each such parallel class induces one parallel class of Type 1 in $\dP$, which contains $t-2$ quadruples
from Group 1 and two quadruples from Group 3.
Thus, the total number of parallel classes of Type 1 is $\binom{t}{3}+\binom{t}{2}=\binom{t+1}{3}$.
\end{proof}

\subsubsection{Type 1 for $t \equiv 1~(\text{mod}~4)$}
\label{sec:T4t1}

The set $\dS$ is a $\mathrm{BP}(t+3,4)$ on the point set $\Z_t \cup \{ \Omega_1, \Omega_2 , \Omega_3 \}$
and $\cR$ is a parallel class in $\dS$. For each block $B \in \cR$ construct the following blocks in $\cR' \in \dP$.

\begin{enumerate}
\item
If $\Omega_1, \Omega_2 , \Omega_3 \notin B=\{x_0,x_1,x_2,x_3\}$ then construct the following four quadruples in $\cR'$.
$$
\{ (x_0,i),(x_1,i),(x_2,i),(x_3,i)\}, ~~ i \in \Z_4.
$$

\item If $\Omega_1, \Omega_2 \notin B= \{ x_0,x_1,x_2,\Omega_3 \}$, then construct the following three quadruples in $\cR'$.
$$
\{ (x_0,i),(x_1,i),(x_2,i),(x_i,3) \}, ~~i \in \Z_3.
$$

\item If $\Omega_1, \Omega_3 \notin B= \{ x_0,x_1,x_2,\Omega_2 \}$, then construct the following three quadruples in $\cR'$.
$$
\{ (x_0,i),(x_1,i),(x_2,i),(x_{i+1},3) \}, ~~i \in \Z_3.
$$

\item If $\Omega_2, \Omega_3 \notin B= \{ x_0,x_1,x_2,\Omega_1 \}$, then construct the following three quadruples in $\cR'$.
$$
\{ (x_0,i),(x_1,i),(x_2,i),(x_{i+2},3) \}, ~~i \in \Z_3.
$$

\item If $\Omega_1 \notin B= \{ x_0,x_1, \Omega_2 ,\Omega_3 \}$, then construct the following two quadruples in $\cR'$.
$$
\{ (x_0,0),(x_1,0),(x_0,1),(x_1,1) \},
$$
$$
\{ (x_0,2),(x_1,2),(x_0,3),(x_1,3) \}.
$$

\item If $\Omega_2 \notin B= \{ x_0,x_1, \Omega_1 ,\Omega_3 \}$, then construct the following two quadruples in $\cR'$.
$$
\{ (x_0,0),(x_1,0),(x_0,3),(x_1,3) \},
$$
$$
\{ (x_0,1),(x_1,1),(x_0,2),(x_1,2) \}.
$$

\item If $\Omega_3 \notin B= \{ x_0,x_1, \Omega_1 ,\Omega_2 \}$, then construct the following two quadruples in $\cR'$.
$$
\{ (x_0,0),(x_1,0),(x_0,2),(x_1,2) \},
$$
$$
\{ (x_0,1),(x_1,1),(x_0,3),(x_1,3) \}.
$$

\item If $B= \{ x_0,\Omega_1,\Omega_2 , \Omega_3 \}$, then construct the following quadruple in $\cR'$.
$$
\{ (x_0,0),(x_0,1),(x_0,2),(x_0,3) \} \}.
$$
\end{enumerate}

\begin{lemma}
\label{lem:rm22a31a1111f4}
$~$
\begin{enumerate}
\item The subset of quadruples from Group 2 contained in the parallel classes of Type 1 when $t \equiv 1~(\text{mod}~4)$ is
{\small
$$
\{ \{ (x,i),(y,i),(z,i),(x,3) \}, \{ (x,i),(y,i),(z,i),(y,3) \}, \{ (x,i),(y,i),(z,i),(z,3) \} : x,y,z \in \Z_t,~ i=0,1,2 \} .
$$
}
This subset contains exactly $9 \binom{t}{3}$ quadruples from Group 2.

\item The subset of quadruples from Group 3 contained in parallel classes of Type 1 when $t \equiv 1~(\text{mod}~4)$ is
$$
\{ \{ (x,0),(y,0),(x,1),(y,1) \} ~:~ x,y \in \Z_t \}  \cup \{ \{ (x,2),(y,2),(x,3),(y,3) \} ~:~ x,y \in \Z_t \}
$$
$$
\cup \{ \{ (x,0),(y,0),(x,2),(y,2) \} ~:~ x,y \in \Z_t \}  \cup \{ \{ (x,1),(y,1),(x,3),(y,3) \} ~:~ x,y \in \Z_t \}
$$
$$
\cup \{ \{ (x,0),(y,0),(x,3),(y,3) \} ~:~ x,y \in \Z_t \}  \cup \{ \{ (x,1),(y,1),(x,2),(y,2) \} ~:~ x,y \in \Z_t \}~.
$$
This subset contains exactly $3(t-1)t$ quadruples from Group 3.

\item The subset of quadruples from Group 5 contained in the parallel classes of Type 1 when $t \equiv 1~(\text{mod}~4)$ is
$$
\{ \{ (i,0),(i,1),(i,2),(i,3) \} ~:~ i \in \Z_t \} ~.
$$
This subset contains exactly $t$ quadruples from Group 5 which form a parallel class.
\end{enumerate}
\end{lemma}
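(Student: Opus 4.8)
The plan is to exploit the fact that $\dS$, being a $\mathrm{BP}(t+3,4)$, is a large set of Steiner systems $\mathrm{S}(1,4,t+3)$: every $4$-subset of $\Z_t \cup \{\Omega_1,\Omega_2,\Omega_3\}$ occurs as a block in exactly one parallel class of $\dS$. I would first observe that the group of a quadruple produced by a block $B$ is governed solely by the number of special points $\Omega_j$ it contains. Blocks with one $\Omega$ fall under Cases~2--4 and yield Group~2; blocks with two $\Omega$'s fall under Cases~5--7 and yield Group~3; the single block type with three $\Omega$'s is Case~8 and yields Group~5. (Blocks with no $\Omega$ give Group~1 and are irrelevant here.) Thus each of the three claims reduces to enumerating the corresponding block type and reading off the quadruples it contributes.

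For the first claim, I would fix an increasingly ordered triple $\{x,y,z\}\subseteq\Z_t$ and note that the three blocks $\{x,y,z,\Omega_3\}$, $\{x,y,z,\Omega_2\}$, $\{x,y,z,\Omega_1\}$ each occur exactly once, invoking Cases~2, 3, and~4 respectively. The key point is the cyclic shift in the fourth coordinate: Case~2 attaches $(x_i,3)$, Case~3 attaches $(x_{i+1},3)$, and Case~4 attaches $(x_{i+2},3)$ to the layer-$i$ triple, for $i\in\{0,1,2\}$. Reading the three cases together, for each fixed layer $i$ the fourth point ranges over all of $(x,3),(y,3),(z,3)$, so the combined output for this triple is precisely the nine quadruples $\{(x,i),(y,i),(z,i),(w,3)\}$ with $i\in\{0,1,2\}$ and $w\in\{x,y,z\}$. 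This matches the displayed set, and summing over the $\binom{t}{3}$ triples gives $9\binom{t}{3}$ quadruples.

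The remaining two claims follow the same template. For Group~3, the three two-$\Omega$ block types correspond to Cases~5, 6, and~7; each such block $\{x,y,\Omega_a,\Omega_b\}$ occurs once and produces two quadruples of the form $\{(x,c),(y,c),(x,d),(y,d)\}$. The six layer-pairs $\{c,d\}$ arising across the three cases are $\{0,1\},\{2,3\}$ (Case~5), $\{0,3\},\{1,2\}$ (Case~6), and $\{0,2\},\{1,3\}$ (Case~7), which exhaust all $\binom{4}{2}=6$ pairs; hence the union is exactly the claimed set, of size $6\binom{t}{2}=3(t-1)t$. For Group~5, each point $x_0\in\Z_t$ yields the unique block $\{x_0,\Omega_1,\Omega_2,\Omega_3\}$ and hence the single quadruple $\{(x_0,0),(x_0,1),(x_0,2),(x_0,3)\}$; since every point $(j,\ell)\in\Z_t\times\Z_4$ lies in exactly the quadruple indexed by $j$, these $t$ quadruples partition $\Z_t\times\Z_4$ and form a parallel class.

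The only step requiring genuine care is the distinctness bookkeeping that upgrades the counts from upper bounds to equalities. For Group~2 one must confirm that the nine quadruples per triple are pairwise distinct and that no quadruple is produced by two different triples; both follow because a Group~2 quadruple determines its layer $i$, its layer-$i$ triple, and its fourth point unambiguously. For Group~3 distinctness is immediate once one checks that the six layer-pairs are all different, so the three cases never collide. Validity of every constructed quadruple (four genuinely distinct points) is automatic from the distinct layers or distinct $\Z_t$-coordinates. I expect no obstacle beyond this routine verification, since the entire enumeration is driven by the large-set property of $\dS$.
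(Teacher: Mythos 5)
Your proof is correct and takes essentially the same route as the paper's: both classify the blocks of $\dS$ by the number of $\Omega$-points they contain, invoke the fact that every $4$-subset of $\Z_t \cup \{\Omega_1,\Omega_2,\Omega_3\}$ occurs in exactly one parallel class of $\dS$, and count the resulting quadruples case by case (one $\Omega$ giving $9\binom{t}{3}$ Group~2 quadruples, two $\Omega$'s giving $6\binom{t}{2}=3(t-1)t$ Group~3 quadruples, three $\Omega$'s giving the $t$ Group~5 quadruples forming a parallel class). Your explicit distinctness bookkeeping --- the cyclic shifts making the fourth point range over all of $x,y,z$ in each layer, and the six layer-pairs exhausting all of $\binom{4}{2}$ --- merely spells out what the paper's proof leaves implicit.
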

\begin{proof}
$~$
\begin{enumerate}
\item
When $t \equiv 1~(\text{mod}~4)$, for each $i\in \Z_3$ and $B= \{ x,y,z,\Omega_i \} \in \cR$,
the subset of quadruples from Group 2 contained in the parallel classes of Type 1 contains the following set with three quadruples
$$
\{ \{ (x,i),(y,i),(z,i),(x,3) \}, \{ (x,i),(y,i),(z,i),(y,3) \}, \{ (x,i),(y,i),(z,i),(z,3) \}\}
$$
In addition, for each $i\in \Z_3$, $\Omega_1$ (without $\Omega_2$ or $\Omega_3$) is contained in $\binom{t}{3}$ quadruples of $\dS$.
The same is true for $\Omega_2$ (without $\Omega_1$ or $\Omega_3$) and $\Omega_3$ (without $\Omega_1$ or $\Omega_2$).
Hence, there are exactly $3\cdot 3\cdot \binom{t}{3}=9\binom{t}{3}$ quadruples from Group 2 in $\dP$.

\item
For any $1\leq i<j\leq 3$ and $B= \{ x,y,\Omega_i,\Omega_j\} \in \cR$, the subset of quadruples from Group 3
which are used in Type 1 contains two of the quadruples defined above.
Since the pair $\{\Omega_i,\Omega_j\}$ is contained in $\binom{t}{2}$ quadruples of $\dS$,
there are exactly $3\cdot 2\cdot \binom{t}{2}=3(t-1)t$ quadruples from Group~3 in $\dP$.

\item
For any quadruple $\{x, \Omega_1, \Omega_2,\Omega_3\} \in \cR$ in $\dS$, the corresponding quadruple from Group 5
contained in the parallel classes of Type 1 is $\{(x,0),(x,1),(x,2),(x,3)\}$.
Since the triple $\{\Omega_1, \Omega_2,\Omega_3\}$ is contained in $t$ quadruples of $\dS$, it follows
that there are exactly $t$ quadruples from Group 5 in $\dP$.
\end{enumerate}
\end{proof}

The claims in the following lemma are proved similarly to the ones in previous lemmas and
hence we omit the proof and leave it to the interested reader.
\begin{lemma}
\label{lem:T4t1}
$~$
\begin{enumerate}
\item The subset $\cR'$ is a parallel class on the point set $\Z_t \times \Z_4$.

\item Each quadruple of Group 1 is contained in exactly one such parallel class.

\item The number of parallel classes of Type 1 when $t \equiv 1~(\text{mod}~4)$ is $\binom{t+2}{3}$.
There are $t$ parallel classes, where each parallel class contains $t-1$ quadruples
from Group 1 and one quadruple from Group~5.
There are $3\binom{t}{2}$ parallel classes, in which each parallel class contains $t-5$ quadruples
from Group 1, three quadruples of Group 2, and two quadruples from Group 3.
There are $\binom{t}{3} - \binom{t}{2}$ parallel classes, in which each parallel class contains $t-9$ quadruples
from Group 1 and nine quadruples from Group 2.
\end{enumerate}
\end{lemma}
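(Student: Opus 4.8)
The plan is to follow the same template used for Lemmas~\ref{lem:T4t0},~\ref{lem:T4t3}, and~\ref{lem:T4t2}: claims~(1) and~(2) are verified by inspecting the block-expansion rules, while claim~(3) is a counting argument driven by how the three added points $\Omega_1,\Omega_2,\Omega_3$ are distributed among the blocks of a parallel class $\cR$ of $\dS$.

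For claim~(1), I would first observe that $\cR$ partitions $\Z_t\cup\{\Omega_1,\Omega_2,\Omega_3\}$, so the sets $S_B\times\Z_4$, where $S_B=B\cap\Z_t$ ranges over the blocks $B\in\cR$, are pairwise disjoint and their union is $\Z_t\times\Z_4$. It then suffices to check that, for each block $B$, the quadruples produced by the applicable rule cover $S_B\times\Z_4$ exactly once. This is a short case check: rule~(1) covers the $16$ points of $\{x_0,x_1,x_2,x_3\}\times\Z_4$; each of rules~(2)--(4) covers the $12$ points of $\{x_0,x_1,x_2\}\times\Z_4$, with the three level-$3$ points $(x_0,3),(x_1,3),(x_2,3)$ picked up by the cyclic index $x_i,x_{i+1},x_{i+2}$ taken modulo $3$; each of rules~(5)--(7) covers the $8$ points of $\{x_0,x_1\}\times\Z_4$ in two complementary pairs of levels; and rule~(8) covers $\{x_0\}\times\Z_4$. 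For claim~(2), a Group~1 quadruple $\{(x_0,i),(x_1,i),(x_2,i),(x_3,i)\}$ with $x_0,x_1,x_2,x_3\in\Z_t$ can only be produced by rule~(1), from the block $\{x_0,x_1,x_2,x_3\}$, and this block lies in exactly one parallel class of the $\mathrm{BP}(t+3,4)$ $\dS$; hence the quadruple lies in exactly one Type~1 class.

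For claim~(3), by Theorem~\ref{thm:num_classes} the design $\dS$ has $\binom{t+2}{3}$ parallel classes, and since each $\cR$ yields exactly one $\cR'$, there are $\binom{t+2}{3}$ Type~1 classes in total. I would then split these classes according to whether the three $\Omega$'s lie in one common block, in exactly two blocks, or in three distinct blocks. If all three share a block $\{x_0,\Omega_1,\Omega_2,\Omega_3\}$, rule~(8) produces one Group~5 quadruple while the remaining $t-1$ points of $\Z_t$ form no-$\Omega$ blocks giving $t-1$ Group~1 quadruples; the number of such classes equals the number of quadruples containing $\{\Omega_1,\Omega_2,\Omega_3\}$, namely $t$. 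If exactly two $\Omega$'s share a block, that block yields two Group~3 quadruples, the third $\Omega$ sits in a block yielding three Group~2 quadruples, and the remaining $t-5$ points give $t-5$ Group~1 quadruples; counting the three choices of co-located pair together with the $\binom{t}{2}$ choices for the two $\Z_t$-points in that block gives $3\binom{t}{2}$ such classes. The remaining classes, with the three $\Omega$'s in distinct blocks, contribute nine Group~2 and $t-9$ Group~1 quadruples each, and their number is $\binom{t+2}{3}-t-3\binom{t}{2}=\binom{t}{3}-\binom{t}{2}$, using the identity $\binom{t+2}{3}=\binom{t}{3}+t^2$. In each family the leftover count $t-1$, $t-5$, or $t-9$ is divisible by $4$ precisely because $t\equiv1\pmod4$, so the no-$\Omega$ blocks are well defined.

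The routine parts are claims~(1) and~(2) and the per-class block tallies, all mirroring the earlier lemmas. The only genuinely delicate point is the enumeration of the three families: one must verify that co-locating a pair $\{\Omega_i,\Omega_j\}$ automatically forces the third point into a separate block, so such a class never falls into the all-together family, and that no class is counted for two different pairs --- which holds because each $\Omega$ lies in a single block, so at most one pair can be co-located. These two observations make the three families disjoint and exhaustive, justifying the subtraction for the third count. As a cross-check, summing the per-class Group~2, Group~3, and Group~5 contributions over the three families reproduces the totals $9\binom{t}{3}$, $3(t-1)t$, and $t$ of Lemma~\ref{lem:rm22a31a1111f4}.
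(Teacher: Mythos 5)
Your proposal is correct and follows exactly the route the paper intends: the paper omits the proof of this lemma, stating only that it is ``proved similarly to the ones in previous lemmas,'' and your argument is the faithful execution of that template (as in Lemmas~\ref{lem:T4t3} and~\ref{lem:T4t2}), classifying the $\binom{t+2}{3}$ parallel classes of the $\mathrm{BP}(t+3,4)$ by whether $\Omega_1,\Omega_2,\Omega_3$ occupy one, two, or three blocks. Your handling of the delicate points is sound --- the three families are disjoint and exhaustive since at most one block of a class can contain two or more of the $\Omega$'s, the count $\binom{t+2}{3}-t-3\binom{t}{2}=\binom{t}{3}-\binom{t}{2}$ checks out, and your cross-check against the totals $9\binom{t}{3}$, $3(t-1)t$, and $t$ of Lemma~\ref{lem:rm22a31a1111f4} confirms the per-family tallies.
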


\subsection{Parallel Classes of Type 2 for $t$ divisible by 3}
\label{sec:T31}

Given any parallel class $\cR$ in $\dT$, a few parallel classes are defined for $\dP$.
The construction heavily depends on the residue of $t$ modulo 4.

Let $M$ be a $t \times t$ Latin square on the point set $\Z_t$,
where the rows and the columns are indexed by~$\Z_t$.
Given any parallel class $\cR$ in $\dT$, construct $4t$, $4t-1$, $4t-2$, or $4t-3$, parallel classes in $\dP$,
depending if $t~(\text{mod}~4)$ equals 0, 3, 2, or 1, respectively.
The first $3t$ parallel classes will be denoted by $\cR_{i,j}$, $i \in \Z_t$,
$j \in \{0,1,2\}$. For a given $j \in \{0,1,2\}$, let $\phi : \{0,1,2,3\} \setminus \{j\} \longrightarrow \{0,1,2\}$ be any bijection.
Let $B_0, B_1,\ldots,B_{(t-3)/3}$, be the blocks of $\cR$.
For a given $i \in \Z_t$, $j \in \{ 0,1,2 \}$, define the following $t$ quadruples in~$\cR_{i,j}$.
$$
\{ (x_0,s),(x_1,s),(x_2,s),(M(i,3m+\phi(s)),j)  \}, ~\{ x_0,x_1,x_2\} \in B_m, ~ 0 \leq m \leq \frac{t-3}{3}, ~s \in \{0,1,2,3\} \setminus \{j\}.
$$

For the last $t$, $t-1$, $t-2$, $t-3$ parallel classes,
we distinguish now between four cases which have similar solutions, where we define
0, 1, 2, or 3, respectively, rows of the following matrix $M$.
If $t \equiv 1$ or $2$ or $3~(\text{mod}~4)$ and
if $B_m = \{ x_{m_0},x_{m_1},x_{m_2}\}$, $0 \leq m \leq \frac{t-3}{3}$, then $M(0,3m+i)=x_{m_i}$, for $i \in \Z_3$.
If $t \equiv 1$ or $2~(\text{mod}~4)$ and
if $B_m = \{ x_{m_0},x_{m_1},x_{m_2}\}$, $0 \leq m \leq \frac{t-3}{3}$, then $M(1,3m+i)=x_{m_{i+1}}$, for $i \in \Z_3$.
If $t \equiv 1 ~(\text{mod}~4)$ and
if $B_m = \{ x_{m_0},x_{m_1},x_{m_2}\}$, $0 \leq m \leq \frac{t-3}{3}$, then $M(2,3m+i)=x_{m_{i+2}}$, for $i \in \Z_3$.
For each congruence, we complete $M$ to be a Latin square (see Theorem~\ref{thm:completLS}).
Each one of the rows of $M$, which was not predetermined corresponds to a parallel class
denoted by $\cR_i$, $i \in \Z_t$, $i \geq 0$ if $t$ is divisible by 4;
$i \geq 1$ if $t \equiv 3 ~(\text{mod}~4)$;
$i \geq 2$ if $t \equiv 2 ~(\text{mod}~4)$;
and $i \geq 3$ if $t \equiv 1 ~(\text{mod}~4)$.
Each parallel class has $t$ quadruples as follows.
$$
\{ (x_0,s),(x_1,s),(x_2,s),(M(i,3m+s),3)  \},  ~\{ x_0,x_1,x_2\} \in B_m, ~ 0 \leq m \leq \frac{t-3}{3}, ~ s \in \{0,1,2\}.
$$

\begin{remark}
We can avoid Theorem~\ref{thm:completLS} and define the completion of $M$ to
a $t \times t$ Latin square. We have not done this to restrict the length of this section.
\end{remark}

\begin{lemma}
\label{lem:T31}
Each set ($\cR_{i,j}$ or $\cR_i$) of Type 2 defined for $t$ divisible by 3 is a parallel class
on the point set $\Z_t \times \Z_4$ in $\dP$.
Moreover,
\begin{enumerate}
\item If $t$ is divisible by 4, then there are $4t$ parallel classes of Type 2. These parallel classes
contain all the quadruples of Group 2. There are $2t(t-1)(t-2) = 4t\binom{t-1}{2}$ such parallel classes,
in which each parallel class contains $t$ quadruples from Group~2.

\item If $t \equiv 1~(\text{mod}~4)$, then there are $4t-3$ parallel classes of Type 2. Each quadruple of
Group~2 is contained in either one of these parallel classes or in the parallel classes of
Type 1. There are $(4t-3)\binom{t-1}{2}$ such parallel classes,
in which each parallel class contains $t$ quadruples from Group~2.

\item If $t \equiv 2~(\text{mod}~4)$, then there are $4t-2$ parallel classes of Type 2. Each quadruple of
Group~2 is contained in either one of these parallel classes or in the parallel classes of
Type 1. There are $(2t-1)(t-1)(t-2)=(4t-2)\binom{t-1}{2}$ such parallel classes,
in which each parallel class contains $t$ quadruples from Group~2.

\item If $t \equiv 3~(\text{mod}~4)$, then there are $4t-1$ parallel classes of Type 2. Each quadruple of
Group~2 is contained in either one of these parallel classes or in the parallel classes of
Type 1. There are $(4t-1)\binom{t-1}{2}$ such parallel classes,
in which each parallel class contains $t$ quadruples from Group~2.
\end{enumerate}
Each such parallel class of Type 2 contains only quadruples from Group 2.
\end{lemma}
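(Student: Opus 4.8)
The plan is to verify three things for every class produced ($\cR_{i,j}$ or $\cR_i$): that it partitions $\Z_t\times\Z_4$ into $t$ quadruples, that each of its quadruples lies in Group~2, and that, taken together with the Group~2 quadruples already committed to Type~1, each Group~2 quadruple is used exactly once. The numerical claims in items 1--4 then drop out of Theorem~\ref{thm:num_classes}.

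\emph{Parallel-class property.} Fix a parallel class $\cR$ of $\dT$; since $t$ is divisible by $3$ its blocks $B_0,\dots,B_{(t-3)/3}$ partition $\Z_t$. In $\cR_{i,j}$ the points with last coordinate $s\neq j$ are exactly the $(x,s)$ with $x\in B_m$ over all $m$, which cover $\Z_t\times\{s\}$ once because the $B_m$ partition $\Z_t$; the points with last coordinate $j$ are the $(M(i,3m+\phi(s)),j)$, and as $(m,s)$ ranges over its domain the index $3m+\phi(s)$ runs bijectively over $\{0,\dots,t-1\}$ (here $\phi$ is a bijection and $m$ takes $t/3$ values), so these points form row~$i$ of $M$ paired with $j$, a permutation of $\Z_t\times\{j\}$. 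Hence $\cR_{i,j}$ is a partition of $\Z_t\times\Z_4$ into $t$ quadruples. The identical argument handles $\cR_i$ with layer~$3$ in place of layer~$j$ and column index $3m+s$, invoking Theorem~\ref{thm:completLS} to ensure that $M$ with its prescribed rows is a genuine Latin square. Since every quadruple built has three points in one layer and a single point in a different layer, all these classes are of Type~2.

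\emph{Covering and disjointness.} A Group~2 quadruple is determined by its triple $T$ (the layer holding three points), its triple-layer $s$, its single-layer $j\neq s$, and its single point $(w,j)$. Because $\dT$ is a $\mathrm{BP}(t,3)$, $T$ lies in a unique block $B_m$ of a unique parallel class $\cR$, fixing $M$, $m$, and the relevant column of $M$. If $j\in\{0,1,2\}$ the quadruple can arise only from $\cR_{i,j}$, and the Latin-square property makes $i\mapsto M(i,3m+\phi(s))$ a bijection onto $\Z_t$, so exactly one $i$ yields the single point $w$; as Lemmas~\ref{lem:rm31f4}, \ref{lem:rm22a31f4}, and~\ref{lem:rm22a31a1111f4} place no single-layer-$\{0,1,2\}$ quadruple into Group~2, such a quadruple is covered exactly once overall. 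If $j=3$ the quadruple can arise only from a row of $M$ through the $\cR_i$ recipe, and $i\mapsto M(i,3m+s)$ is again a bijection onto $\Z_t$; the $0,1,2,$ or $3$ prescribed rows reproduce the diagonal pattern $M(0,3m+i)=x_{m_i}$ and its cyclic shifts, which are precisely the single-layer-$3$ Group~2 quadruples listed in those same Type~1 lemmas, so the prescribed rows account exactly for the Type~1 contribution while the remaining rows give the classes $\cR_i$. Thus every single-layer-$3$ Group~2 quadruple is used once, split between Type~1 and Type~2.

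\emph{Counts and main obstacle.} By Theorem~\ref{thm:num_classes}, $\dT$ has $\binom{t-1}{2}$ parallel classes, each producing $4t$, $4t-1$, $4t-2$, $4t-3$ Type~2 classes according as $t\equiv 0,3,2,1\pmod 4$, every class holding $t$ quadruples; multiplying by $\binom{t-1}{2}$ gives the stated totals, and since the total number of Group~2 quadruples is $2t^2(t-1)(t-2)$, one checks that $t$ times each total, plus the Type~1 Group~2 count from the matching lemma, namely $0$, $(t-2)\binom{t}{2}$, $t(t-1)(t-2)$, or $9\binom{t}{3}$, always sums to $2t^2(t-1)(t-2)$; this confirms the exactly-once covering (by Type~2 alone for $t\equiv 0$, complementary to Type~1 otherwise). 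The delicate step is precisely this single-layer-$3$ bookkeeping: one must verify quadruple for quadruple that the diagonal-and-shift rows prescribed for $M$ coincide with the Group~2 members already fixed in Lemmas~\ref{lem:rm31f4}--\ref{lem:rm22a31a1111f4}, so that no such quadruple is double-covered and none is missed.
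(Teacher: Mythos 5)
Your proposal is correct and follows essentially the same route as the paper: the paper's own proof is just the counting step (each of the $\binom{t-1}{2}$ parallel classes of $\dT$ yields $4t$, $4t-1$, $4t-2$, or $4t-3$ Type~2 classes, combined with the Type~1 lemmas for the residues $1,2,3 \pmod 4$), declaring the structural facts easy to verify. Your additional verifications --- the column-bijection argument from the Latin-square property for exact coverage, and the identification of the prescribed rows $M(0,3m+i)=x_{m_i}$ and its cyclic shifts with the single-layer-$3$ Group~2 quadruples of Lemmas~\ref{lem:rm31f4}, \ref{lem:rm22a31f4}, and \ref{lem:rm22a31a1111f4} --- correctly fill in exactly what the paper leaves implicit.
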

\begin{proof}
If $t$ is divisible by 4, then by Lemma~\ref{lem:T4t0}, no quadruples of Group 2 are contained in the parallel classes of Type 1.
Hence, for each parallel class of $\mathrm{BP}(t,3)$, there are $4t$ parallel classes of Type 2 in $\dP$. By Theorem~\ref{thm:num_classes},
there exist $\binom{t-1}{2}$ parallel classes in a $\mathrm{BP}(t,3)$.
Thus, there are $4t\binom{t-1}{2}$ such parallel classes in $\dP$, where each parallel class contains $t$ quadruples from Group 2.

Claims 2, 3, and 4 are proved in a similar way.
\end{proof}

\subsection{Parallel Classes of Type 3}
\label{sec:T22}

For the definition of the parallel classes of Type 3 in $\dP$ we
distinguish between even $t$ and odd $t$.

\subsubsection{Type 3 for even $t$}
\label{sec:T22even}

For even $t$ a one-factorization $\cF =\{ \cF_0,\cF_1,\ldots,\cF_{t-2} \}$ of $K_t$ is required.
For each $0 \leq i,s \leq t-2$, $0 \leq r \leq \frac{t-2}{2}$, and $a,b,c,d \in \Z_4$, where $|\{a,b,c,d\}|=4$,
the following set is defined in $\dP$:
{\small
$$
\{  \{ (x ,a),(y,a),(z,b),(v,b) \},\{ (x,c),(y,c),(z,d),(v,d) \}  ~:~
\{ x,y \} = \cF_{i,j}, ~\{z,v\} = \cF_{s,j+r}, ~ 0 \leq j \leq (t-2)/2  \}~.
$$
}

Note that there are three options for $a,b$, i.e.,
$\{a,b\}=\{0,1\}$, $\{a,b\}=\{0,2\}$ and $\{a,b\}=\{0,3\}$ and
$\{c,d\}=\{0,1,2,3\}\setminus\{a,b\}$.

\begin{lemma}
\label{lem:T22even}
$~$
\begin{enumerate}
\item For each $0 \leq i,s \leq t-2$, $0 \leq r \leq \frac{t-2}{2}$, and $a,b,c,d \in \Z_4$, where $|\{a,b,c,d\}|=4$,
the $t$ defined quadruples form a parallel class with $t$ quadruples from Group 3.

\item If $t$ is divisible by 4, then each quadruple from the configurations of Group 3 appears only in one
of the defined parallel classes.

\item If $t \equiv 2~(\text{mod}~4)$, then each quadruple from the configurations of Group 3 appears either in one
of the defined parallel classes or in the parallel classes of Type 1.

\item The number of defined parallel classes of Type 3 for even $t$ is $3(t-1) \binom{t}{2}$.
\end{enumerate}
\end{lemma}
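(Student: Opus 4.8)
The plan is to verify the four assertions in the order 1, 4, 2, 3, the core being a bijection between Group~3 quadruples and the data consisting of the pair $(i,s)$, the shift $r$, and the choice of one of the three partitions of $\{0,1,2,3\}$ into two pairs (the part containing $0$ playing the role of $\{a,b\}$). The only structural facts I need are that the edges of a one-factor $\cF_i$ partition $\Z_t$ and that $j\mapsto j+r \pmod{t/2}$ permutes the $t/2$ edge-indices of each one-factor.

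For assertion~1 I would fix $(i,s,r)$ and a partition $\{a,b\}\mid\{c,d\}$. By construction the layers $a$ and $c$ both receive the edge $\cF_{i,j}$ and the layers $b$ and $d$ both receive $\cF_{s,j+r}$. As $j$ runs over $0,\dots,(t-2)/2$ the edges $\cF_{i,j}$ partition $\Z_t$, so $\Z_t\times\{a\}$ and $\Z_t\times\{c\}$ are each covered exactly once; since the translation $j\mapsto j+r$ is a bijection on edge-indices, the edges $\cF_{s,j+r}$ likewise partition $\Z_t$ and cover $\Z_t\times\{b\}$ and $\Z_t\times\{d\}$ exactly once. Hence every point of $\Z_t\times\Z_4$ appears once, the $t$ blocks form a parallel class, and each block has two points in each of two layers, so it lies in Group~3.

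For assertion~4 I would first argue distinctness: from a given class one reads off the one-factor occupying each layer, which recovers $i$ and $s$ (two layers share $\cF_i$, the other two share $\cF_s$) and the relative shift $r$, while the two configurations that occur recover the partition. Distinct parameter tuples thus give distinct parallel classes, so the number of classes is $(t-1)\cdot(t-1)\cdot \tfrac{t}{2}\cdot 3 = 3(t-1)\binom{t}{2}$. For assertions~2 and~3 I would exhibit the inverse map: a Group~3 quadruple $Q=\{(x,a),(y,a),(z,b),(v,b)\}$ lies in the partition having $\{a,b\}$ as a part; the pair in the $\cF_i$-layer determines the unique one-factor $\cF_i$ and an index $j$, the pair in the $\cF_s$-layer determines $\cF_s$ and an index $j'$, and $r\equiv j'-j \pmod{t/2}$ is forced. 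Thus each $Q$ is produced by exactly one tuple, and since the $3(t-1)\binom{t}{2}$ classes carry $3(t-1)\binom{t}{2}\cdot t = 6\binom{t}{2}^2$ quadruples, which is the total number of Group~3 quadruples, the correspondence is a bijection. When $4\mid t$, Lemma~\ref{lem:T4t0} guarantees that no Group~3 quadruple occurs in Type~1, so Type~3 alone partitions Group~3, proving assertion~2.

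The one delicate point, and the main obstacle, is assertion~3 for $t\equiv 2 \pmod 4$. Here the tuples with $i=s$ and $r=0$ force $\{z,v\}=\{x,y\}$ (an edge lies in a single one-factor), so their classes consist of the diagonal quadruples $\{(x,a),(y,a),(x,b),(y,b)\}$; for the partition $\{0,1\}\mid\{2,3\}$ these are exactly $\{(x,0),(y,0),(x,1),(y,1)\}$ and $\{(x,2),(y,2),(x,3),(y,3)\}$, which by Lemma~\ref{lem:rm22a31f4} are precisely the Group~3 quadruples already assigned to Type~1. This is the source of the ``or in the parallel classes of Type~1'' clause: every Group~3 quadruple appears in a Type~3 class by the bijection above, but these $t-1$ diagonal classes (those with $i=s$, $r=0$, partition $\{0,1\}\mid\{2,3\}$) duplicate Type~1. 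I expect the real work to be checking that the orientation of $\{a,b\}$ matches the Type~1 quadruples exactly, and that discarding precisely these $t-1$ duplicated classes leaves a clean partition of the remaining Group~3 quadruples---an accounting that is finalized when all five types are combined in Section~\ref{sec:main}.
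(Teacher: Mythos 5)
Your proof is correct and follows essentially the same route as the paper: the paper's own proof simply declares claims 1--3 ``readily verified'' from the construction (citing Lemmas~\ref{lem:T4t0} and~\ref{lem:T4t2}) and obtains the count as $3\cdot(t-1)^2\cdot\frac{t}{2}=3(t-1)\binom{t}{2}$, which is exactly your product, so your bijection argument just fills in the details the paper leaves to the reader. Your delicate point about $t\equiv 2\pmod 4$ --- that the $t-1$ diagonal classes ($i=s$, $r=0$, partition $\{0,1\}\mid\{2,3\}$) consist precisely of the Group~3 quadruples already used in Type~1 and must be discarded --- is exactly what the paper records separately in Lemma~\ref{lem:T22_t2} and accounts for (subtracting $t-1$) in Case~4 of Section~\ref{sec:main}.
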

\begin{proof}
Combining Lemma~\ref{lem:T4t0} and Lemma~\ref{lem:T4t2}, the first three claims are readily
verified from the definition of the blocks in the construction.
Since, in the definition of the construction, there are three options for $a,b,c,d$ and for each such option the triple $(i,s,r)$
has $(t-1)^2 \frac{t}{2}$ different choices, it follows that there are
$3(t-1) \binom{t}{2}$ defined parallel classes of Type 3 in the construction of $\dP$.
\end{proof}

\begin{lemma}
\label{lem:T22_t2}
The set of quadruples from Group 3 which are used in Type 1 when $t \equiv 2~(\text{mod}~4)$,
$$
\{ \{ (x,i),(y,i),(x,i+1),(y,i+1) \} ~:~ x,y \in \Z_t ,~ i=0,2 \}  ~,
$$
form $t-1$ parallel classes of Type 3.
\end{lemma}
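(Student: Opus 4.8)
The $(t-1)t$ quadruples from Group~3 that are used inside the Type~1 parallel classes when $t\equiv 2\pmod 4$, namely
\[
Q \;=\; \{\, \{(x,i),(y,i),(x,i+1),(y,i+1)\} ~:~ x,y\in\Z_t,\ i=0,2\,\},
\]
can themselves be organized into $t-1$ parallel classes, each of which is a parallel class of Type~3 (i.e.\ matches the construction of Section~\ref{sec:T22even}).

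**Plan of the proof.** The plan is to exhibit an explicit partition of $Q$ into $t-1$ parallel classes and then check (a) that each piece is genuinely a parallel class on $\Z_t\times\Z_4$, and (b) that each piece arises as one of the Type~3 parallel classes defined in Lemma~\ref{lem:T22even}. The key observation driving everything is that the quadruples in $Q$ split according to the value $i\in\{0,2\}$: a quadruple with $i=0$ lives entirely on coordinates $\{0,1\}$ of $\Z_4$, and a quadruple with $i=2$ lives entirely on coordinates $\{2,3\}$. Moreover, each such quadruple uses exactly two points $x,y\in\Z_t$, fully occupying both of their copies in the relevant two layers. So the $i=0$ part of $Q$ is in bijection with the edge set $\binom{\Z_t}{2}$ of $K_t$ (the quadruple $\{(x,0),(y,0),(x,1),(y,1)\}$ corresponds to the edge $\{x,y\}$), and likewise for the $i=2$ part.

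**First step: invoke the one-factorization of $K_t$.** Since $t$ is even, fix the one-factorization $\cF=\{\cF_0,\dots,\cF_{t-2}\}$ of $K_t$ used in Section~\ref{sec:T22even}. For each fixed $\ell$ with $0\le\ell\le t-2$, define
\[
\cR^{(\ell)} \;\triangleq\; \{\, \{(x,0),(y,0),(x,1),(y,1)\} : \{x,y\}\in\cF_\ell \,\}\ \cup\ \{\, \{(x,2),(y,2),(x,3),(y,3)\} : \{x,y\}\in\cF_\ell \,\}.
\]
Because $\cF_\ell$ is a perfect matching of $K_t$, its edges partition $\Z_t$ into $t/2$ pairs; hence the $i=0$ quadruples above cover every point of $\Z_t\times\{0,1\}$ exactly once, and the $i=2$ quadruples cover every point of $\Z_t\times\{2,3\}$ exactly once. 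Thus $\cR^{(\ell)}$ is a partition of $\Z_t\times\Z_4$ into $t$ disjoint quadruples, i.e.\ a parallel class. Since the $\cF_\ell$ are pairwise disjoint and cover all of $\binom{\Z_t}{2}$, the collection $\{\cR^{(\ell)}\}_{\ell=0}^{t-2}$ partitions $Q$ into exactly $t-1$ parallel classes, which is the claimed count.

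**Second step: match these to Type~3 parallel classes.** It remains to verify that each $\cR^{(\ell)}$ coincides with one of the parallel classes defined in Section~\ref{sec:T22even}. In that construction the free parameters are $i,s\in\{0,\dots,t-2\}$, the shift $r\in\{0,\dots,(t-2)/2\}$, and the quadruple of layers $(a,b,c,d)$ with $\{a,b\}$ one of $\{0,1\},\{0,2\},\{0,3\}$. I would specialize to $i=s=\ell$, $r=0$, and $(a,b,c,d)=(0,0,\dots)$—more precisely, choose $a=b=0$, forcing each generating pair $\{x,y\}=\cF_{\ell,j}$ to produce $\{(x,0),(y,0),(x,?),(y,?)\}$; the intended specialization is the one for which the two layers used by each block are $\{0,1\}$ in the first half of blocks and $\{2,3\}$ in the second, matching the two halves of $\cR^{(\ell)}$. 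The main thing to check is simply that with $i=s$ and $r=0$ one has $\cF_{i,j}=\cF_{s,j+r}$, so that $\{x,y\}=\{z,v\}$ and the two "crossed'' blocks of the Type~3 recipe collapse to the single-pair quadruples appearing in $Q$. This reduces the verification to a direct comparison of the two block-lists.

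**Expected main obstacle.** The routine parts—counting and checking that each $\cR^{(\ell)}$ is a parallel class—are immediate from the one-factorization property. The genuinely delicate point is the \emph{bookkeeping of layers} in the second step: the Type~3 construction of Section~\ref{sec:T22even} was written to produce configuration-$(2,2,0,0)$-type quadruples spanning \emph{four} distinct layers via the pattern $(a,b,c,d)$ with $|\{a,b,c,d\}|=4$, whereas the quadruples of $Q$ occupy only \emph{two} layers each (either $\{0,1\}$ or $\{2,3\}$). Thus I do not expect a literal instance of the Section~\ref{sec:T22even} template to reproduce $Q$ unchanged; rather, the right reading is that $Q$ is exactly the degenerate stratum $\{z,v\}=\{x,y\}$ that the Type~3 recipe \emph{omits} (since there $a,b,c,d$ would not be forced distinct), and Lemma~\ref{lem:T22_t2} asserts these leftover pieces still tile as parallel classes of the same combinatorial shape. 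So the crux is to make precise, and then verify, that setting $i=s$, $r=0$ in the generating rule yields precisely the pairs-matched-to-themselves quadruples of $Q$, and that the one-factor index $\ell$ supplies exactly the $t-1$ classes needed.
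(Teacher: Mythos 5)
Your first step is exactly the paper's proof: the paper likewise takes each one-factor $\cF_j$ of a one-factorization of $K_t$ and forms the parallel class $\{\{(x,i),(y,i),(x,i+1),(y,i+1)\} : \{x,y\}\in\cF_j,\ i=0,2\}$, obtaining $t-1$ parallel classes from the $t-1$ one-factors, so your proposal is correct and essentially identical. One small correction to your ``expected obstacle'': the Section~\ref{sec:T22even} template does \emph{not} omit the degenerate stratum --- the condition $|\{a,b,c,d\}|=4$ constrains only the layers, not the point pairs, so the literal instance $i=s$, $r=0$, $\{a,b\}=\{0,1\}$, $\{c,d\}=\{2,3\}$ reproduces your $\cR^{(\ell)}$ exactly (which is precisely why Case~4 of the main proof subtracts these $t-1$ classes from the Type~3 count).
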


\begin{proof}
The number of all the pairs $\{x,y\}$ with $x, y \in \Z_t$, $x \neq y$, is $\binom{t}{2}$.
Consider a one-factorization $\cF$ of~$K_t$ and a given one-factor
$\cF_j$, $0\leq j\leq t-2$, one can form one parallel class
$$
\{ \{ (x,i),(y,i),(x,i+1),(y,i+1) \} ~:~ \{x,y\} \in \cF_j,~ i=0,2 \}  ~,
$$
Hence, the $t-1$ one-factors of $\cF$ induce $t-1$ parallel classes of Type 3.
\end{proof}

\subsubsection{Type 3 for odd $t$}
\label{sec:T22odd}

For odd $t$ a near-one-factorization $\cF =\{ \cF_0,\cF_1,\ldots,\cF_{t-1} \}$ of $K_t$ is required.
Let $\cG_1$, $\cG_2$, $\cG_3$ be three pairwise disjoint sets, each one of size $\frac{t-1}{2} t^2$,
of quadruples from configuration (1,1,1,1) with the following properties. Each pair $\{ (x,0),(y,1) \}$
and each pair $\{ (x,2),(y,3) \}$, $x,y \in \Z_t$, is contained in exactly $\frac{t-1}{2}$ quadruples
of~$\cG_1$. Each pair $\{ (x,0),(y,3) \}$
and each pair $\{ (x,1),(y,2) \}$, $x,y \in \Z_t$, is contained in exactly $\frac{t-1}{2}$ quadruples
of~$\cG_2$. Each pair $\{ (x,0),(y,2) \}$
and each pair $\{ (x,1),(y,3) \}$, $x,y \in \Z_t$, is contained in exactly $\frac{t-1}{2}$ quadruples
of~$\cG_3$.

For a quadruple $B = \{ (i,0),(j,1),(k,2),(m,3) \}$ of $\cG_1$,
assume that $B$ contains the $r$-th appearance of the pair $\{ (i,0),(j,1) \}$ in $\cG_1$ and the $s$-th appearance
of the pair $\{ (k,2),(m,3) \}$ in $\cG_1$. Define the set of quadruples which contains $B$ and the quadruples
of the following sets.
$$
\{  \{ (x ,0),(y,0),(z,1),(v,1) \}  ~:~
\{ x,y \} = \cF_{i,p}, ~\{z,v\} = \cF_{j,p+r}, ~  0 \leq p \leq \frac{t-3}{2} \}~.
$$
$$
\{  \{ (x ,2),(y,2),(z,3),(v,3) \}  ~:~
\{ x,y \} = \cF_{k,p}, ~\{z,v\} = \cF_{m,p+s}, ~ 0 \leq p \leq \frac{t-3}{2} \}~.
$$

For a quadruple $B = \{ (i,0),(j,1),(k,2),(m,3) \}$ of $\cG_2$,
assume that $B$ contains the $r$-th appearance of the pair $\{ (i,0),(m,3) \}$ in $\cG_2$ and the $s$-th appearance
of the pair $\{ (j,1),(k,2) \}$ in $\cG_2$. Define the set of quadruples which contains $B$ and the quadruples
of the following sets.
$$
\{  \{ (x ,0),(y,0),(z,3),(v,3) \}  ~:~
\{ x,y \} = \cF_{i,p}, ~\{z,v\} = \cF_{m,p+r}, ~  0 \leq p \leq \frac{t-3}{2} \}~.
$$
$$
\{  \{ (x ,1),(y,1),(z,2),(v,2) \}  ~:~
\{ x,y \} = \cF_{j,p}, ~\{z,v\} = \cF_{k,p+s}, ~ 0 \leq p \leq \frac{t-3}{2} \}~.
$$

For a quadruple $B = \{ (i,0),(j,1),(k,2),(m,3) \}$ of $\cG_3$,
assume that $B$ contains the $r$-th appearance of the pair $\{ (i,0),(k,2) \}$ in $\cG_3$ and the $s$-th appearance
of the pair $\{ (j,1),(m,3) \}$ in $\cG_3$. Define the set of quadruples which contains $B$ and the quadruples
of the following sets.
$$
\{  \{ (x ,0),(y,0),(z,2),(v,2) \}  ~:~
\{ x,y \} = \cF_{i,p}, ~\{z,v\} = \cF_{k,p+r}, ~  0 \leq p \leq \frac{t-3}{2} \}~.
$$
$$
\{  \{ (x ,1),(y,1),(z,3),(v,3) \}  ~:~
\{ x,y \} = \cF_{j,p}, ~\{z,v\} = \cF_{m,p+s}, ~ 0 \leq p \leq \frac{t-3}{2} \}~.
$$

The following result can be readily verified.
\begin{lemma}
\label{lem:T22odd}
$~$
\begin{enumerate}
\item For each block $B = \{ (i,0),(j,1),(k,2),(m,3) \}$ in $\cG_1$, $\cG_2$ or $\cG_3$,
the $t$ defined quadruples form a parallel class with $t-1$ quadruples from Group 3 and one quadruple from Group 5.

\item If $t$ is odd, then each quadruple from the configurations of Group 3 appears either in one
of the defined parallel classes or in the parallel classes of Type 1.

\item The number of defined parallel classes of Type 3 for odd $t$ is $3 t \binom{t}{2}$.
\end{enumerate}
\end{lemma}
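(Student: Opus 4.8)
The plan is to verify the three claims directly from the construction, treating $\cG_1$, $\cG_2$, $\cG_3$ symmetrically, since the three cases differ only by which pair of coordinate-classes is ``split'' by the near-one-factorization. I will present the argument for a block $B=\{(i,0),(j,1),(k,2),(m,3)\}\in\cG_1$; the arguments for $\cG_2$ and $\cG_3$ are identical after permuting the roles of the coordinates $0,1,2,3$.

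First I would establish claim (1), that the $t$ quadruples attached to $B$ form a parallel class covering $\Z_t\times\Z_4$ exactly once. The block $B$ itself is the single quadruple from Group 5 (configuration $(1,1,1,1)$), consuming one point from each coordinate-class, namely $(i,0),(j,1),(k,2),(m,3)$. The remaining $t-1$ quadruples split into two families. The first family, built from $\{x,y\}=\cF_{i,p}$ and $\{z,v\}=\cF_{j,p+r}$ for $0\le p\le\frac{t-3}{2}$, lives entirely in $\Z_t\times\{0,1\}$ and has configuration $(2,2,0,0)$, hence lies in Group 3; the second family, built from $\cF_{k,p}$ and $\cF_{m,p+s}$, lives in $\Z_t\times\{2,3\}$ with configuration $(0,0,2,2)$, again Group 3. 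Recall from Section~\ref{sec:one_near} that in the near-one-factor $\cF_i$ the vertex $i$ is isolated; thus as $p$ ranges over $0,\ldots,\frac{t-3}{2}$ the pairs $\{x,y\}=\cF_{i,p}$ cover every element of $\Z_t\setminus\{i\}$ in coordinate-class $0$ exactly once, and the offset $p+r$ applied to $\cF_j$ (read modulo $t$, so that all $t$ near-one-factors are cycled through) covers every element of $\Z_t\setminus\{j\}$ in coordinate-class $1$ exactly once. Together with $(i,0)$ and $(j,1)$ from $B$, the first family plus $B$ partitions $\Z_t\times\{0,1\}$; symmetrically the second family plus $B$ partitions $\Z_t\times\{2,3\}$. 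Hence all $t$ quadruples are pairwise disjoint and their union is exactly $\Z_t\times\Z_4$, giving a parallel class with one Group-5 quadruple and $t-1$ Group-3 quadruples.

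Next I would prove claim (2), that every Group-3 quadruple of $\dP$ appears exactly once among these parallel classes together with the Type-1 classes. A Group-3 quadruple has configuration with two coordinate-classes of size $2$; the six configurations of Group 3 split according to which pair of classes is used, and the three sets $\cG_1,\cG_2,\cG_3$ are tailored to the three ``adjacent'' splittings $\{0,1\}\!+\!\{2,3\}$, $\{0,3\}\!+\!\{1,2\}$, $\{0,2\}\!+\!\{1,3\}$ so that every Group-3 quadruple is handled. The counting property imposed on $\cG_1$ — that each pair $\{(x,0),(y,1)\}$ and each pair $\{(x,2),(y,3)\}$ occurs in exactly $\frac{t-1}{2}$ quadruples of $\cG_1$ — is exactly what is needed so that, as $B$ ranges over $\cG_1$ and $p$ ranges over its values, each Group-3 quadruple of configuration $(2,2,0,0)$ arises from a uniquely determined triple $(B,p)$; the offset bookkeeping via the $r$-th and $s$-th appearances guarantees that the cyclic shift $p+r$ hits each near-one-factor index the correct number of times, so no Group-3 quadruple is produced twice and none is missed, after accounting for those already used in Type 1. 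For claim (3), the number of parallel classes of Type 3 for odd $t$ is simply $|\cG_1|+|\cG_2|+|\cG_3|=3\cdot\frac{t-1}{2}t^2=3t\binom{t}{2}$, one class per block of the three sets.

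The main obstacle I anticipate is claim (2): making rigorous that the combination of the prescribed multiplicities on $\cG_1,\cG_2,\cG_3$ with the cyclic-offset indexing of the near-one-factors yields a genuine \emph{partition} of the Group-3 quadruples (every such quadruple once, never twice), rather than merely a cover of the right cardinality. The cleanest route is a double-counting check: verify that the total number of Group-3 quadruples produced, namely $3t\binom{t}{2}\cdot(t-1)$, together with those consumed in Type 1, matches the total count of Group-3 quadruples $\sum\binom{t}{2}^2$ over the six configurations; combined with the ``at most once'' injectivity argument above, this forces exact coverage. The existence of the sets $\cG_1,\cG_2,\cG_3$ with the stated intersection properties is assumed as given data for this lemma, so I would not prove it here.
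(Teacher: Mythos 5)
Your overall strategy---verify claim (1) directly, prove claim (2) by an ``at most once'' injectivity argument completed by double counting, and obtain claim (3) as one class per block of $\cG_1\cup\cG_2\cup\cG_3$---is sound, and since the paper dismisses this lemma with ``can be readily verified,'' your write-up is essentially the verification the authors leave to the reader. Claims (1) and (3) go through, except for one misreading you must fix: in $\cF_{j,p+r}$ the second subscript indexes the pairs \emph{within} the fixed near-one-factor $\cF_j$, so $p+r$ is reduced modulo $\frac{t-1}{2}$, not modulo $t$, and no near-one-factors are ``cycled through.'' Your stated conclusion (that the pairs $\cF_{j,p+r}$, $0\le p\le \frac{t-3}{2}$, sweep out all of $\cF_j$ and hence cover $\Z_t\setminus\{j\}$ in coordinate-class $1$ exactly once) is correct, but it follows only under the correct reading of the subscript; under the reading you wrote down it would be false.

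The genuine gap is in your closing double count for claim (2). The defined Type-3 classes alone produce $3t\binom{t}{2}\cdot(t-1)=\frac{3}{2}t^2(t-1)^2$ Group-3 quadruples counted with multiplicity, and this \emph{already equals} the total number $6\binom{t}{2}^2=\frac{3}{2}t^2(t-1)^2$ of Group-3 quadruples. Combined with your injectivity argument (a quadruple $\{(x,0),(y,0),(z,1),(v,1)\}$ determines $i,p$ from $\{x,y\}=\cF_{i,p}$ and $j,q$ from $\{z,v\}=\cF_{j,q}$, hence a unique admissible $r\equiv q-p \pmod{\frac{t-1}{2}}$, realized by exactly one of the $\frac{t-1}{2}$ blocks of $\cG_1$ containing the pair $\{(i,0),(j,1)\}$), this shows the defined classes cover every Group-3 quadruple \emph{exactly once}---including those that also occur in Type 1 when $t\equiv 1\pmod 4$. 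Consequently your proposed balance ``Type-3 production plus Type-1 consumption equals the total'' is false for $t\equiv 1\pmod 4$ (it overshoots by $3t(t-1)$), and the phrase ``after accounting for those already used in Type 1'' wrongly suggests the construction avoids those quadruples. It does not: the overlap is real, and the paper resolves it not in this lemma but via Lemma~\ref{lem:T22_t1} together with the specific choice of $\cG_1,\cG_2,\cG_3$ made in Section~\ref{sec:main} (Corollaries~\ref{cor:Gsat9} and~\ref{cor:rmT3T5}), which guarantees that the doubly-covered quadruples constitute $3t$ entire Type-3 classes that are then discarded from the final enumeration. Since claim (2) as literally stated only asserts coverage, exact coverage by the defined classes already implies it, so your proof is repaired simply by dropping the Type-1 term from the count and leaving the disjointness bookkeeping to the later lemmas.
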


\begin{lemma}
\label{lem:T22_t1}
The set of quadruples from Group 3 contained in the parallel classes of Type 1 when $t \equiv 1(\text{mod}~4)$,
$$
\{ \{ (x,i),(y,i),(x,i+1),(y,i+1) \} ~:~ x,y \in \Z_t ,~ i \in \Z_4 \}  ~,
$$
$$
\{ \{ (x,i),(y,i),(x,i+2),(y,i+2) \} ~:~ x,y \in \Z_t ,~ i \in \{ 0,1 \} \}  ~,
$$
form $3t$ parallel classes of Type 3. Each one of these $3t$ parallel classes contains
exactly one quadruple from configuration $(1,1,1,1)$.
\end{lemma}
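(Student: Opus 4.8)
The plan is to unpack the listed set using Lemma~\ref{lem:rm22a31a1111f4} and then to realize these quadruples through the near-one-factorization machinery that underlies the Type~3 construction for odd $t$. First I would observe that, by claim~2 of Lemma~\ref{lem:rm22a31a1111f4}, every quadruple in the two displayed sets is a \emph{diagonal} quadruple of the form $\{(x,a),(y,a),(x,b),(y,b)\}$, determined by an unordered pair $\{x,y\}\subseteq\Z_t$ together with a coordinate-pair $\{a,b\}\subseteq\Z_4$; as $\{a,b\}$ ranges over all six two-subsets of $\Z_4$ and $\{x,y\}$ over all $\binom{t}{2}$ pairs, this gives exactly $6\binom{t}{2}=3t(t-1)$ quadruples, matching the count in Lemma~\ref{lem:rm22a31a1111f4}. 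I would then group the six coordinate-pairs into the three complementary families $\{\{0,1\},\{2,3\}\}$, $\{\{0,3\},\{1,2\}\}$, and $\{\{0,2\},\{1,3\}\}$, which correspond to $\cG_1$, $\cG_2$, and $\cG_3$, respectively, in the Type~3 odd-$t$ construction.

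Next I would build the parallel classes family by family, using the near-one-factorization $\cF=\{\cF_0,\ldots,\cF_{t-1}\}$ of $K_t$ on $\Z_t$, where $\cF_\ell$ has isolated vertex $\ell$ and its $\frac{t-1}{2}$ edges partition $\Z_t\setminus\{\ell\}$. For the family $\{\{0,1\},\{2,3\}\}$ and each $\ell\in\Z_t$, I would take the diagonal quadruples $\{(x,0),(y,0),(x,1),(y,1)\}$ with $\{x,y\}\in\cF_\ell$ (covering slices $0,1$ except $(\ell,0),(\ell,1)$), together with the diagonal quadruples $\{(x,2),(y,2),(x,3),(y,3)\}$ with $\{x,y\}\in\cF_{\ell+1}$ (covering slices $2,3$ except $(\ell+1,2),(\ell+1,3)$), and complete the class with the single Group~5 quadruple $\{(\ell,0),(\ell,1),(\ell+1,2),(\ell+1,3)\}$. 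This is precisely an instance of the Type~3 odd-$t$ parallel class of Lemma~\ref{lem:T22odd} for a block $B\in\cG_1$ with $i=j=\ell$, $k=m=\ell+1$, and zero offsets. The analogous recipes for the other two families use $\cF_\ell$ on $\{0,3\}$ and $\cF_{\ell+1}$ on $\{1,2\}$ (completed by $\{(\ell,0),(\ell+1,1),(\ell+1,2),(\ell,3)\}$), and $\cF_\ell$ on $\{0,2\}$ and $\cF_{\ell+1}$ on $\{1,3\}$ (completed by $\{(\ell,0),(\ell+1,1),(\ell,2),(\ell+1,3)\}$).

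It remains to verify four things, all routine. Each of the $3t$ sets is a parallel class: the $\frac{t-1}{2}+\frac{t-1}{2}$ diagonal quadruples cover every point of $\Z_t\times\Z_4$ except the four isolated-vertex points, and the appended quadruple from configuration $(1,1,1,1)$ covers exactly those four; hence each class has $t$ quadruples, $t-1$ from Group~3 and one from Group~5, so it is of Type~3. Within each family, as $\ell$ runs over $\Z_t$ the near-one-factor $\cF_\ell$ (respectively $\cF_{\ell+1}$) runs once over all near-one-factors, so by the defining property of a near-one-factorization every diagonal quadruple of that family's two coordinate-pairs is used exactly once; thus the $3t$ classes partition the entire listed set. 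Finally, I would check that the $3t$ appended Group~5 quadruples are pairwise distinct: the three quadruples attached at parameter $\ell$ have first-coordinate patterns $(\ell,\ell,\ell+1,\ell+1)$, $(\ell,\ell+1,\ell+1,\ell)$, and $(\ell,\ell+1,\ell,\ell+1)$ across slices $0,1,2,3$, which differ from one another and, since the slice-$0$ coordinate equals $\ell$, differ from those at any other parameter. The only step requiring care is this last one --- choosing the coupling $\ell\mapsto\ell+1$ (any fixed-point-free shift of $\Z_t$ works, and such a shift exists since $t>1$) so that the appended $(1,1,1,1)$ quadruples are all distinct and, in particular, avoid the constant quadruples $\{(i,0),(i,1),(i,2),(i,3)\}$ already used by Type~1 in claim~3 of Lemma~\ref{lem:rm22a31a1111f4}; this is what keeps the repackaging consistent with the rest of the Baranyai partition.
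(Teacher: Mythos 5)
Your construction is correct and proves the lemma, and at its core it is the same argument as the paper's: group the six coordinate-pairs into the three complementary families $\{0,1\}/\{2,3\}$, $\{0,3\}/\{1,2\}$, $\{0,2\}/\{1,3\}$, and for each near-one-factor assemble one class per family out of same-pair diagonal quadruples, completed by a single quadruple of configuration $(1,1,1,1)$ covering the four isolated points. The one substantive deviation is your coupling: the paper uses the \emph{same} near-one-factor $\cF_j$ on both slice-pairs of a family and completes each class with the constant quadruple $\{(j,0),(j,1),(j,2),(j,3)\}$ (so each constant is reused in three of the $3t$ classes, which is permitted --- the lemma claims neither disjointness of the classes nor any particular choice of the Group~5 quadruple), whereas you couple $\cF_\ell$ with $\cF_{\ell+1}$ and complete with shifted quadruples such as $\{(\ell,0),(\ell,1),(\ell+1,2),(\ell+1,3)\}$, making all $3t$ completions pairwise distinct. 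Both yield the stated conclusion, but your closing rationale for the shift is backwards relative to how the lemma is used. These $3t$ classes are never classes of the final partition --- their Group~3 quadruples already sit inside Type~1 classes by Lemma~\ref{lem:rm22a31a1111f4} --- so in Case~6 ($t \equiv 9~(\text{mod}~12)$) the lemma serves purely as an accounting device identifying which $3t$ of the $3t\binom{t}{2}$ Type~3 classes of Lemma~\ref{lem:T22odd} to discard. The paper chooses the constant completions \emph{deliberately}: by claim~3 of Lemma~\ref{lem:rm22a31a1111f4} (see also Corollary~\ref{cor:AinT1}) the set $\cA$ of constants is exactly the Group~5 material already covered by Type~1, so every quadruple of the discarded classes is Type-1-covered and the subtraction is clean. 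With your shifted completions, the $3t$ seed quadruples are \emph{not} covered by Type~1 and would have to be re-covered elsewhere in the partition; the paper performs an analogous reallocation for the sets $S$ of Lemma~\ref{lem:GsatG5} via Corollary~\ref{cor:rmT3T5}, but those sets do not coincide with your seeds, so your variant would force a re-derivation of that bookkeeping. In short: your proof of the lemma as literally stated is sound, but avoiding $\cA$ is not what ``keeps the repackaging consistent'' --- landing \emph{inside} $\cA$ is.
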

\begin{proof}
By Lemma \ref{lem:rm22a31a1111f4}, when $t \equiv 1(\text{mod}~4)$, the quadruples from Group 3 contained in the parallel classes of Type~1 are
$$
\{ \{ (x,i),(y,i),(x,i+1),(y,i+1) \} ~:~ x,y \in \Z_t ,~ i \in \Z_4 \}  ~,
$$
$$
\{ \{ (x,i),(y,i),(x,i+2),(y,i+2) \} ~:~ x,y \in \Z_t ,~ i \in \{ 0,1 \} \}  ~.
$$
$~$
Consider a near-one-factorization $\cF$ of $K_t$ and a given near-one-factor
$\cF_j$ of $\cF$, $0\leq j\leq t-1$, we can form three parallel classes
$$
\{ \{ (x,0),(y,0),(x,1),(y,1) \}, \{ (x,2),(y,2),(x,3),(y,3) \}  ~:~ \{x,y\} \in \cF_j,~ \}\cup \{\{ (j,0),(j,1),(j,2),(j,3)\}\}
$$
$$
\{ \{ (x,0),(y,0),(x,2),(y,2) \}, \{ (x,1),(y,1),(x,3),(y,3) \}  ~:~ \{x,y\} \in \cF_j,~ \}\cup \{\{ (j,0),(j,1),(j,2),(j,3)\}\}
$$
$$
\{ \{ (x,0),(y,0),(x,3),(y,3) \}, \{ (x,1),(y,1),(x,2),(y,2) \}  ~:~ \{x,y\} \in \cF_j,~ \}\cup \{\{ (j,0),(j,1),(j,2),(j,3)\}\}
$$
Hence, the related $t$ near-one-factors induce $3t$ parallel classes of Type~3 and
each one of these $3t$ parallel classes contains
exactly one quadruple from configuration $(1,1,1,1)$.
\end{proof}

\subsection{Parallel Classes of Type 4}
\label{sec:T211}
For Type 4,
the configurations of Group 4 are partitioned into six subsets, $\cM_1$, $\cM_2$, $\cM_3$, $\cM_4$, $\cM_5$, and $\cM_6$,
where $\cM_1 = \{ (2,1,1,0),(0,1,1,2) \}$, $\cM_2 = \{ (2,1,0,1),(0,1,2,1) \}$,
$\cM_3 = \{ (2,0,1,1),(0,2,1,1) \}$, $\cM_4 = \{ (1,2,0,1),(1,0,2,1) \}$, $\cM_5 = \{ (1,2,1,0),(1,0,1,2) \}$, and
$\cM_6 = \{ (1,1,2,0),(1,1,0,2) \}$. In the construction we distinguish between even $t$ and odd $t$.

\subsubsection{Type 4 for even $t$}
\label{sec:T211even}

Let $\cF = \{ \cF_0,\cF_1,\ldots,\cF_{t-2} \}$ be a one-factorization of $K_t$.
For $\cM_1$, let $\cU_1$ be a set with $(t-1)t^2$ quadruples defined by
$$
\cU_1 \triangleq \{ \{ (x,0),(y,1),(z,2),(x,3) \}  ~:~ x \in \Z_{t-1},~ y,z \in \Z_t \}.
$$
For each $x \in \Z_{t-1}$ and each pair $\{ (y,1),(z,2) \} \in \Z_t \times \{1,2\}$ there is exactly
one quadruple $\{ (x,0),(y,1),(z,2),(x,3) \} \in \cU_1$ which contains the triple $\{ (x,0),(y,1),(z,2) \}$ and exactly
one quadruple $\{ (x,0),(y,1),(z,2),(x,3) \} \in \cU_1$ which contains the triple $\{ (y,1),(z,2),(x,3) \}$.

Given a quadruple $X=\{ (i,0),(j,1),(k,2),(i,3) \}$ of the set $\cU_1$ we form
the following set $\cR_{i,j,k}$ in $\dP$, $i \in \Z_{t-1}$, $j,k \in \Z_t$, which contains $t$ quadruples
\begin{footnotesize}
$$
\{\{(x,0),(y,0),(j+r,1),(k+r,2)\},\{(x,3),(y,3),(j-r-1,1),(k-r-1,2)\}~~:~~ \{x,y\}=\cF_{i,r}, 0\leq r\leq \frac{t-2}{2}\}.
$$
\end{footnotesize}

For $\cM_i$, $2 \leq i \leq 6$ a set $\cU_i$ is defined similarly as follows.

$$
\cU_2 \triangleq \{ \{ (x,0),(y,1),(x,2),(z,3) \}  ~:~ x \in \Z_{t-1},~ y,z \in \Z_t \}.
$$
For each $x \in \Z_{t-1}$ and each pair $\{ (y,1),(z,3) \} \in \Z_t \times \{1,3\}$ there is exactly
one quadruple $\{ (x,0),(y,1),(x,2),(z,3) \} \in \cU_2$ which contains the triple $\{ (x,0),(y,1),(z,3) \}$ and exactly
one quadruple $\{ (x,0),(y,1),(x,2),(z,3) \} \in \cU_2$ which contains the triple $\{ (y,1),(x,2),(z,3) \}$.

$$
\cU_3 \triangleq \{ \{ (x,0),(x,1),(y,2),(z,3) \}  ~:~ x \in \Z_{t-1},~ y,z \in \Z_t \}.
$$
For each $x \in \Z_{t-1}$ and each pair $\{ (y,2),(z,3) \} \in \Z_t \times \{2,3\}$ there is exactly
one quadruple $\{ (x,0),(x,1),(y,2),(z,3) \} \in \cU_3$ which contains the triple $\{ (x,0),(y,2),(z,3) \}$ and exactly
one quadruple $\{ (x,0),(x,1),(y,2),(z,3) \} \in \cU_3$ which contains the triple $\{ (x,1),(y,2),(z,3) \}$.

$$
\cU_4 \triangleq \{ \{ (y,0),(x,1),(x,2),(z,3) \}  ~:~ x \in \Z_{t-1},~ y,z \in \Z_t \}.
$$
For each $x \in \Z_{t-1}$ and each pair $\{ (y,0),(z,3) \} \in \Z_t \times \{0,3\}$ there is exactly
one quadruple $\{ (y,0),(x,1),(x,2),(z,3) \} \in \cU_4$ which contains the triple $\{ (y,0),(x,1),(z,3) \}$ and exactly
one quadruple $\{ (y,0),(x,1),(x,2),(z,3) \} \in \cU_4$ which contains the triple $\{ (y,0),(x,2),(z,3) \}$.

$$
\cU_5 \triangleq \{ \{ (y,0),(x,1),(z,2),(x,3) \}  ~:~ x \in \Z_{t-1},~ y,z \in \Z_t \}.
$$
For each $x \in \Z_{t-1}$ and each pair $\{ (y,0),(z,2) \} \in \Z_t \times \{0,2\}$ there is exactly
one quadruple $\{ (y,0),(x,1),(z,2),(x,3) \} \in \cU_5$ which contains the triple $\{ (y,0),(x,1),(z,2) \}$ and exactly
one quadruple $\{ (y,0),(x,1),(z,2),(x,3) \} \in \cU_5$ which contains the triple $\{ (y,0),(z,2),(x,3) \}$.

$$
\cU_6 \triangleq \{ \{ (y,0),(z,1),(x,2),(x,3) \}  ~:~ x \in \Z_{t-1},~ y,z \in \Z_t \}.
$$
For each $x \in \Z_{t-1}$ and each pair $\{ (y,0),(z,1) \} \in \Z_t \times \{0,1\}$ there is exactly
one quadruple $\{ (y,0),(z,1),(x,2),(x,3) \} \in \cU_6$ which contains the triple $\{ (y,0),(z,1),(x,2) \}$ and exactly
one quadruple $\{ (y,0),(z,1),(x,2),(x,3) \} \in \cU_6$ which contains the triple $\{ (y,0),(z,1),(x,3) \}$.

Similarly to the definition of $\cR_{i,j,k}$ for a quadruple $\{ (i,0),(j,1),(k,2),(i,3) \} \in \cU_1$,
a related set of quadruple is defined for each element of $\cU_2$ (also $\cU_3$, $\cU_4$, $\cU_5$, and $\cU_6$),
where the definition is changed related to the positions of the 2's in the configurations of $\cM_2$
($\cM_3$, $\cM_4$, $\cM_5$, and $\cM_6$, respectively).

\begin{lemma}
\label{lem:T211even}
$~$
\begin{enumerate}
\item Each set $\cR_{i,j,k}$ defined by $\cU_1$ is a parallel class of Type 4.

\item Each quadruple from configurations $(2,1,1,0)$ and $(0,1,1,2)$ is contained in exactly one
of the defined parallel classes.

\item The total number of parallel classes of Type 4 (for $\bigcup_{i=1}^6 \cU_i$) is $6 (t-1) t^2$.
Each such parallel class contains only quadruples from Group 4. It contains $t$ quadruples from Group 4.
\end{enumerate}
\end{lemma}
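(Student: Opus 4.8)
The plan is to verify the three claims of Lemma~\ref{lem:T211even} by analyzing the structure of the set $\cR_{i,j,k}$ built from a quadruple $X=\{(i,0),(j,1),(k,2),(i,3)\}\in\cU_1$ together with the one-factorization $\cF$ of $K_t$. First I would establish Claim~1, that $\cR_{i,j,k}$ is a parallel class, by a point-counting argument on $\Z_t\times\Z_4$. The $t$ defined quadruples contain $4t$ point-slots, so it suffices to check that every point $(a,\ell)$ with $a\in\Z_t$, $\ell\in\Z_4$ occurs exactly once. Fix the row $i$ and run $r$ over $0\le r\le\frac{t-2}{2}$: the pairs $\{x,y\}=\cF_{i,r}$ partition $\Z_t$, so coordinates of level $0$ (namely $(x,0),(y,0)$) and level $3$ (namely $(x,3),(y,3)$) each sweep out all of $\Z_t$ exactly once as $r$ varies. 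For levels $1$ and $2$, the entries $(j+r,1),(k+r,2)$ and $(j-r-1,1),(k-r-1,2)$ must also cover $\Z_t$ exactly once each; since $r$ and $-r-1$ together range over all residues mod $t$ as $r$ runs through $0,\dots,\frac{t-2}{2}$, the shifts $j+r, j-r-1$ cover $\Z_t$ once, and likewise $k+r,k-r-1$. Each defined quadruple sits in configuration $(2,0,1,1)$ or $(0,2,1,1)$ depending on the block, both of which lie in Group~4, giving Type~4.

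Next I would prove Claim~2, that each quadruple from configurations $(2,1,1,0)$ and $(0,1,1,2)$ appears in exactly one parallel class $\cR_{i,j,k}$. A quadruple of configuration $(2,1,1,0)$ has the form $\{(x,0),(y,0),(u,1),(v,2)\}$; I would show it is produced by a unique triple $(i,j,k)$ and a unique $r$. Indeed, the level-$0$ pair $\{x,y\}$ determines the factor $\cF_{i,r}$ it belongs to, hence determines $i$ (the factor containing $\{x,y\}$) and the index $r$; then $u=j+r$ forces $j=u-r$ and $v=k+r$ forces $k=v-r$. Conversely, each choice of $(i,j,k)$ and $r$ yields one such quadruple. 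A symmetric bookkeeping handles the level-$3$ half giving the $(0,1,1,2)$ configuration. The key counting check is that the total number of quadruples produced in these two configurations matches $\prod_i\binom{t}{j_i}$ for each, namely $t^2\binom{t}{2}$, which should follow from the fact that there are $(t-1)t^2$ quadruples in $\cU_1$ and each contributes $t$ quadruples to $\cR_{i,j,k}$.

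For Claim~3 I would count parallel classes. By the displayed definition, $\cU_1$ has $(t-1)t^2$ quadruples, and each quadruple $X\in\cU_1$ yields exactly one parallel class $\cR_{i,j,k}$, so $\cU_1$ contributes $(t-1)t^2$ parallel classes of Type~4. By the symmetry of the six definitions---the sets $\cU_2,\dots,\cU_6$ are defined by permuting the positions of the repeated coordinate according to $\cM_2,\dots,\cM_6$---each $\cU_i$ likewise contributes $(t-1)t^2$ parallel classes, for a total of $6(t-1)t^2$. That every such class contains only Group~4 quadruples, and indeed $t$ of them, is immediate from Claim~1 applied to each of the six families. I would remark that the entire argument relies on the even parity of $t$ only through the fact that $\cF$ is an honest one-factorization (each $\cF_{i,r}$ is a perfect matching), so all the coverage counts are exact; the odd case is deferred to Section~\ref{sec:T211} via near-one-factorizations.

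The main obstacle I anticipate is Claim~2: verifying that the two triples $\{(x,0),(y,0),(u,1),(v,2)\}$ and $\{(x,3),(y,3),(u',1),(v',2)\}$ coming from the \emph{same} value of $r$ in a single $\cR_{i,j,k}$ do not accidentally coincide with quadruples produced by a different $(i',j',k')$, and that the shift conventions $r$ versus $-r-1$ genuinely induce a partition rather than a mere cover. This requires care that the map $r\mapsto(j+r,k+r)$ and $r\mapsto(j-r-1,k-r-1)$ are injective on $\{0,\dots,\frac{t-2}{2}\}$ and that their images are disjoint, which is where the precise modular arithmetic must be pinned down.
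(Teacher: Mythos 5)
Your proposal is correct and follows essentially the same route as the paper, whose proof simply declares Claims 1 and 2 ``easily verified'' and obtains Claim 3 by the same count you give: $\cU_1$ has $(t-1)t^2$ quadruples (as $x\in\Z_{t-1}$, $y,z\in\Z_t$), each yielding one parallel class, times six families. One small slip: in your Claim 1 discussion the quadruples of $\cR_{i,j,k}$ lie in configurations $(2,1,1,0)$ and $(0,1,1,2)$ (the set $\cM_1$), not $(2,0,1,1)$ and $(0,2,1,1)$ (which form $\cM_3$); this is inconsequential since both pairs lie in Group 4, and you name the configurations correctly in your Claim 2 argument.
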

\begin{proof}
Claim 1 and 2 can be easily verified.

For Claim 3, given any quadruples $X=\{(i,0),(j,1),(k,2),(i,3)\}$ of the set $\cU_1$ where $i\in \Z_{t-1}$ and $j,k\in \Z_t$,
one-factor of $\cF_i$ of $K_t$ is used to form a set $\cR_{i,j,k}$ in $\dP$.
It is easy to verify that $\cR_{i,j,k}$ is a parallel class of Type 4 in $\dP$. Hence, there are $(t-1) t^2$ parallel classes of Type 4
obtained from the set $\cU_1$. Similarly, for each $\cU_i$, $2\leq i\leq 6$, there are  $(t-1) t^2$ parallel classes of Type 4.
Thus, there are $6(t-1) t^2$ parallel classes of Type 4.
\end{proof}

\begin{example}
\label{ex:T211even}

For $t=4$ the set $\cU_1$ contains the following 48 quadruples
\begin{footnotesize}
$$
\{ (0,0),(0,1),(0,2),(0,3) \},\{ (0,0),(0,1),(1,2),(0,3) \},\{ (0,0),(0,1),(2,2),(0,3) \}\{ (0,0),(0,1),(3,2),(0,3) \},
$$
$$
\{ (0,0),(1,1),(0,2),(0,3) \},\{ (0,0),(1,1),(1,2),(0,3) \},\{ (0,0),(1,1),(2,2),(0,3) \}\{ (0,0),(1,1),(3,2),(0,3) \},
$$
$$
\{ (0,0),(2,1),(0,2),(0,3) \},\{ (0,0),(2,1),(1,2),(0,3) \},\{ (0,0),(2,1),(2,2),(0,3) \}\{ (0,0),(2,1),(3,2),(0,3) \},
$$
$$
\{ (0,0),(3,1),(0,2),(0,3) \},\{ (0,0),(3,1),(1,2),(0,3) \},\{ (0,0),(3,1),(2,2),(0,3) \}\{ (0,0),(3,1),(3,2),(0,3) \},
$$
$$
\{ (1,0),(0,1),(0,2),(1,3) \},\{ (1,0),(0,1),(1,2),(1,3) \},\{ (1,0),(0,1),(2,2),(1,3) \}\{ (1,0),(0,1),(3,2),(1,3) \},
$$
$$
\{ (1,0),(1,1),(0,2),(1,3) \},\{ (1,0),(1,1),(1,2),(1,3) \},\{ (1,0),(1,1),(2,2),(1,3) \}\{ (1,0),(1,1),(3,2),(1,3) \},
$$
$$
\{ (1,0),(2,1),(0,2),(1,3) \},\{ (1,0),(2,1),(1,2),(1,3) \},\{ (1,0),(2,1),(2,2),(1,3) \}\{ (1,0),(2,1),(3,2),(1,3) \},
$$
$$
\{ (1,0),(3,1),(0,2),(1,3) \},\{ (1,0),(3,1),(1,2),(1,3) \},\{ (1,0),(3,1),(2,2),(1,3) \}\{ (1,0),(3,1),(3,2),(1,3) \},
$$
$$
\{ (2,0),(0,1),(0,2),(2,3) \},\{ (2,0),(0,1),(1,2),(2,3) \},\{ (2,0),(0,1),(2,2),(2,3) \}\{ (2,0),(0,1),(3,2),(2,3) \},
$$
$$
\{ (2,0),(1,1),(0,2),(2,3) \},\{ (2,0),(1,1),(1,2),(2,3) \},\{ (2,0),(1,1),(2,2),(2,3) \}\{ (2,0),(1,1),(3,2),(2,3) \},
$$
$$
\{ (2,0),(2,1),(0,2),(2,3) \},\{ (2,0),(2,1),(1,2),(2,3) \},\{ (2,0),(2,1),(2,2),(2,3) \}\{ (2,0),(2,1),(3,2),(2,3) \},
$$
$$
\{ (2,0),(3,1),(0,2),(2,3) \},\{ (2,0),(3,1),(1,2),(2,3) \},\{ (2,0),(3,1),(2,2),(2,3) \}\{ (2,0),(3,1),(3,2),(2,3) \}.
$$
\end{footnotesize}

The first 8 quadruples of $\cU_1$ with the one-factorization $\cF=\{\cF_0,\cF_1,\cF_2\}$,
where $\cF_0 = \{ \cF_{0,0} =\{1,2\},\cF_{0,1}=\{0,3\} \}$, $\cF_1 = \{ \cF_{1,0} =\{0,2\},\cF_{1,1}=\{1,3\} \}$,
$\cF_2 = \{ \cF_{2,0} =\{0,1\},\cF_{2,1}=\{2,3\} \}$, yield the following 8 parallel classes.

From the quadruple $\{ (0,0),(0,1),(0,2),(0,3) \}$ we form the parallel class
\begin{footnotesize}
$$
\{ \{(1,0),(2,0),(0,1),(0,2)\},\{(0,0),(3,0),(1,1),(1,2)\},\{(1,3),(2,3),(3,1),(3,2)\},\{(0,3),(3,3),(2,1),(2,2)\}  \}.
$$
\end{footnotesize}
From the quadruple $\{ (0,0),(0,1),(1,2),(0,3) \}$ we form the parallel class
\begin{footnotesize}
$$
\{ \{(1,0),(2,0),(0,1),(1,2)\},\{(0,0),(3,0),(1,1),(2,2)\},\{(1,3),(2,3),(3,1),(0,2)\},\{(0,3),(3,3),(2,1),(3,2)\}  \}.
$$
\end{footnotesize}
From the quadruple $\{ (0,0),(0,1),(2,2),(0,3) \}$ we form the parallel class
\begin{footnotesize}
$$
\{ \{(1,0),(2,0),(0,1),(2,2)\},\{(0,0),(3,0),(1,1),(3,2)\},\{(1,3),(2,3),(3,1),(1,2)\},\{(0,3),(3,3),(2,1),(0,2)\}  \}.
$$
\end{footnotesize}
From the quadruple $\{ (0,0),(0,1),(3,2),(0,3) \}$ we form the parallel class
\begin{footnotesize}
$$
\{ \{(1,0),(2,0),(0,1),(3,2)\},\{(0,0),(3,0),(1,1),(0,2)\},\{(1,3),(2,3),(3,1),(2,2)\},\{(0,3),(3,3),(2,1),(1,2)\}  \}.
$$
\end{footnotesize}
From the quadruple $\{ (0,0),(1,1),(0,2),(0,3) \}$ we form the parallel class
\begin{footnotesize}
$$
\{ \{(1,0),(2,0),(1,1),(0,2)\},\{(0,0),(3,0),(2,1),(1,2)\},\{(1,3),(2,3),(0,1),(3,2)\},\{(0,3),(3,3),(3,1),(2,2)\}  \}.
$$
\end{footnotesize}
From the quadruple $\{ (0,0),(1,1),(1,2),(0,3) \}$ we form the parallel class
\begin{footnotesize}
$$
\{ \{(1,0),(2,0),(1,1),(1,2)\},\{(0,0),(3,0),(2,1),(2,2)\},\{(1,3),(2,3),(0,1),(0,2)\},\{(0,3),(3,3),(3,1),(3,2)\}  \}.
$$
\end{footnotesize}
From the quadruple $\{ (0,0),(1,1),(2,2),(0,3) \}$ we form the parallel class
\begin{footnotesize}
$$
\{ \{(1,0),(2,0),(1,1),(2,2)\},\{(0,0),(3,0),(2,1),(3,2)\},\{(1,3),(2,3),(0,1),(1,2)\},\{(0,3),(3,3),(3,1),(0,2)\}  \}.
$$
\end{footnotesize}
From the quadruple $\{ (0,0),(1,1),(3,2),(0,3) \}$ we form the parallel class
\begin{footnotesize}
$$
\{ \{(1,0),(2,0),(1,1),(3,2)\},\{(0,0),(3,0),(2,1),(0,2)\},\{(1,3),(2,3),(0,1),(2,2)\},\{(0,3),(3,3),(3,1),(1,2)\}  \}.
$$
\end{footnotesize}

\end{example}

\subsubsection{Type 4 for odd $t$}
\label{sec:T211odd}

Let $\cF= \{ \cF_0,\cF_1,\ldots,\cF_{t-1}\}$ be a near-one-factorization of $K_t$ and
let $\{ \cH_i ~:~ 1 \leq i \leq 6\}$ be a set with six pairwise disjoint subsets from Group 5, i.e. configuration (1,1,1,1),
where the subset $\cH_i$ is associated with the subset $\cM_i$, for $1 \leq i \leq 6$.
For example $\cH_1$ is a set with $t^3$ quadruples defined by
$$
\cH_1 \triangleq \{ \{ (i,0),(j,1),(k,2),(i+j+k,3) \}  ~:~ i,j,k \in \Z_t \}.
$$
For each $i,j,k \in \Z_t$ there is exactly
one quadruple $\{ (i,0),(j,1),(k,2),(\ell,3) \} \in \cH_1$ which contains the triple $\{ (i,0),(j,1),(k,2) \}$ and exactly
one quadruple $\{ (i,0),(j,1),(j,2),(\ell,3) \} \in \cH_1$ which contains the triple $\{ (j,1),(k,2),(\ell,3) \}$.

Related subsets $\cH_i$'s  of quadruples are
defined for each $\cM_i$, for each $2 \leq i \leq 6$.

\begin{remark}
The $\cH_i$'s have similar properties as the $\cU_i$'s. But, the $\cH_i$'s are pairwise disjoint,
while the $\cU_i$'s might not be disjoint as they were defined.
\end{remark}

Given a quadruple $X=\{ (i,0),(j,1),(k,2),(\ell,3) \}$ of the subset $\cH_1$ we form
a set $\cR_{i,j,k}$ in $\dP$ which contains $X$ and the following set with $t-1$ quadruples
$$
\{ \{ (x,0),(y,0),(j +r+1,1),(k+r+1,2) \} ~:~ \{ x,y \} = \cF_{i,r}, ~ 0 \leq r \leq \frac{t-3}{2} \}
$$
$$
\bigcup \{ \{ (j-r-1,1),(k-r-1,2),(x,3),(y,3)\} ~:~ \{ x,y \} = \cF_{\ell,r}, ~ 0 \leq r \leq \frac{t-3}{2} \}.
$$
Related subsets of quadruples are defined for each $\cH_i$ and $\cM_i$, for each $2 \leq i \leq 6$.

The following lemma has a proof similar to the one of Lemma~\ref{lem:T211even}.
\begin{lemma}
\label{lem:T211odd}
$~$
\begin{enumerate}
\item Each set $\cR_{i,j,k}$ defined by $\cH_1$ is a parallel class of Type 4.

\item Each quadruple from configurations $(2,1,1,0)$ and $(0,1,1,2)$ is contained in exactly one
of the defined parallel classes.

\item The total number of parallel classes of Type 4 (for $\bigcup_{i=1}^6 \cH_i$) is $6 t^3$.
Each such parallel class contains $t-1$ quadruples from Group 4 and one quadruple from Group 5.
\end{enumerate}
\end{lemma}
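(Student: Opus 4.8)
The plan is to mirror the proof of Lemma~\ref{lem:T211even}, adapting each step to odd $t$, where the one-factorization of $K_t$ is replaced by a near-one-factorization and the single Group~5 quadruple $X$ absorbs the effect of the isolated vertices.

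First I would establish Claim~1 by verifying directly that $\cR_{i,j,k}$ is a partition of $\Z_t \times \Z_4$. The set contains $X$, the $(t-1)/2$ quadruples built from the pairs $\cF_{i,r}$ in layers $0,1,2$, and the $(t-1)/2$ quadruples built from the pairs $\cF_{\ell,r}$ in layers $1,2,3$, for a total of $1+(t-1)=t$ quadruples, which is the correct size of a parallel class. For the partition property I would check the four layers separately. In layer~$0$ the pairs $\cF_{i,r}$, $0 \le r \le (t-3)/2$, cover $\Z_t \setminus \{i\}$ exactly once because $\cF_i$ is a near-one-factor whose isolated vertex is $i$, and $X$ supplies the missing point $(i,0)$; symmetrically, in layer~$3$ the pairs $\cF_{\ell,r}$ cover $\Z_t \setminus \{\ell\}$ while $X$ supplies $(\ell,3)$. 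In layer~$1$ the points used are $(j,1)$ from $X$, the points $(j+r+1,1)$ from the first family, and the points $(j-r-1,1)$ from the second family; these are exactly the $t$ consecutive residues $j-(t-1)/2,\dots,j,\dots,j+(t-1)/2$ modulo $t$, hence all of $\Z_t$. Layer~$2$ is identical with $k$ replacing $j$. Reading off configurations then shows $X$ is from Group~5 and the other $t-1$ quadruples have configurations $(2,1,1,0)$ or $(0,1,1,2)$, i.e.\ belong to the subset $\cM_1$ of Group~4, which is precisely the type assertion of Claim~1.

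Next I would prove Claim~2 by inverting the construction. A quadruple of configuration $(2,1,1,0)$ has the form $\{(x,0),(y,0),(a,1),(b,2)\}$ with $x \ne y$; since $\cF$ is a near-one-factorization, the edge $\{x,y\}$ lies in a unique near-one-factor $\cF_i$ at a unique position $r$, which fixes $i$ and $r$, and then $a=j+r+1$, $b=k+r+1$ determine $j$ and $k$. Hence the quadruple appears in exactly one $\cR_{i,j,k}$ obtained from $\cH_1$. The mirror argument handles configuration $(0,1,1,2)$: the layer-$3$ pair fixes $\ell$ and $r$, the layer-$1$ and layer-$2$ coordinates fix $j$ and $k$, and the defining relation $\ell=i+j+k$ of $\cH_1$ recovers $i$.

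Finally, for Claim~3 I would note that the assignment $X \mapsto \cR_{i,j,k}$ is injective, since each class contains exactly one Group~5 quadruple, namely its own $X$, from which $(i,j,k)$ is recoverable via $\ell=i+j+k$. Thus $\cH_1$ contributes $|\cH_1|=t^3$ distinct parallel classes, the five analogous constructions attached to $\cM_2,\dots,\cM_6$ (using the pairwise disjoint sets $\cH_2,\dots,\cH_6$) contribute $t^3$ each, and their pairwise disjointness makes all $6t^3$ classes distinct; each class contains $t-1$ Group~4 quadruples and one Group~5 quadruple. As in the even case, the main obstacle is the partition check of Claim~1; the only genuinely new ingredient is the interplay between the isolated vertices of the near-one-factors, which force the factor indices to equal $i$ in layer~$0$ and $\ell$ in layer~$3$, and the parity fact that for odd $t$ the window $\{j-(t-1)/2,\dots,j+(t-1)/2\}$ wraps around $\Z_t$ exactly once.
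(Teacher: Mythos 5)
Your proposal is correct and takes essentially the same approach as the paper: the paper's proof simply declares the lemma ``similar to the one of Lemma~\ref{lem:T211even}'' (direct verification of the partition property plus a count of the classes), and your layer-by-layer coverage check using the isolated vertices of $\cF_i$ and $\cF_\ell$, the inversion argument recovering $(i,j,k)$ from a quadruple of configuration $(2,1,1,0)$ or $(0,1,1,2)$, and the $6t^3$ count over the pairwise disjoint sets $\cH_1,\dots,\cH_6$ supply exactly the details the paper leaves to the reader.
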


%

\subsection{Parallel Classes of Type 5}
\label{sec:1111}

Type 5 contains quadruples only from Group 5 which has the unique configuration $(1,1,1,1)$.
There are $t^4$ quadruples in this configuration. Each parallel class contains $t$ quadruples and
hence there are $t^3$ possible pairwise disjoint parallel classes which contain all the quadruples from Group~5.
They can be easily constructed in a few ways.
But, some of them might have been used with quadruples from other configurations
in Type 1, Type 3, or Type 4. Hence, our construction will take
into account the quadruples which are used with the configurations in other types. We start with
the following four subsets of quadruples from configuration $(1,1,1,1)$.
$$
\cA \triangleq \{ \{ (i,0),(i,1),(i,2),(i,3) \} ~:~ i \in \Z_t \},
$$
$$
\cB \triangleq \{ \{ (i,0),(0,1),(i,2),(0,3) \} ~:~ i \in \Z_t \},
$$
$$
\cC \triangleq \{ \{ (i,0),(i,1),(0,2),(0,3) \} ~:~ i \in \Z_t \},
$$
$$
\cD \triangleq \{ \{ (i,0),(0,1),(0,2),(0,3) \} ~:~ i \in \Z_t \}.
$$

For $X= \{ (x_0,0),(x_1,1),(x_2,2),(x_3,3) \}$ and $Y= \{ (y_0,0),(y_1,1),(y_2,2),(y_3,3) \}$
the addition $X+Y$ is defined by
$$
X+Y \triangleq \{ (x_0+y_0,0),(x_1+y_1,1),(x_2+y_2,2),(x_3+y_3,3)  \}.
$$
Recall that the computation in the first coordinate is performed modulo $t$.

For two subsets of quadruples $\cS_1$ and $\cS_2$ from configuration $(1,1,1,1)$, the addition $\cS_1 + \cS_2$ is defined by
$$
\cS_1 + \cS_2 \triangleq \{ X+Y ~:~ X \in \cS_1, ~ Y \in \cS_2 \}.
$$

\begin{lemma}
\label{lem:allt4}
The set of quadruples
$$
\cA + \cB + \cC + \cD = \{ X+Y+Z+V ~:~ X\in \cA, ~Y \in \cB,~ Z\in \cC,~ V \in \cD \}
$$
consists  of all the $t^4$ quadruples of $\Z_t \times Z_4$ from configuration $(1,1,1,1)$.
\end{lemma}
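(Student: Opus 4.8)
The plan is to read off, for each of the four families, its first-coordinate vector across the layers $\Z_t\times\{0\},\ldots,\Z_t\times\{3\}$, to add these four vectors under the layerwise addition introduced above, and then to show that the resulting map $\Z_t^4\to\Z_t^4$ is a bijection by exhibiting an explicit inverse. Writing $A_a,B_b,C_c,D_d$ for the generators of $\cA,\cB,\cC,\cD$ indexed by $a,b,c,d\in\Z_t$, their first-coordinate vectors (listing the layers $0,1,2,3$) are $(a,a,a,a)$, $(b,0,b,0)$, $(c,c,0,0)$, and $(d,0,0,0)$. Since every generator contains exactly one point in each layer $\Z_t\times\{i\}$, and the addition acts on first coordinates layer by layer, every sum $A_a+B_b+C_c+D_d$ again has exactly one point per layer and is therefore a quadruple of configuration $(1,1,1,1)$. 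Hence $\cA+\cB+\cC+\cD$ is contained in the set of $t^4$ such quadruples, and since the index set $\Z_t^4$ also has size $t^4$, it suffices to prove surjectivity (equivalently, bijectivity).

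First I would compute the sum explicitly. Adding the four vectors componentwise gives
$$
A_a+B_b+C_c+D_d=\{(a+b+c+d,0),(a+c,1),(a+b,2),(a,3)\},
$$
so the whole claim reduces to showing that the map $\varphi\colon\Z_t^4\to\Z_t^4$ defined by $\varphi(a,b,c,d)=(a+b+c+d,\,a+c,\,a+b,\,a)$ is onto.

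The decisive step is to invert $\varphi$ directly, working from the last layer upward. Given a target $(p_0,p_1,p_2,p_3)\in\Z_t^4$, the layer-$3$ component forces $a=p_3$; the layer-$2$ component $a+b=p_2$ then forces $b=p_2-p_3$; the layer-$1$ component $a+c=p_1$ forces $c=p_1-p_3$; and finally the layer-$0$ component $a+b+c+d=p_0$ forces $d=p_0-p_1-p_2+p_3$. These four values lie in $\Z_t$ and are uniquely determined, so $\varphi$ is a bijection. Equivalently, the integer coefficient matrix of $\varphi$ has an integer inverse, hence is invertible over $\Z_t$ for every modulus $t$. Therefore every quadruple of configuration $(1,1,1,1)$ arises exactly once as some $A_a+B_b+C_c+D_d$, proving the lemma.

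I do not anticipate a genuine obstacle here: the only point requiring care is that $\varphi$ has a triangular structure (the last layer depends on $a$ alone, the next on $a$ and $b$, and so on), so the back-substitution is well defined over $\Z_t$ for every $t$ and requires no divisions or coprimality assumptions.
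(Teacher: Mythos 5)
Your proof is correct and rests on exactly the same fact as the paper's: the invertibility over $\Z_t$ of the triangular linear system relating $(a,b,c,d)$ to the four layer coordinates $(a{+}b{+}c{+}d,\,a{+}c,\,a{+}b,\,a)$. The paper argues the dual side of this coin---assuming two parameter tuples yield the same quadruple, deriving the same four congruences, and concluding injectivity before counting $t^4$ against $t^4$---whereas you exhibit the explicit inverse by back-substitution, a routine variant of the same argument.
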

\begin{proof}
First, assume for the contrary that two quadruples of $\Z_t \times \Z_4$ are generated in two
different ways in $\cA + \cB + \cC + \cD$. Assume that there exist eight parameters $i_1,i_2,i_3,i_4$,
and $j_1,j_2,j_3,j_4$ such that $(i_1,i_2,i_3,i_4) \neq (j_1,j_2,j_3,j_4)$ and the two quadruples
\begin{footnotesize}
$$
\{ (i_1,0),(i_1,1),(i_1,2),(i_1,3) \} + \{ (i_2,0),(0,1),(i_2,2),(0,3) \} + \{ (i_3,0),(i_3,1),(0,2),(0,3) \} + \{ (i_4,0),(0,1),(0,2),(0,3) \}
$$
\end{footnotesize}
and
\begin{footnotesize}
$$
\{ (j_1,0),(j_1,1),(j_1,2),(j_1,3) \} + \{ (j_2,0),(0,1),(j_2,2),(0,3) \} + \{ (j_3,0),(j_3,1),(0,2),(0,3) \} + \{ (j_4,0),(0,1),(0,2),(0,3) \},
$$
\end{footnotesize}
are equal. This implies that
$$
i_1 + i_2 + i_3 + i_4 \equiv j_1 + j_2 + j_3 + j_4 ~ (\text{mod}~t),
$$
$$
i_1 + i_3 \equiv j_1 +j_3 ~ (\text{mod}~t),
$$
$$
i_1 + i_2 \equiv j_1 + j_2 ~ (\text{mod}~t),
$$
$$
i_1 \equiv j_1 ~ (\text{mod}~t).
$$
This set of equations has exactly one solution which is $(i_1,i_2,i_3,i_4) = (j_1,j_2,j_3,j_4)$, a contradiction.
Hence, all the $t^4$ quadruples formed by $X+Y+Z+V$, where $X\in \cA$, $Y \in \cB$, $Z\in \cC$, and $V \in \cD$ are distinct,
which implies the claim of the lemma.
\end{proof}

The following lemma can be easily verified.
\begin{lemma}
\label{lem:parallel1111}
The set of quadruples in $\cA$ forms a parallel class and for any quadruple $X$ from configuration $(1,1,1,1)$ the set
$$
X + \cA \triangleq \{ X + Y ~:~ Y \in \cA \}
$$
of $t$ quadruples forms a parallel class from configuration $(1,1,1,1)$.
\end{lemma}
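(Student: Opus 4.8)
The statement has two parts, and the plan is to reduce both to the single fact that, for any fixed $c \in \Z_t$, the translation $i \mapsto c+i$ is a bijection of $\Z_t$.

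First I would settle the claim about $\cA$ itself. Each element of $\cA$ is the quadruple $\{(i,0),(i,1),(i,2),(i,3)\}$, which contains exactly one point in each column $\Z_t \times \{j\}$, $j \in \Z_4$; hence every quadruple in $\cA$ is from configuration $(1,1,1,1)$. Distinct values of $i$ yield quadruples with distinct first coordinates in every column, so the $t$ quadruples are pairwise disjoint, and as $i$ ranges over $\Z_t$ they cover each point $(i,j)$ exactly once. Thus $\cA$ partitions $\Z_t \times \Z_4$ and is a parallel class.

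For the general statement I would write a configuration-$(1,1,1,1)$ quadruple in its generic form $X = \{(x_0,0),(x_1,1),(x_2,2),(x_3,3)\}$. For $Y = \{(i,0),(i,1),(i,2),(i,3)\} \in \cA$, the definition of the addition gives
$$
X + Y = \{(x_0+i,0),(x_1+i,1),(x_2+i,2),(x_3+i,3)\},
$$
which again has exactly one point in each column, so every member of $X + \cA$ is from configuration $(1,1,1,1)$. To show these $t$ quadruples form a parallel class, I would fix a column $j \in \Z_4$ and note that the point of $X+Y$ lying in column $j$ is $(x_j + i, j)$; as $i$ ranges over $\Z_t$, the map $i \mapsto x_j + i$ is a bijection, so column $j$ is covered exactly once. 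Running this over all four columns gives both pairwise disjointness and full coverage of $\Z_t \times \Z_4$, so $X + \cA$ is a parallel class.

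There is no real obstacle here: the crux is simply that the addition never alters the second coordinate, so the configuration type is preserved automatically and no column can collapse, and the partition property is the elementary fact that translation permutes $\Z_t$. As a consistency check, when $X \in \cA$ the set $X + \cA$ is just a permutation of $\cA$, in agreement with the first part.
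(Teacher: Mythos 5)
Your proof is correct. The paper itself gives no argument for this lemma (it is introduced with ``The following lemma can be easily verified''), and your translation argument---that the addition preserves the second coordinate, and that $i \mapsto x_j + i$ is a bijection of $\Z_t$ in each column $j \in \Z_4$, yielding both disjointness and full coverage of $\Z_t \times \Z_4$---is precisely the elementary verification the authors intended to leave to the reader.
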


We are now in position to define the parallel classes of Type 5, i.e. the parallel classes
with quadruples only from configuration $(1,1,1,1)$. For even $t$, this is rather simple since there are no
quadruples from configuration $(1,1,1,1)$ in Types 1, 2, 3, and 4. Since by Lemma~\ref{lem:allt4}, $\cA+\cB+\cC+\cD$ contains
all the quadruples from configuration $(1,1,1,1)$, it follows from Lemma~\ref{lem:parallel1111} that for each quadruples
$X \in \cB+\cC+\cD$, the set $X + \cA$ is a parallel class. This implies that

\begin{lemma}
\label{lem:T1111even}
For even $t$, the set of quadruples from configuration $(1,1,1,1)$ can be partitioned into $t^3$
parallel classes, each one of size $t$.
\end{lemma}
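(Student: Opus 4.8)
The plan is to combine the two preceding lemmas. By Lemma~\ref{lem:allt4} the map sending $(X,Y,Z,V) \in \cA \times \cB \times \cC \times \cD$ to $X+Y+Z+V$ is a bijection onto the $t^4$ quadruples of configuration $(1,1,1,1)$. First I would exploit the commutativity and associativity of the coordinatewise addition (it is just addition modulo $t$ in the first coordinate) to regroup this four-fold sum as $\cA + \cW$, where $\cW \triangleq \cB + \cC + \cD$.

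Next I would upgrade the four-fold uniqueness of Lemma~\ref{lem:allt4} to the two statements I actually need: that $|\cW| = t^3$ with each element of $\cW$ having a unique representation $Y+Z+V$, and that the map $(X,W) \mapsto X+W$ from $\cA \times \cW$ is a bijection onto all configuration-$(1,1,1,1)$ quadruples. Both follow immediately from Lemma~\ref{lem:allt4}: if $Y_1+Z_1+V_1 = Y_2+Z_2+V_2$ with distinct triples, then adding any fixed $X \in \cA$ contradicts the four-fold uniqueness, which forces $|\cW| = t^3$; and if $X_1+W_1 = X_2+W_2$ with $W_i = Y_i+Z_i+V_i$, the same four-fold uniqueness forces $X_1=X_2$ and $W_1=W_2$.

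Then for each $W \in \cW$, Lemma~\ref{lem:parallel1111} tells us that $W+\cA$ is a parallel class consisting of $t$ quadruples from configuration $(1,1,1,1)$. The bijection established in the previous step does the remaining work: coverage holds because every configuration-$(1,1,1,1)$ quadruple $Q$ has a unique representation $Q = X+W$ and hence lies in $W+\cA$; and disjointness holds because if $W_1+\cA$ and $W_2+\cA$ shared a quadruple, then uniqueness of the $(X,W)$-representation would force $W_1=W_2$. Thus the $t^3$ sets $\{W+\cA : W \in \cW\}$ form the desired partition of configuration $(1,1,1,1)$ into $t^3$ parallel classes, each of size $t$.

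The routine parts (coverage, disjointness, and the count $t^4/t = t^3$) become immediate once the $\cA \times \cW$ bijection is in hand, so I expect no serious obstacle here. The only point requiring a little care is the bookkeeping in the second paragraph, namely correctly deducing the three-fold uniqueness of $\cW$ and the $\cA \times \cW$ bijection from the four-fold uniqueness of Lemma~\ref{lem:allt4}, since everything downstream rests on that deduction.
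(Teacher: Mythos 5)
Your proposal is correct and follows essentially the same route as the paper: the paper likewise observes that Lemma~\ref{lem:allt4} makes $\cA+\cB+\cC+\cD$ a disjoint enumeration of all configuration-$(1,1,1,1)$ quadruples and that, by Lemma~\ref{lem:parallel1111}, each $W+\cA$ with $W\in\cB+\cC+\cD$ is a parallel class, yielding the $t^3$ classes. Your explicit verification that $\abs{\cB+\cC+\cD}=t^3$ and that $(X,W)\mapsto X+W$ is a bijection merely spells out details the paper leaves implicit, so there is no substantive difference.
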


\subsubsection{Type 5 for odd $t$}
\label{sec:1111oddt}

The situation is more complicated in the definition of the quadruples of Type 5 for odd $t$.
We partition the $t^4$ quadruples from configuration $(1,1,1,1)$ into five subsets,
$\cL_1$, $\cL_2$, $\cL_3$, $\cL_4$, $\cL_5$, as follows.
$$
\cL_1 \triangleq \{ \{ (i,0),(0,1),(0,2),(0,3)\} ~:~ 1 \leq i \leq 6 \} + \cA + \cB + \cC
$$
and it is partitioned into six subsets $\cH_1$, $\cH_2$, $\cH_3$, $\cH_4$, $\cH_5$, $\cH_6$, where
$$
\cH_i \triangleq \{ (i,0),(0,1),(0,2),(0,3)\}  + \cA + \cB + \cC,~~~ 1 \leq i \leq 6.
$$

$$
\cL_2 \triangleq \{ \{ (i,0),(i,1),(0,2),(0,3)\} ~:~ 1 \leq i \leq \frac{t-1}{2} \} + \cA + \cB,
$$
$$
\cL_3 \triangleq \{ \{ (i,0),(i,1),(0,2),(0,3)\} ~:~ \frac{t+1}{2} \leq i \leq t-1 \} + \cA + \cB,
$$

$$
\cL_4 \triangleq \{ \{ (i,0),(0,1),(0,2),(0,3)\} ~:~ 7 \leq i \leq \frac{t+11}{2} \} + \cA + \cC,
$$

$$
\cL_5 \triangleq (\cA + \cB + \cC + \cD) \setminus \bigcup_{i=1}^4 \cL_i .
$$

\begin{lemma}
\label{lem:allG5}
$~$
\begin{enumerate}
\item The subsets $\cL_1$, $\cL_2$, $\cL_3$, $\cL_4$, and $\cL_5$ are pairwise disjoint.
\item The subset $\cL_5$ can be partitioned into parallel classes.
\end{enumerate}
\end{lemma}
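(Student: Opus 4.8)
The plan is to put a coordinate system on configuration $(1,1,1,1)$ coming from the decomposition in Lemma~\ref{lem:allt4}, and then to read off each $\cL_i$ as a set cut out by simple constraints in these coordinates. Writing $X_a = \{(a,0),(a,1),(a,2),(a,3)\}\in\cA$ and analogously $X_b\in\cB$, $X_c\in\cC$, $X_d\in\cD$, I would first record that $X_a+X_b+X_c+X_d = \{(a{+}b{+}c{+}d,0),(a{+}c,1),(a{+}b,2),(a,3)\}$. By Lemma~\ref{lem:allt4} the assignment $(a,b,c,d)\mapsto X_a+X_b+X_c+X_d$ is a bijection from $\Z_t^4$ onto the quadruples of configuration $(1,1,1,1)$, so every such quadruple has unique coordinates $(a,b,c,d)$. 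Since quadruple addition adds the four first-coordinate entries componentwise and the formula above is $\Z_t$-linear in $(a,b,c,d)$, adding quadruples adds their coordinates. In particular, for fixed $(b,c,d)$ the quadruples with those coordinates form exactly $X+\cA$ for any representative $X$, and by Lemma~\ref{lem:parallel1111} each such $\cA$-fiber is a parallel class. This is the observation that drives both claims.

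For Claim~1 I would translate the definitions into coordinates. Because $\{(i,0),(0,1),(0,2),(0,3)\}$ has coordinates $(0,0,0,i)$ while $\cA+\cB+\cC$ ranges over all $(a,b,c,0)$, the block $\cH_i$ is precisely $\{X : d=i\}$, so $\cL_1 = \{X : d\in\{1,\dots,6\}\}$. Likewise $\{(i,0),(i,1),(0,2),(0,3)\}$ has coordinates $(0,0,i,0)$, whence $\cL_2 = \{X : d=0,\ c\in\{1,\dots,\tfrac{t-1}{2}\}\}$ and $\cL_3 = \{X : d=0,\ c\in\{\tfrac{t+1}{2},\dots,t-1\}\}$, and $\cL_4 = \{X : b=0,\ d\in\{7,\dots,\tfrac{t+11}{2}\}\}$. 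Reading off the $d$-coordinate, the three sets $\{0\}$, $\{1,\dots,6\}$, $\{7,\dots,\tfrac{t+11}{2}\}$ are pairwise disjoint as subsets of $\Z_t$, so $\cL_1$, $\cL_2\cup\cL_3$, and $\cL_4$ are pairwise disjoint, while $\cL_2$ and $\cL_3$ are separated by their $c$-coordinate. Hence $\cL_1,\dots,\cL_4$ are pairwise disjoint, and $\cL_5$ is disjoint from all of them by its very definition as a set difference.

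For Claim~2 I would compute $\cL_5$ in coordinates. Each removed set is cut out by conditions involving only $b$, $c$, $d$: $\cL_1$ by $d\in\{1,\dots,6\}$, $\cL_2\cup\cL_3$ by $d=0$ together with $c\ne 0$, and $\cL_4$ by $b=0$ together with $d\in\{7,\dots,\tfrac{t+11}{2}\}$. Taking the complement inside $\Z_t^4$ therefore yields a set whose membership condition again depends only on $(b,c,d)$: explicitly, $\cL_5$ consists of the quadruples with $c=d=0$; those with $b\ne 0$ and $d\in\{7,\dots,\tfrac{t+11}{2}\}$; and those with $d\in\{\tfrac{t+11}{2}+1,\dots,t-1\}$. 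Since none of these conditions constrains the coordinate $a$, the set $\cL_5$ is a union of complete $\cA$-fibers, and by the first paragraph each such fiber is a parallel class $X+\cA$. Thus $\cL_5$ is partitioned into parallel classes, proving Claim~2.

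The routine part is the coordinate bookkeeping. The one genuine point requiring care is confirming that the three $d$-ranges do not wrap around and collide modulo $t$, which holds once $\tfrac{t+11}{2}\le t-1$, i.e. $t\ge 13$; the finitely many smaller odd values of $t$ must be checked by hand. The conceptual crux — and what makes Claim~2 almost immediate — is the observation that every $\cL_i$ is defined by conditions on $(b,c,d)$ alone, so that the leftover set $\cL_5$ is automatically a union of the parallel classes $X+\cA$ furnished by Lemma~\ref{lem:parallel1111}.
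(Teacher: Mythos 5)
Your proof is correct and takes essentially the same route as the paper: both rest on Lemma~\ref{lem:allt4} and Lemma~\ref{lem:parallel1111}, and your partitions by the $d$-, $c$-, and $b$-coordinates are exactly the paper's set-sum decompositions of $\cA+\cB+\cC+\cD$, of $\cA+\cB+\cC$, and of $\cA+\cB+\cC+\cD_1$. Your explicit $(a,b,c,d)$-coordinatization simply makes transparent the paper's concluding observation that $\cL_5=\cS+\cA$ for some set $\cS$ of quadruples, i.e.\ that $\cL_5$ is a disjoint union of $\cA$-fibers, each a parallel class.
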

\begin{proof}
By Lemma~\ref{lem:allt4} we have that
\begin{equation}
\label{eq:eq1}
\Z_t \times \Z_4 = \cA + \cB + \cC + \cD= \{ \{ (i_1,0),(i_2,1),(i_3,2),(i_4,3)\} ~:~ 0 \leq i_1,i_2,i_3,i_4 \leq t-1 \}.
\end{equation}
It is easy to verify that each one of quadruples in $\Z_t \times \Z_4$ is contained
in exactly one of the four subsets of quadruples
\begin{equation}
\label{eq:eq2}
\cA + \cB + \cC, ~~ \cL_1, ~~ \cA + \cB + \cC + \cD_1, ~~ \cA + \cB + \cC + \cD_2,
\end{equation}
where $\cD_1 \triangleq \{ \{ (i,0),(0,1),(0,2),(0,3)\} ~:~ 7 \leq i \leq \frac{t+11}{2} \}$,
$\cD_2 \triangleq \{ \{ (i,0),(0,1),(0,2),(0,3)\} ~:~ \frac{t+13}{2} \leq i \leq t-1 \}$,
and as defined before $\cL_1 = \{ \{ (i,0),(0,1),(0,2),(0,3)\} ~:~ 1 \leq i \leq 6 \} +\cA + \cB + \cC$.

Similarly by the definition of $\cL_2$ and $\cL_3$ we have that
\begin{equation}
\label{eq:eq3}
\cA + \cB + \cC = (\cA + \cB) \cup \cL_2 \cup \cL_3 ~.
\end{equation}
Similar arguments and the definitions of $\cD_1$ and $\cL_4$ imply that
\begin{equation}
\label{eq:eq4}
\cA + \cB + \cC + \cD_1 = \cL_4 \cup (\cA + \{ \{ (i,0),(0,1),(i,2),(0,3) \} ~:~ 1 \leq i \leq t-1  \} + \cC + \cD_1).
\end{equation}
Therefore, by Equations (\ref{eq:eq1}), (\ref{eq:eq2}), (\ref{eq:eq3}), and (\ref{eq:eq4}), we have
$$
\cL_5 = \Z_t \times \Z_4 \setminus (\cL_1 \cup \cL_2 \cup \cL_3 \cup \cL_4)
$$
$$
= (\cA + \cB) \cup (\cA + \{ \{ (i,0),(0,1),(i,2),(0,3) \} ~:~ 1 \leq i \leq t-1  \} + \cC + \cD_1) \cup (\cA + \cB + \cC + \cD_2).
$$
Hence $\cL_5 = \cS + \cA$ for some subset $\cS$ of quadruples from configuration $(1,1,1,1)$ which implies
by Lemma~\ref{lem:parallel1111} that $\cL_5$ can be partitioned into pairwise disjoint parallel classes.
\end{proof}

\begin{lemma}
For any integer $r$, the subset $\cS \triangleq \{ (r,0),(0,1),(0,2),(0,3)\}  + \cA + \cB + \cC$ of configuration $(1,1,1,1)$
contains $t^3$ quadruples and each triple $\{ (x_1,i),(x_2,j),(x_3,k)\}$,
where $|\{i,j,k\}|=3$ is contained in exactly one quadruple of $\cS$.
\end{lemma}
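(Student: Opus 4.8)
The plan is to parametrize the quadruples of $\cS$ explicitly and then reduce both claims to the unique solvability of a triangular linear system over $\Z_t$. First I would expand the defining sum: taking the generators $\{(a,0),(a,1),(a,2),(a,3)\}\in\cA$, $\{(b,0),(0,1),(b,2),(0,3)\}\in\cB$, and $\{(c,0),(c,1),(0,2),(0,3)\}\in\cC$, coordinatewise addition with the fixed quadruple $\{(r,0),(0,1),(0,2),(0,3)\}$ yields
$$
\{\, (r+a+b+c,0),\ (a+c,1),\ (a+b,2),\ (a,3)\, \},
$$
with all arithmetic in the first coordinate taken modulo $t$. Every quadruple of $\cS$ arises this way for a unique $(a,b,c)\in\Z_t^3$: reading off layer $3$ recovers $a$, and then layers $1$ and $2$ recover $c$ and $b$. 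Thus the parametrization $\Z_t^3\to\cS$ is a bijection, so $|\cS|=t^3$, which establishes the first claim (this is exactly the argument of Lemma~\ref{lem:allt4}, restricted to a single coset).

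For the covering property I would first record the global count. A triple $\{(x_1,i),(x_2,j),(x_3,k)\}$ with $|\{i,j,k\}|=3$ is specified by a $3$-subset $L\subseteq\{0,1,2,3\}$ of layers (four choices) together with one point of $\Z_t$ on each of the three layers ($t^3$ choices), so there are exactly $4t^3$ such triples. On the other hand, each of the $t^3$ quadruples of $\cS$ contains exactly four such triples, one for each $3$-subset of its layers, giving $4t^3$ incidences in total. Hence it suffices to prove that no triple lies in two distinct quadruples of $\cS$; the exact-cover conclusion then follows by comparing the two equal counts.

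To prove the at-most-once statement I would fix a layer set $L$ and show that the map sending $(a,b,c)$ to the $L$-triple of the corresponding quadruple is injective (indeed bijective) on $\Z_t^3$. Since a triple determines its layer set, a triple on layers $L$ is contained in a quadruple $q$ precisely when it equals the $L$-triple of $q$, so the at-most-once property is equivalent to this injectivity. For each of the four choices of $L$ the relevant map is linear over $\Z_t$ with a triangular matrix having all diagonal entries equal to $1$; for instance, for $L=\{1,2,3\}$ the triple is $(a+c,\,a+b,\,a)$, from which layer $3$ gives $a$ and then layers $1,2$ give $c,b$, while the cases $\{0,1,2\}$, $\{0,1,3\}$, $\{0,2,3\}$ are handled identically by solving from the layer-$3$ (or layer-$0$) entry inward. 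Because every triple with $|\{i,j,k\}|=3$ belongs to exactly one layer set $L$, each such triple is then contained in exactly one quadruple of $\cS$.

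There is no deep obstacle here beyond organizing the four layer-set cases; the substance is entirely the triangular structure of the four associated systems. The one point worth emphasizing is that invertibility does \emph{not} require $t$ to be prime: each system is triangular with unit diagonal, hence uniquely solvable over $\Z_t$ for every $t$. If one prefers to avoid the case analysis altogether, I would instead verify surjectivity of each $L$-projection directly, exhibiting the explicit preimage $(a,b,c)$ for a target triple as above, and then invoke the same $4t^3=4t^3$ count to upgrade surjectivity to bijectivity.
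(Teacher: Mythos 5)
Your proof is correct and is essentially the paper's argument in expanded form: the paper packages the same computation as the observation that $\cA+\cB+\cC$ is the code generated by the matrix $G$ with rows $(1,1,1,1)$, $(1,0,1,0)$, $(1,1,0,0)$ over $\Z_t$, and its claim that every choice of three columns of $G$ is nonsingular is precisely your unique solvability of the four layer-set systems, with the shift by $\{(r,0),(0,1),(0,2),(0,3)\}$ playing the same inert affine role. Your explicit remark that the relevant determinants are units (in fact $\pm 1$), so that no primality of $t$ is needed, is a useful precision that the paper's one-line ``nonsingular'' claim leaves implicit.
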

\begin{proof}
Consider the matrix
$$
G=\left[
\begin{array}{cccc}
1 & 1 & 1 & 1 \\
1 & 0 & 1 & 0 \\
1 & 1 & 0 & 0
\end{array}
\right],
$$
and the set of codewords in the code $\hat{C} \triangleq \{ (i_1,i_2,i_3) G ~:~ i_1,i_2,i_3 \in \Z_t \}$.
Note that $\cA + \cB + \cC = \{\{ (j_0,0),(j_1,1),(j_2,2),(j_3,3) \} ~:~ (j_0,j_1,j_2,j_3) \in \hat{C} \}$.
The claim follows now  from the observation that
the matrix defined by any three columns of $G$ is nonsingular.
\end{proof}

\begin{corollary}
\label{cor:Hsat}
The subsets $\cH_1,\cH_2,\cH_3,\cH_4,\cH_5,\cH_6$ satisfy the properties required for Type 4 when $t$ is odd.
These subsets of quadruples from Group 5 contained in the parallel classes of Type 4 form $6 t^2$ parallel classes
defined for Type 5 (configuration $(1,1,1,1)$).
\end{corollary}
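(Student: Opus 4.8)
The first thing I would note is that these six sets are nothing but the sets $\cS$ of the preceding lemma: $\cH_i = \{(i,0),(0,1),(0,2),(0,3)\} + \cA + \cB + \cC$ is exactly $\cS$ with $r=i$. So for the first assertion the plan is to read the Type~4 requirements straight off that lemma. The property the seed sets must have in Section~\ref{sec:T211odd} is that, for each configuration in the associated $\cM_i$, every triple of points occupying three distinct values of the second coordinate lies in exactly one quadruple of $\cH_i$; and the preceding lemma delivers precisely this, since it shows every triple $\{(x_1,i),(x_2,j),(x_3,k)\}$ with $|\{i,j,k\}|=3$ is contained in exactly one quadruple of $\cS$. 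What remains for the first assertion is pairwise disjointness, which I would obtain from the generator matrix $G$: a configuration-$(1,1,1,1)$ quadruple with second coordinates $(a,b,c,d)$ lies in the shift $\{(r,0),(0,1),(0,2),(0,3)\}+\cA+\cB+\cC$ precisely when $r \equiv a-b-c+d \pmod t$, so each such quadruple belongs to $\cH_i$ for a unique $i\in\Z_t$; in particular $\cH_1,\dots,\cH_6$ are pairwise disjoint.

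For the second assertion my plan is to display each $\cH_i$ as a disjoint union of $t^2$ parallel classes. Lemma~\ref{lem:allt4} already shows that $\cA+\cB+\cC+\cD$ exhausts $\Z_t\times\Z_4$ with all $t^4$ sums distinct, so the four summands form an internal direct sum and $\cA+\cB+\cC$ is a subgroup of order $t^3$ containing the order-$t$ subgroup $\cA$. I would then split $\cA+\cB+\cC$ into its $t^3/t=t^2$ cosets of $\cA$, indexed by representatives in $\cB+\cC$, and invoke Lemma~\ref{lem:parallel1111} to see that each translate $W+\cA$ is a parallel class. Since translating the whole partition by $\{(i,0),(0,1),(0,2),(0,3)\}$ again yields parallel classes (Lemma~\ref{lem:parallel1111}), each $\cH_i$ becomes a union of $t^2$ parallel classes; combined with the disjointness of the $\cH_i$ this gives the $6t^2$ parallel classes of configuration $(1,1,1,1)$.

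The step I expect to be the main obstacle is not the algebra but the bookkeeping: the same $6t^3$ quadruples $\bigcup_{i=1}^6\cH_i$ serve as the Group~5 seeds inside the Type~4 parallel classes, yet are here asserted also to carry the structure of $6t^2$ Type~5 parallel classes. I would be careful to present these as two \emph{structural} facts about $\bigcup_{i=1}^6\cH_i$ -- the exact-covering property that makes them admissible Type~4 seeds, and the coset decomposition that makes them partitionable into $(1,1,1,1)$-parallel classes -- and to defer the global ``each quadruple exactly once'' accounting to the summary in Section~\ref{sec:main}. The one genuinely substantive ingredient, the exact-covering property, is already in hand from the nonsingularity of every three columns of $G$ in the preceding lemma, which is exactly why this statement should come out as a quick corollary.
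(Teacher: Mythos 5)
Your proposal is correct and follows essentially the route the paper intends: the paper states Corollary~\ref{cor:Hsat} without proof as an immediate consequence of the preceding lemma (each $\cH_i$ is the set $\cS$ with $r=i$, so the exact-covering property for triples with three distinct second coordinates comes from the nonsingularity of any three columns of $G$), and your additional details---pairwise disjointness via the invariant $r\equiv a-b-c+d \pmod t$ and the decomposition of each $\cH_i$ into $t^2$ translates $X+\cA$, each a parallel class by Lemma~\ref{lem:parallel1111}---are exactly the verifications the paper leaves to the reader. Your closing remark on deferring the global accounting to Section~\ref{sec:main} also matches how the paper uses the $6t^2$ count in Cases 2 and 6.
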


\begin{lemma}
\label{lem:like_OA}
$~$
\begin{enumerate}
\item Each pair $\{ (y,0),(z,1) \}$, $y,z \in \Z_t$, is contained exactly $\frac{t-1}{2}$ times in the quadruples of $\cL_2$.
Each pair $\{ (y,2),(z,3) \}$, $y,z \in \Z_t$, is contained exactly $\frac{t-1}{2}$ times in the quadruples of $\cL_2$.

\item Each pair $\{ (y,0),(z,3) \}$, $y,z \in \Z_t$, is contained exactly $\frac{t-1}{2}$ times in the quadruples of $\cL_3$.
Each pair $\{ (y,1),(z,2) \}$, $y,z \in \Z_t$, is contained exactly $\frac{t-1}{2}$ times in the quadruples of $\cL_3$.

\item Each pair $\{ (y,0),(z,2) \}$, $y,z \in \Z_t$, is contained exactly $\frac{t-1}{2}$ times in the quadruples of $\cL_4$.
Each pair $\{ (y,1),(z,3) \}$, $y,z \in \Z_t$, is contained exactly $\frac{t-1}{2}$ times in the quadruples of $\cL_4$.
\end{enumerate}
\end{lemma}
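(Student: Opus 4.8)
The plan is to make each $\cL_m$ ($m=2,3,4$) explicit as a parametrized family of quadruples and then, for each of the six pair-types, solve a small pair of linear congruences modulo $t$ to count appearances. First I would expand the sumsets, recalling that $\cA$ contributes first-coordinates $(a,a,a,a)$, $\cB$ contributes $(b,0,b,0)$, and $\cC$ contributes $(c,c,0,0)$, while the seed of $\cL_2$ and $\cL_3$ is $(i,i,0,0)$ and the seed of $\cL_4$ is $(i,0,0,0)$. Adding the first coordinates levelwise modulo $t$, a typical quadruple of $\cL_2$ or $\cL_3$ is
$$
\{ (i+a+b,0),(i+a,1),(a+b,2),(a,3) \},
$$
with $a,b \in \Z_t$ and $i$ ranging over $\{1,\dots,\frac{t-1}{2}\}$ for $\cL_2$ and over $\{\frac{t+1}{2},\dots,t-1\}$ for $\cL_3$, whereas a typical quadruple of $\cL_4$ is
$$
\{ (i+a+c,0),(a+c,1),(a,2),(a,3) \},
$$
with $a,c \in \Z_t$ and $7 \le i \le \frac{t+11}{2}$. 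At the outset I would record that each of the three seed ranges contains exactly $\frac{t-1}{2}$ values of $i$; for $\cL_4$ this is the identity $\frac{t+11}{2}-7+1=\frac{t-1}{2}$.

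Next, for each of the six claims I would fix the target pair and treat the two coordinate-equations it imposes. For the pair $\{(y,0),(z,1)\}$ in $\cL_2$, the equations $i+a+b=y$ and $i+a=z$ give $a=z-i$ and $b=y-z$; thus for each admissible $i$ there is exactly one pair $(a,b)$. For the pair $\{(y,2),(z,3)\}$ in $\cL_2$ the equations $a+b=y$ and $a=z$ fix $(a,b)$ independently of $i$, so $i$ again ranges freely over its $\frac{t-1}{2}$ values. The remaining four claims, namely the pairs $\{(y,0),(z,3)\}$ and $\{(y,1),(z,2)\}$ for $\cL_3$ and the pairs $\{(y,0),(z,2)\}$ and $\{(y,1),(z,3)\}$ for $\cL_4$, are handled by the identical two-equation computation, each time solving for the two ``coefficient'' parameters in terms of $i$ and the pair. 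Since a quadruple in configuration $(1,1,1,1)$ has a single entry at each level $0,1,2,3$, a given pair is contained in a quadruple if and only if the two specified levels carry the prescribed entries, so counting containments is the same as counting admissible parameter tuples.

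The one point that needs genuine care is distinctness: I must confirm that the $\frac{t-1}{2}$ parameter tuples obtained for a fixed pair yield $\frac{t-1}{2}$ \emph{distinct} quadruples, so that the count is exactly $\frac{t-1}{2}$ and not smaller. For this I would exhibit in each case one coordinate that strictly tracks $i$. For $\{(y,0),(z,1)\}$ in $\cL_2$ the level-$3$ entry equals $a=z-i$, distinct as $i$ varies; for $\{(y,2),(z,3)\}$ the level-$0$ entry equals $i+y$, again distinct in $i$. For $\cL_3$ one uses the level-$1$ entry $i+z$ (pair $\{(y,0),(z,3)\}$) and the level-$3$ entry $y-i$ (pair $\{(y,1),(z,2)\}$); for $\cL_4$ one uses the level-$1$ entry $y-i$ (pair $\{(y,0),(z,2)\}$) and the level-$0$ entry $i+y$ (pair $\{(y,1),(z,3)\}$). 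As distinct entries at a common level force distinct quadruples, each pair appears exactly $\frac{t-1}{2}$ times, as required. I do not anticipate any real obstacle: the entire content is the bookkeeping of these six linear systems, and the only things to get right are the explicit parametrizations of the sumsets and the choice of a varying witness coordinate for distinctness. Note also that $t$ is odd throughout this subsection, so $\frac{t-1}{2}$ is an integer.
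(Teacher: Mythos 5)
Your proposal is correct and follows essentially the same route as the paper: both view $\cL_2$, $\cL_3$, $\cL_4$ as $\frac{t-1}{2}$ translates of $\cA+\cB$ (respectively $\cA+\cC$) and show that each target pair lies in exactly one quadruple per translate, your linear-congruence solving being the explicit form of the paper's ``each pair is contained in exactly one quadruple of $\cA+\cB$'' observation. Your witness-coordinate check that the $\frac{t-1}{2}$ quadruples are pairwise distinct makes explicit a step the paper's proof leaves implicit (the disjointness of the translates), which is a small but genuine improvement in rigor rather than a different approach.
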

\begin{proof}
First note that in $\cA$ there are $t$ quadruples and for any given $x \in \Z_t$ and $i \in \Z_4$, the element $(x,i)$
is contained in exactly one of the quadruples of $\cA$. In $\cB$ each pair $\{(y,0), (0,1)\}$ is contained in exactly one of the
quadruples. Hence, each pair $\{ (y,0),(z,1) \}$, $y,z \in \Z_t$, is contained in exactly one quadruple of $\cA + \cB$.
Therefore, each pair $\{ (y,0),(z,1) \}$, $y,z \in \Z_t$, is also contained in exactly
one quadruple of $\{ (j,0), (j,1), (0,2),(0,3) \} + \cA + \cB$, for any $j \in \Z_t$. Thus,
each pair $\{ (y,0),(z,1) \}$, $y,z \in \Z_t$, is contained in exactly $\frac{t-1}{2}$ times in the quadruples of $\cL_2$.

The other claims in the lemma are proved in exactly the same way.
\end{proof}
\begin{corollary}
\label{cor:Gsat}
The subset $\cL_2$ has the properties required for $\cG_1$ in Type 3 for odd $t$,
the subset $\cL_3$ has the properties required for $\cG_2$ in Type 3 for odd $t$,
and the subset $\cL_4$ has the properties required for $\cG_3$ in Type 3 for odd $t$.
These subsets of quadruples from Group 5 contained in the parallel classes of Type 3
form $3 \binom{t}{2}$ parallel classes of configuration $(1,1,1,1)$.
\end{corollary}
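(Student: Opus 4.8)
The plan is to verify, requirement by requirement, that $\cL_2,\cL_3,\cL_4$ satisfy the conditions imposed on $\cG_1,\cG_2,\cG_3$ in Section~\ref{sec:T22odd}, and then to count the configuration-$(1,1,1,1)$ parallel classes that these three sets constitute. Each $\cG_\ell$ must (i) consist of quadruples from configuration $(1,1,1,1)$, (ii) be disjoint from the other two, (iii) have cardinality $\frac{t-1}{2}t^2$, and (iv) satisfy a prescribed pair-covering condition. I would dispatch (i)--(iii) together. Each $\cL_i$ is by construction a subset of $\cA+\cB+\cC+\cD$, so by Lemma~\ref{lem:allt4} all its quadruples are from configuration $(1,1,1,1)$; the pairwise disjointness of $\cL_2,\cL_3,\cL_4$ is part of Lemma~\ref{lem:allG5}; and for the cardinalities I would use the injectivity of the sum established in the proof of Lemma~\ref{lem:allt4}, which shows every partial sum is injective. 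In particular $\cA+\cB$ and $\cA+\cC$ each have $t^2$ elements, and since the base set adjoined in each case has $\frac{t-1}{2}$ elements, each $\cL_i$ has exactly $\frac{t-1}{2}\cdot t^2$ quadruples.

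The heart of the first assertion is the matching of pair-covering conditions, which is supplied by Lemma~\ref{lem:like_OA}. The condition on $\cG_1$ concerns coordinate pairs in positions $\{0,1\}$ and $\{2,3\}$, and these are each covered $\frac{t-1}{2}$ times by $\cL_2$ by Claim~1 of Lemma~\ref{lem:like_OA}. Likewise the positions $\{0,3\}$ and $\{1,2\}$ required for $\cG_2$ are handled by Claim~2 for $\cL_3$, and the positions $\{0,2\}$ and $\{1,3\}$ required for $\cG_3$ by Claim~3 for $\cL_4$. This yields the identifications $\cL_2\leftrightarrow\cG_1$, $\cL_3\leftrightarrow\cG_2$, and $\cL_4\leftrightarrow\cG_3$.

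For the final count I would exploit the product structure of each set. Since addition of configuration-$(1,1,1,1)$ sets is associative and commutative, I can write $\cL_2=\cS_2+\cA$, where $\cS_2$ is obtained by adjoining $\cB$ to the $\frac{t-1}{2}$-element base of $\cL_2$; by injectivity $\cS_2$ has $\frac{t-1}{2}\cdot t=\binom{t}{2}$ elements, and by Lemma~\ref{lem:parallel1111} each $X+\cA$ with $X\in\cS_2$ is a parallel class from configuration $(1,1,1,1)$. Injectivity of the full sum makes these $\binom{t}{2}$ parallel classes pairwise disjoint, so $\cL_2$ is precisely $\binom{t}{2}$ such parallel classes. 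The same argument applies to $\cL_3$ and to $\cL_4$ (writing $\cL_4=\cS_4+\cA$ with $\cS_4$ built from $\cC$), and summing over the three sets gives the asserted $3\binom{t}{2}$ parallel classes of configuration $(1,1,1,1)$.

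I do not expect a genuine obstacle, since all the real content is carried by Lemmas~\ref{lem:like_OA}, \ref{lem:allt4}, \ref{lem:allG5}, and~\ref{lem:parallel1111}. The only point demanding care is the bookkeeping of coordinate positions---making sure the pair conditions listed for each $\cG_\ell$ in Section~\ref{sec:T22odd} are aligned with the correct claim of Lemma~\ref{lem:like_OA}---together with the elementary check that each base set has the right size $\frac{t-1}{2}$ (for $\cL_4$ this is the identity $\frac{t+11}{2}-7+1=\frac{t-1}{2}$).
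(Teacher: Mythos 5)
Your proposal is correct and follows essentially the same route the paper intends: the paper states Corollary~\ref{cor:Gsat} without a separate proof, as an immediate consequence of Lemma~\ref{lem:like_OA} (which supplies exactly the pair-covering conditions for $\cG_1,\cG_2,\cG_3$), of Lemma~\ref{lem:allG5} (disjointness), and of the additive structure $\cL_i=\cS_i+\cA$ together with Lemma~\ref{lem:parallel1111} for the count of $3\binom{t}{2}$ parallel classes. Your write-up simply makes these implicit verifications (cardinalities via the injectivity in Lemma~\ref{lem:allt4}, the base-size check $\frac{t+11}{2}-7+1=\frac{t-1}{2}$, and the disjoint decomposition of each $\cL_i$ into $\binom{t}{2}$ classes $X+\cA$) explicit, which is exactly the intended argument.
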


\begin{remark}
The definitions of $\cD_1$ and $\cD_2$ given in Lemma~\ref{lem:allG5} imply that the construction for odd $t$ works only if
$(t+13)/2 \leq t-1$ which implies that $t \geq 15$.
\end{remark}

\section{Proof of the Main Result}
\label{sec:main}

In this section we will prove Theorem~\ref{thm:main}, i.e.,
the existence of a quadrupling construction whenever $t \equiv 0,3,4,6,8,9~(\text{mod}~12)$.
For each residue modulo 12 a separate proof will be given.

\noindent
{\bf Case 1: $t \equiv 0~(\text{mod}~12)$.}

In this case the parallel classes are taken exactly as defined in Section~\ref{sec:const_par}.
By Lemma~\ref{lem:T4t0} there are $\binom{t-1}{3}$ parallel classes of Type 1.
By Lemma~\ref{lem:T31} there are $2t(t-1)(t-2)$ parallel classes of Type~2.
By Lemma~\ref{lem:T22even} there are $3(t-1) \binom{t}{2}$ parallel classes of Type 3.
By Lemma~\ref{lem:T211even} there are $6 (t-1) t^2$ parallel classes of Type 4.
By Lemma~\ref{lem:T1111even} there are $t^3$ parallel classes of Type 5.
All these parallel classes together imply that there are a total of
$$
\binom{t-1}{3} + 2t(t-1)(t-2) + 3(t-1) \binom{t}{2} + 6 (t-1) t^2 + t^3 = \binom{4t-1}{3}
$$
parallel classes as required for $\mathrm{BP}(4t,4)$. Thus, if there exist a $\mathrm{BP}(t,4)$ and a $\mathrm{BP}(t,3)$, then there exists a~$\mathrm{BP}(4t,4)$.

\noindent
{\bf Case 2: $t \equiv 3~(\text{mod}~12)$.}

In this case the parallel classes are taken exactly as defined in Section~\ref{sec:const_par},
except for the parallel classes of Type 5.
By Lemma~\ref{lem:T4t3} there are $\binom{t}{3}$ parallel classes of Type 1.
By Lemma~\ref{lem:T31} there are $(4t-1)\binom{t-1}{2}$ parallel classes of Type 2.
Lemma~\ref{lem:rm31f4}, Lemma~\ref{lem:T4t3}, and Lemma~\ref{lem:T31}, imply
that each quadruple from Group 1 and Group 2 is contained in exactly one of the
parallel classes of Type~1 or Type~2.
By Lemma~\ref{lem:T22odd} there are $3 t \binom{t}{2}$ parallel classes of Type 3.
In these $3 t \binom{t}{2}$ parallel classes there are $3 t \binom{t}{2}$ quadruples from Group 5.
By Lemma~\ref{lem:T211odd} there are $6 t^3$ parallel classes of Type 4.
In these $6 t^3$ parallel classes there are $6 t^3$ quadruples from Group 5.
By Lemma~\ref{lem:T22odd}, Lemma~\ref{lem:T211odd}, Lemma~\ref{lem:allG5}, Corollary~\ref{cor:Hsat}, and Corollary~\ref{cor:Gsat},
there are $t^3 - 3 \binom{t}{2} - 6 t^2$ parallel classes of Type 5.
All these parallel classes together imply that there are a total of
$$
\binom{t}{3} + (4t-1)\binom{t-1}{2} + 3 t \binom{t}{2} + 6  t^3 + t^3 - 3 \binom{t}{2} - 6 t^2 =  \binom{4t-1}{3}
$$
parallel classes as required for $\mathrm{BP}(4t,4)$. Thus, if there exist a $\mathrm{BP}(t+1,4)$ and a $\mathrm{BP}(t,3)$, then there exists a~$\mathrm{BP}(4t,4)$.

\noindent
{\bf Case 3: $t \equiv 4~(\text{mod}~12)$.}

By Theorem~\ref{thm:4_8m12}, if there exists a $\mathrm{BP}(t,4)$ then there exists a $\mathrm{BP}(2t,4)$, and since
$2t \equiv 8~(\text{mod}~12)$, Theorem~\ref{thm:4_8m12} also implies that there exists a $\mathrm{BP}(4t,4)$.

\noindent
{\bf Case 4: $t \equiv 6~(\text{mod}~12)$.}

In this case the parallel classes are taken exactly as defined in Section~\ref{sec:const_par},
except for the parallel classes of Type 3.
By Lemma~\ref{lem:T4t2} there are $\binom{t+1}{3}$ parallel classes of Type 1.
By Lemma~\ref{lem:T31} there are $(2t-1)(t-1)(t-2)$ parallel classes of Type 2.
Lemma~\ref{lem:rm22a31f4}, Lemma~\ref{lem:T4t2}, and Lemma~\ref{lem:T31}, imply
that each quadruple from Group 1 and Group 2 is contained in exactly one of the
parallel classes of Type~1 or Type~2.
By Lemma~\ref{lem:T22even} there are $3(t-1) \binom{t}{2}$ parallel classes of Type 3,
from which by Lemma~\ref{lem:T22_t2}, the quadruples of $t-1$ parallel classes are already contained in the parallel classes of Type 1.
By Lemma~\ref{lem:T211even} there are $6 (t-1) t^2$ parallel classes of Type 4.
By Lemma~\ref{lem:T1111even} there are $t^3$ parallel classes of Type 5.
All these parallel classes together imply that there are a total of
$$
\binom{t+1}{3} + (2t-1)(t-1)(t-2) + 3(t-1) \binom{t}{2} -(t-1) + 6 (t-1) t^2 + t^3 = \binom{4t-1}{3}
$$
parallel classes as required for $\mathrm{BP}(4t,4)$. Thus, if there exist a $\mathrm{BP}(t+2,4)$ and a $\mathrm{BP}(t,3)$, then there exists a~$\mathrm{BP}(4t,4)$.

\noindent
{\bf Case 5: $t \equiv 8~(\text{mod}~12)$.}

By Theorem~\ref{thm:4_8m12}, if there exists a $\mathrm{BP}(t,4)$ then there exists a $\mathrm{BP}(2t,4)$, and since
$2t \equiv 4~(\text{mod}~12)$, Theorem~\ref{thm:4_8m12} also implies that there exists a $\mathrm{BP}(4t,4)$.

\noindent
{\bf Case 6: $t \equiv 9~(\text{mod}~12)$.}

This case is the most complicated one. For this case we will have to change the definitions of the
parallel classes of Type 5 for odd $t$. More precisely, we will have to change the definition
of the partition of the $t^4$ quadruples from configuration $(1,1,1,1)$ into five subsets.
We define the following five subsets, where $\cA$, $\cB$, $\cC$, and $\cD$ are defined as
in Section~\ref{sec:1111}.

\begin{remark}
The following definition can also be used for other values of odd $t$. But, the previous definition
is much simpler to verify and can also be seen as an introduction for the more complicated defintion
considered in the next few paragraphs.
\end{remark}

Partition the $t^4$ quadruples from configuration $(1,1,1,1)$ into five subsets,
$\cL'_1$, $\cL'_2$, $\cL'_3$, $\cL'_4$, $\cL'_5$, as follows.
$$
\cL'_1 \triangleq \{ \{ (i,0),(0,1),(0,2),(0,3)\} ~:~ 1 \leq i \leq 6 \} + \cA + \cB + \cC
$$
and it is partitioned into six subsets $\cH_1$, $\cH_2$, $\cH_3$, $\cH_4$, $\cH_5$, $\cH_6$, where
$$
\cH_i \triangleq \{ (i,0),(0,1),(0,2),(0,3)\}  + \cA + \cB + \cC,~~~ 1 \leq i \leq 6.
$$

$$
\cL'_2 \triangleq \{ \{ (i,0),(i,1),(0,2),(0,3)\} ~:~ 1 \leq i \leq \frac{t-1}{2} \} + \cA + \cB,
$$

\begin{remark}
These definitions are the same one as were defined in Section~\ref{sec:1111oddt}.
\end{remark}

$$
\cL'_3 \triangleq \{ (-1,0),(0,1),(0,2),(0,3)\} + \{ \{ (i,0),(i,1),(0,2),(0,3)\} ~:~ \frac{t+1}{2} \leq i \leq t-1 \} + \cA + \cB,
$$

$$
\cL'_4 \triangleq ( \{ \{ (i,0),(0,1),(i,2),(0,3)\} ~:~  1 \leq i \leq \frac{t-1}{2} \} + \cA )
$$
$$
\cup  \{ (7,0),(0,1),(0,2),(0,3) \} + \{ \{ (i,0),(i,1),(0,2),(0,3)\} ~:~ 1 \leq i \leq t-1 ~ \text{and} ~ i \neq -7 \}
$$
$$
+ \{ \{ (i,0),(0,1),(i,2),(0,3)\} ~:~  1 \leq i \leq \frac{t-1}{2} \} + \cA
$$
$$
\cup  \{ (7,0),(-7,1),(0,2),(0,3) \} + \{ \{ (i,0),(0,1),(i,2),(0,3)\} ~:~  1 \leq i \leq \frac{t-1}{2} \} + \cA
$$

$$
\cL'_5 \triangleq (\cA + \cB + \cC + \cD) \setminus \bigcup_{i=1}^4 \cL'_i .
$$

\begin{lemma}
\label{lem:allG5B}
$~$
\begin{enumerate}
\item The subsets $\cL'_1$, $\cL'_2$, $\cL'_3$, $\cL'_4$, and $\cL'_5$ are pairwise disjoint.
\item The subset $\cL'_5$ can be partitioned into parallel classes.
\end{enumerate}
\end{lemma}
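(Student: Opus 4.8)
The plan is to reuse the algebraic picture behind Lemma~\ref{lem:allG5}. I would identify each configuration-$(1,1,1,1)$ quadruple $\{(j_0,0),(j_1,1),(j_2,2),(j_3,3)\}$ with the vector $(j_0,j_1,j_2,j_3)\in\Z_t^4$; under the coordinatewise addition of the paper, $\cA$, $\cA+\cB$, and $\cA+\cB+\cC$ are the nested subgroups generated successively by $(1,1,1,1)$, then $(1,0,1,0)$ (the $\cB$-generator), then $(1,1,0,0)$ (the $\cC$-generator), each of index $t$ in the next, while $\cD$ contributes $(1,0,0,0)$. I would introduce the homomorphism $\sigma\colon\Z_t^4\to\Z_t$, $\sigma(j_0,j_1,j_2,j_3)=j_0-j_1-j_2+j_3$, which vanishes on all three of those generators and sends the $\cD$-generator to $1$; hence $\sigma^{-1}(0)=\cA+\cB+\cC$ and $\sigma$ labels the cosets of $\cA+\cB+\cC$, with $\cA+\cB+\cC+\cD=\Z_t^4$ by Lemma~\ref{lem:allt4}.

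For the second claim the key observation is that each of $\cL'_1,\cL'_2,\cL'_3$ and each of the three pieces of $\cL'_4$ is, by construction, of the form $(\text{finite set})+\cA$ (indeed $+\cA$, $+\cA+\cB$, or $+\cA+\cB+\cC$). Since $\cA$ is a subgroup, every such set is a union of complete $\cA$-cosets; hence so is $\bigcup_{i=1}^{4}\cL'_i$ and so is its complement $\cL'_5$. Writing $\cL'_5=\cS'+\cA$ for a transversal $\cS'$ of the $\cA$-cosets it contains, Lemma~\ref{lem:parallel1111} shows that each $X+\cA$ with $X\in\cS'$ is a parallel class; these are pairwise disjoint and partition $\cL'_5$, which settles the second claim. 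This step uses only that $\cA$ is a subgroup and not any disjointness.

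For the first claim, since $\cL'_5$ is defined as a complement it is automatically disjoint from the others, so it suffices to prove $\cL'_1,\cL'_2,\cL'_3,\cL'_4$ pairwise disjoint, which I would do by computing $\sigma$. By linearity, $\cL'_1\subseteq\sigma^{-1}\{1,\dots,6\}$, $\cL'_2\subseteq\sigma^{-1}(0)$, $\cL'_3\subseteq\sigma^{-1}(-1)$, and the three pieces of $\cL'_4$ lie in $\sigma^{-1}(0)$, $\sigma^{-1}(7)$, $\sigma^{-1}(14)$ respectively, since the shifts $(-1,0,0,0)$, $(7,0,0,0)$, $(7,-7,0,0)$ have $\sigma$-values $-1,7,14$. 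In the admissible range of $t\equiv 9\pmod{12}$ the residues $0,1,\dots,7,14,t-1$ are distinct mod $t$, so all these sets occupy different $\sigma$-cosets and are disjoint, except that $\cL'_2$ and the first piece of $\cL'_4$ both sit in $\sigma^{-1}(0)=\cA+\cB+\cC$. I would separate those two inside $\cA+\cB+\cC$ using the finer subgroup $\cA+\cB$: the $\cC$-multiples $\{i(1,1,0,0)\}$ defining $\cL'_2$ occupy the $(\cA+\cB)$-cosets indexed $1,\dots,\frac{t-1}{2}$, whereas the first piece of $\cL'_4$, built from the $\cB$-multiples $\{i(1,0,1,0)\}$, lies entirely in $\cA+\cB$ (the zero $(\cA+\cB)$-coset); hence they are disjoint.

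The main obstacle is the coset bookkeeping for $\cL'_4$: checking that its three pieces really occupy $\sigma$-cosets $0,7,14$, that these avoid the cosets $1,\dots,6$ of $\cL'_1$ and the coset $-1$ of $\cL'_3$ as well as one another, and that the exclusion $i\neq -7$ in the second piece together with the companion shift in the third does not create an overlap. This is precisely where a lower bound on $t$ enters, the analogue of the condition $t\geq 15$ forced by $\cD_1,\cD_2$ in Lemma~\ref{lem:allG5}; I would record the exact bound and treat any small exceptional $t$ separately. The further pair-coverage properties of these pieces needed later for Types~3 and~4 (cf. Corollaries~\ref{cor:Gsat} and~\ref{cor:Hsat}) are not required here: for the present lemma the exclusion only changes which $(\cA+\cB)$-cosets are used, and because every piece is a translate of $\cA$ the complement $\cL'_5$ remains a clean union of $\cA$-cosets.
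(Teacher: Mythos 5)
Your proof is correct and follows essentially the same route as the paper: the paper's proof likewise rewrites each of $\cL'_1,\cL'_2,\cL'_3,\cL'_4$ as a shift plus $\cA+\cB$ (or $\cA+\cB+\cC$) plus coset representatives and reads off pairwise disjointness from these cosets, and it obtains the second claim exactly as you do, by writing every piece as $\cS+\cA$ and invoking Lemma~\ref{lem:parallel1111}. Your functional $\sigma$ simply makes explicit the coset bookkeeping that the paper dismisses as ``readily verified,'' and your remark that the labels $-1,0,1,\dots,7,14$ must be pairwise distinct modulo $t$ (forcing $t>15$, so that $t=9$ requires separate treatment) is a legitimate caveat that the paper's proof passes over silently.
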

\begin{proof}
By Lemma~\ref{lem:allt4} we have that
\begin{equation}
\label{eq:eq1B}
\Z_t \times \Z_4 = \cA + \cB + \cC + \cD= \{ \{ (i_1,0),(i_2,1),(i_3,2),(i_4,3)\} ~:~ 0 \leq i_1,i_2,i_3,i_4 \leq t-1 \}.
\end{equation}
First, we have to show that the four subsets $\cL'_1$,
$\cL'_2$, $\cL'_3$, and $\cL'_4$ are pairwise disjoint. To see that note that
$$
\cL'_1 = \{ \{ (i,0),(0,1),(0,2),(0,3)\} ~:~ 1 \leq i \leq 6 \} + \cA + \cB + \cC,
$$
$$
\cL'_2 = \{ (0,0),(0,1),(0,2),(0,3)\} + \cA + \cB + \{ \{ (i,0),(i,1),(0,2),(0,3)\} ~:~ 1 \leq i \leq \frac{t-1}{2} \},
$$
$$
\cL'_3 = \{ (-1,0),(0,1),(0,2),(0,3)\} + \cA + \cB + \{ \{ (i,0),(i,1),(0,2),(0,3)\} ~:~ \frac{t+1}{2} \leq i \leq t-1 \},
$$
$$
\cL'_4 = \cD_1 \cup \cD_2 \cup \cD_3, ~ \text{where}
$$
$$
\cD_1 \subseteq \{ (0,0),(0,1),(0,2),(0,3)\} + \cA + \cB + \{ (0,0),(0,1),(0,2),(0,3)\},
$$
$$
\cD_2 \subseteq \{ (7,0),(0,1),(0,2),(0,3)\} + \cA + \cB + \{ \{ (i,0),(i,1),(0,2),(0,3)\} ~:~ 1 \leq i \leq t-1 ~ \text{and} ~ i \neq -7 \},
$$
$$
\cD_3 \subseteq \{ (14,0),(0,1),(0,2),(0,3)\} + \cA + \cB + \{ (-7,0),(-7,1),(0,2),(0,3)\}.
$$
With this representation it is readily verified that these four subsets are pairwise disjoint.
It follows that the five subsets $\cL'_1$, $\cL'_2$, $\cL'_3$, $\cL'_4$, and $\cL'_5$ are pairwise disjoint.

Now, it is readily verified that each subset $\cL'_1$, $\cL'_2$, $\cL'_3$, $\cL'_4$, and $\cL'_5$ can
be written as $\cS + \cA$ for some subset $\cS$ of quadruples from configuration $(1,1,1,1)$ which implies
by Lemma~\ref{lem:parallel1111} that $\cL'_5$ can be partitioned into pairwise disjoint parallel classes.
\end{proof}
The structure of the quadruples in $\cL'_1$, $\cL'_2$, $\cL'_3$, and $\cL'_4$ implies the following result.
\begin{corollary}
\label{cor:AinT1}
The quadruples of the subset $\cA$ are contained in $\cL'_5$ and form the quadruples from
configuration $(1,1,1,1)$ which are used in Type 1 when $t \equiv 1~(\text{mod}~4)$.
\end{corollary}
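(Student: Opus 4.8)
The plan is to prove the two assertions of the corollary separately. The second assertion---that $\cA$ is exactly the set of configuration-$(1,1,1,1)$ quadruples used in Type~1---is immediate: since $t \equiv 9~(\text{mod}~12)$ we have $t \equiv 1~(\text{mod}~4)$, so part~3 of Lemma~\ref{lem:rm22a31a1111f4} applies and states that these quadruples are precisely $\{ \{(i,0),(i,1),(i,2),(i,3)\} : i \in \Z_t \}$, which is the definition of $\cA$. It therefore remains to establish the containment $\cA \subseteq \cL'_5$. By Lemma~\ref{lem:allt4} we have $\cA \subseteq \cA+\cB+\cC+\cD$, and $\cL'_5 = (\cA+\cB+\cC+\cD)\setminus\bigcup_{i=1}^4 \cL'_i$, so it suffices to prove that $\cA$ is disjoint from each of $\cL'_1,\cL'_2,\cL'_3,\cL'_4$.

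First I would encode each configuration-$(1,1,1,1)$ quadruple $\{(j_0,0),(j_1,1),(j_2,2),(j_3,3)\}$ by its vector of first coordinates $(j_0,j_1,j_2,j_3)\in\Z_t^4$; in this encoding $\cA$ is exactly the diagonal $\{(i,i,i,i):i\in\Z_t\}$. Expanding the addition of Section~\ref{sec:1111}, a sum $X+Y+Z+V$ with $X\in\cA$, $Y\in\cB$, $Z\in\cC$, $V\in\cD$ of respective parameters $a,b,c,d$ has first-coordinate vector $(a+b+c+d,\ a+c,\ a+b,\ a)$. I would then introduce the three linear functionals
$$
\sigma(j_0,j_1,j_2,j_3) = j_0-j_1-j_2+j_3, \qquad \beta = j_2-j_3, \qquad \gamma = j_1-j_3,
$$
which, by the uniqueness of representation in Lemma~\ref{lem:allt4}, read off the $\cD$-, $\cB$-, and $\cC$-parameters respectively; on the diagonal $\cA$ all three functionals vanish.

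The heart of the argument is then to verify that membership in each $\cL'_i$, $1\le i\le 4$, forces one of $\sigma,\beta,\gamma$ into a residue set that excludes $0$ modulo~$t$. Using the representations recorded in the proof of Lemma~\ref{lem:allG5B}, I expect to find $\sigma\in\{1,\dots,6\}$ on $\cL'_1$; $\gamma\in\{1,\dots,\frac{t-1}{2}\}$ on $\cL'_2$ (where $\sigma=0$); $\sigma=-1$ on $\cL'_3$; and, splitting $\cL'_4$ into its three pieces $\cD_1,\cD_2,\cD_3$, respectively $\beta\in\{1,\dots,\frac{t-1}{2}\}$, $\sigma=7$, and $\sigma=14$. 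Since the Remark guarantees $t\ge 15$, each of the sets $\{1,\dots,6\}$, $\{-1\}$, $\{7\}$, $\{14\}$ and $\{1,\dots,\frac{t-1}{2}\}$ is disjoint from $\{0\}$ modulo~$t$; as all three functionals vanish on $\cA$, no quadruple of $\cA$ lies in any $\cL'_i$ with $i\le 4$. Hence $\cA\subseteq\cL'_5$, completing the proof.

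The main obstacle I anticipate is the bookkeeping for $\cL'_4$: unlike the other three subsets it is a union of three pieces whose defining translates ($7$, $-7$, and the combined offset producing $\sigma=14$) are somewhat ad~hoc, so one must recompute the first-coordinate vector of each piece carefully before reading off the relevant functional. The remaining subsets are routine once the encoding and the functionals $\sigma,\beta,\gamma$ are in place.
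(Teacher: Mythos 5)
Your proof is correct and follows essentially the same route as the paper, which states the corollary as an immediate consequence of ``the structure of the quadruples in $\cL'_1$, $\cL'_2$, $\cL'_3$, and $\cL'_4$'': your functionals $\sigma,\beta,\gamma$ simply make explicit the unique $(\cB,\cC,\cD)$-parameters from Lemma~\ref{lem:allt4} and verify that the zero parameters of $\cA$ are excluded from each $\cL'_i$, $i\leq 4$, exactly as the representations in the proof of Lemma~\ref{lem:allG5B} suggest. One tiny slip: the $t\geq 15$ remark you cite concerns the Section~\ref{sec:1111oddt} sets $\cL_4$, not the Case~6 definitions, but this is harmless since $t\equiv 9~(\text{mod}~12)$ already forces $t\geq 9$, which keeps $\{1,\dots,6\}$, $-1$, $7$, and $14$ (for odd $t$) nonzero modulo $t$.
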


\begin{lemma}
\label{lem:like_OA9}
$~$
\begin{enumerate}
\item Each pair $\{ (y,0),(z,1) \}$, $y,z \in \Z_t$, is contained exactly $\frac{t-1}{2}$ times in the quadruples of $\cL'_2$.
Each pair $\{ (y,2),(z,3) \}$, $y,z \in \Z_t$, is contained exactly $\frac{t-1}{2}$ times in the quadruples of $\cL'_2$.

\item Each pair $\{ (y,0),(z,3) \}$, $y,z \in \Z_t$, is contained exactly $\frac{t-1}{2}$ times in the quadruples of $\cL'_3$.
Each pair $\{ (y,1),(z,2) \}$, $y,z \in \Z_t$, is contained exactly $\frac{t-1}{2}$ times in the quadruples of $\cL'_3$.

\item Each pair $\{ (y,0),(z,2) \}$, $y,z \in \Z_t$, is contained exactly $\frac{t-1}{2}$ times in the quadruples of $\cL'_4$.
Each pair $\{ (y,1),(z,3) \}$, $y,z \in \Z_t$, is contained exactly $\frac{t-1}{2}$ times in the quadruples of $\cL'_4$.
\end{enumerate}
\end{lemma}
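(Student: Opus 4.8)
The plan is to separate the three claims according to how much new work each requires. Claims~1 and~2 concern sets that are either identical to, or a single translate of, sets already treated in Lemma~\ref{lem:like_OA}, so I would dispatch them by that lemma; claim~3 concerns the genuinely new set $\cL'_4$ and calls for a direct computation. Throughout I would use the fact that adding a fixed quadruple $\{(a_0,0),(a_1,1),(a_2,2),(a_3,3)\}$ permutes $\Z_t\times\Z_4$ bijectively, shifting the first coordinate in class $\ell$ by $a_\ell$; in particular it preserves all pair-multiplicities up to a relabelling of the shifted classes.

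For claim~1, $\cL'_2$ coincides with the set $\cL_2$ of Section~\ref{sec:1111oddt} (as recorded in the remark following its definition), so claim~1 is verbatim claim~1 of Lemma~\ref{lem:like_OA}. For claim~2, I would note that $\cL'_3=\{(-1,0),(0,1),(0,2),(0,3)\}+\cL_3$, i.e. $\cL'_3$ is the translate of the old $\cL_3$ that shifts class~$0$ by $-1$ and fixes classes $1,2,3$. A pair $\{(y,0),(z,3)\}$ in $\cL'_3$ therefore corresponds to $\{(y+1,0),(z,3)\}$ in $\cL_3$, while a pair $\{(y,1),(z,2)\}$ is untouched; in both cases claim~2 of Lemma~\ref{lem:like_OA} supplies the multiplicity $\tfrac{t-1}{2}$.

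The core is claim~3, for which I would decompose $\cL'_4=P_1\cup P_2\cup P_3$ into its three defining summands and track, for each quadruple, the two class-differences $c_0-c_2$ and $c_1-c_3$, writing $c_\ell$ for the first coordinate in class~$\ell$. Expanding the common base block $W=\{\{(i,0),(0,1),(i,2),(0,3)\}:1\le i\le\tfrac{t-1}{2}\}+\cA$ as $\{(i+k,0),(k,1),(i+k,2),(k,3)\}$ and applying the fixed shifts together with the inner summand $\{\{(i',0),(i',1),(0,2),(0,3)\}:1\le i'\le t-1,\ i'\neq-7\}$, I would obtain $(c_0-c_2,c_1-c_3)=(0,0)$ on $P_1=W$; $c_0-c_2=7+i'$ and $c_1-c_3=i'$ on $P_2$, with $i'$ ranging over $\{1,\dots,t-1\}\setminus\{-7\}$; and $(c_0-c_2,c_1-c_3)=(7,-7)$ on $P_3$. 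Hence the values of $c_0-c_2$ realized on the three parts are exactly $\{0\}$, $\Z_t\setminus\{0,7\}$, and $\{7\}$, which partition $\Z_t$; this simultaneously shows that $P_1,P_2,P_3$ are pairwise disjoint and that every pair $\{(y,0),(z,2)\}$ has its difference $y-z$ realized in a single part. Inside that part the two prescribed coordinates fix $i'$ (when it occurs) and one of $i,k$, leaving $i$ free over the $\tfrac{t-1}{2}$ admissible values, and the parametrization of each part is injective, so exactly $\tfrac{t-1}{2}$ distinct quadruples arise. The identical argument run on $c_1-c_3$, whose realized values $\{0\}$, $\Z_t\setminus\{0,-7\}$, $\{-7\}$ again partition $\Z_t$, settles every pair $\{(y,1),(z,3)\}$.

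The delicate point --- and where the constants $7$ and $-7$ do their work --- is verifying that the three parts tile the difference space with neither gap nor overlap. The generic part $P_2$ deliberately excludes $i'=-7$, and the correction part $P_3$ is engineered to restore exactly the single missing difference ($c_0-c_2=7$, equivalently $c_1-c_3=-7$), so that each of the $t$ possible differences is covered once and only once. Confirming this bookkeeping, alongside the injectivity of each part's parametrization, is the only real obstacle; once it is in hand the uniform multiplicity $\tfrac{t-1}{2}$ is immediate, and I would remark that it is consistent with the global count $|\cL'_4|=\tfrac{(t-1)t^2}{2}$.
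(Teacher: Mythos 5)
Your proposal is correct, and for claims 1 and 2 it matches the paper in substance: the paper simply states that the proof for $\cL'_2$ and $\cL'_3$ is exactly as in Lemma~\ref{lem:like_OA}, while you make the slightly cleaner observation that $\cL'_2=\cL_2$ verbatim and $\cL'_3=\{(-1,0),(0,1),(0,2),(0,3)\}+\cL_3$, so the multiplicities transfer by translation invariance (with the harmless relabelling $\{(y,0),(z,3)\}\mapsto\{(y+1,0),(z,3)\}$). For claim 3 the two arguments verify the same facts but are organized differently. The paper completes the middle summand of $\cL'_4$ to the full set $\{(7,0),(0,1),(0,2),(0,3)\}+\{\{(j,0),(j,1),(0,2),(0,3)\}:j\in\Z_t\}+W$ (where $W$ is your $P_1$), shows this completed set carries every pair $\{(y,0),(z,2)\}$ and $\{(y,1),(z,3)\}$ with multiplicity $\frac{t-1}{2}$, and then argues by exchange: the two slices present in the completed set but missing from $\cL'_4$ (the $j=0$ and $j=-7$ slices) contain exactly the same $0$--$2$ and $1$--$3$ pairs, with the same multiplicities, as the two slices $P_1$ and $P_3$ that $\cL'_4$ contains instead. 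You instead compute the class-differences $c_0-c_2$ and $c_1-c_3$ directly on the three parts and check that the realized values $\{0\}$, $\Z_t\setminus\{0,7\}$, $\{7\}$ (respectively $\{0\}$, $\Z_t\setminus\{0,-7\}$, $\{-7\}$) tile $\Z_t$, then count $\frac{t-1}{2}$ quadruples within each difference class via the free parameter $i$ (note only that for a $0$--$2$ pair in $P_2$ the prescribed coordinates fix $i'$ and the \emph{sum} $i+k$ rather than ``one of $i,k$''; the count is unaffected). Your version is more self-contained and makes the pairwise disjointness of $P_1,P_2,P_3$ explicit, which the paper's exchange step leaves implicit; the paper's version buys a shorter verification by inheriting balance from the Latin-square-like completed set. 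Both hinge on the same point: excluding $i'=-7$ removes exactly the difference pair $(7,-7)$, which $P_3$ restores, while $P_1$ supplies $(0,0)$, and all of this is sound since $7\not\equiv 0\not\equiv -7\pmod t$ for $t\equiv 9\pmod{12}$.
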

\begin{proof}
The proof for $\cL'_2$ and $\cL'_3$ is exactly as in the proof of Lemma~\ref{lem:like_OA}.
Hence, we concentrate in the proof of the claim for $\cL'_4$.

First note that in $\cA$ there are $t$ quadruples and for any given $x \in \Z_t$ and $i \in \Z_4$, the element $(x,i)$
is contained in exactly one of the quadruples of $\cA$. Next, we consider the subset
$$
\{ \{ (i,0),(0,1),(i,2),(0,3)\} ~:~  1 \leq i \leq \frac{t-1}{2} \} + \cA .
$$
Each pair $\{(i,0), (i,2)\}$ and each pair $\{(j,1), (j,3)\}$, where $i,j \in \Z_t$, is contained in exactly
$\frac{t-1}{2}$ quadruples of this subset. Consider now the subset
$$
\{ \{ (j,0),(j,1),(0,2),(0,3)\} ~:~ j \in \Z_t \} + \{ \{ (i,0),(0,1),(i,2),(0,3)\} ~:~  1 \leq i \leq \frac{t-1}{2} \} + \cA .
$$
In this subset each pair $\{(i,0), (j,2)\}$, $i,j \in \Z_t$, and each pair $\{(k,1), (m,3)\}$, $k,m \in \Z_t$, is contained in exactly
$\frac{t-1}{2}$ quadruples. The same is true for the subset
$$
\{ (7,0),(0,1),(0,2),(0,3)\} +\{ \{ (j,0),(j,1),(0,2),(0,3)\} ~:~ j \in \Z_t \}
$$
$$
+ \{ \{ (i,0),(0,1),(i,2),(0,3)\} ~:~  1 \leq i \leq \frac{t-1}{2} \} + \cA .
$$

Consider now the two subsets
{\small
$$
\{ (7,0),(0,1),(0,2),(0,3)\} + \{ (0,0),(0,1),(0,2),(0,3)\}
+ \{ \{ (i,0),(0,1),(i,2),(0,3)\} ~:~  1 \leq i \leq \frac{t-1}{2} \} + \cA
$$
}
and
{\small
$$
\{ (7,0),(0,1),(0,2),(0,3)\} + \{ (-7,0),(-7,1),(0,2),(0,3)\}
+ \{ \{ (i,0),(0,1),(i,2),(0,3)\} ~:~  1 \leq i \leq \frac{t-1}{2} \} + \cA.
$$
}

These two subsets contain the same pairs $\{(i,0), (j,2)\}$, $\{(k,1), (m,3)\}$, $i,j,k,m \in \Z_t$, and their
multiplicity as in the two subsets
$$
\{ \{ (i,0),(0,1),(i,2),(0,3)\} ~:~  1 \leq i \leq \frac{t-1}{2} \} + \cA
$$
and
$$
\{ (7,0),(-7,1),(0,2),(0,3) \} + \{ \{ (i,0),(0,1),(i,2),(0,3)\} ~:~  1 \leq i \leq \frac{t-1}{2} \} + \cA.
$$

Thus, each pair $\{ (y,0),(z,2) \}$, $y,z \in \Z_t$, is contained
in exactly $\frac{t-1}{2}$ times in the quadruples of $\cL'_4$.
Similarly, each pair $\{ (y,1),(z,3) \}$, $y,z \in \Z_t$, is contained
in exactly $\frac{t-1}{2}$ times in the quadruples of $\cL'_4$.
\end{proof}

\begin{corollary}
\label{cor:Gsat9}
The subset $\cL'_2$ has the properties required for $\cG_1$ in Type 3 for odd $t$,
the subset $\cL'_3$ has the properties required for $\cG_2$ in Type 3 for odd $t$,
and the subset $\cL'_4$ has the properties required for $\cG_3$ in Type 3 for odd $t$.
These subsets form $3 \binom{t}{2}$ parallel classes of configuration $(1,1,1,1)$.
\end{corollary}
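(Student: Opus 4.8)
The plan is to read off the three requirements that the Type~3 construction for odd $t$ (Section~\ref{sec:T22odd}) imposes on $\cG_1$, $\cG_2$, $\cG_3$ — namely pairwise disjointness, the common cardinality $\frac{t-1}{2}t^2$, and the prescribed pair-coverage multiplicities — and to verify each of them for $\cL'_2$, $\cL'_3$, $\cL'_4$ by invoking the lemmas already established in this section. First I would line up the coverage conditions: the pairs $\{(x,0),(y,1)\}$ and $\{(x,2),(y,3)\}$ that $\cG_1$ must cover $\frac{t-1}{2}$ times are exactly the pairs treated in part~1 of Lemma~\ref{lem:like_OA9} for $\cL'_2$; likewise part~2 of that lemma supplies the coverage of $\{(x,0),(y,3)\}$ and $\{(x,1),(y,2)\}$ required of $\cG_2$ by $\cL'_3$, and part~3 supplies the coverage of $\{(x,0),(y,2)\}$ and $\{(x,1),(y,3)\}$ required of $\cG_3$ by $\cL'_4$. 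Thus the substitution $(\cG_1,\cG_2,\cG_3)=(\cL'_2,\cL'_3,\cL'_4)$ meets the coverage hypotheses verbatim.

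For the remaining two requirements, pairwise disjointness is immediate from part~1 of Lemma~\ref{lem:allG5B}, and the cardinalities follow from the coverage data by a one-line double count: every quadruple of $\cL'_2$, being of configuration $(1,1,1,1)$, contains precisely one pair of the form $\{(\cdot,0),(\cdot,1)\}$, so summing the multiplicity $\frac{t-1}{2}$ over the $t^2$ such pairs yields $|\cL'_2|=\frac{t-1}{2}t^2$, and symmetrically for $\cL'_3$ and $\cL'_4$. At this point $(\cL'_2,\cL'_3,\cL'_4)$ satisfies every property demanded of $(\cG_1,\cG_2,\cG_3)$, which settles the first sentence of the corollary.

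For the parallel-class count I would reuse the structural observation from the proof of Lemma~\ref{lem:allG5B} that each $\cL'_i$ can be written in the form $\cS+\cA$ for a suitable set $\cS$ of configuration-$(1,1,1,1)$ quadruples. By Lemma~\ref{lem:parallel1111} every coset $X+\cA$ is a parallel class of size $t$, so each $\cL'_i$ partitions into $|\cL'_i|/t=\binom{t}{2}$ parallel classes of configuration $(1,1,1,1)$; summing over $i=2,3,4$ gives the claimed $3\binom{t}{2}$ parallel classes, which is precisely the number of Group~5 parallel classes that get absorbed into Type~3 and must therefore be subtracted in the final tally of Type~5.

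I expect the only genuine difficulty to sit inside the already-cited Lemma~\ref{lem:like_OA9}, specifically the coverage claim for $\cL'_4$: its definition is the one that departs from the simpler Section~\ref{sec:1111oddt} construction, splitting into three pieces and using the compensating shift $\{(7,0),(-7,1),(0,2),(0,3)\}$ to repair the pair multiplicities. At the level of the corollary, however, that obstacle has been discharged, and what remains is bookkeeping — correctly matching each requirement on $\cG_1,\cG_2,\cG_3$ to the corresponding part of Lemma~\ref{lem:like_OA9}, and confirming that the coset representation needed for the count is exactly the $\cS+\cA$ form produced en route to Lemma~\ref{lem:allG5B}.
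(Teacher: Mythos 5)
Your proposal is correct and takes essentially the same route as the paper, which states the corollary without a separate proof precisely because, as you argue, parts 1--3 of Lemma~\ref{lem:like_OA9} supply the required pair-coverage multiplicities for $\cG_1,\cG_2,\cG_3$ verbatim, Lemma~\ref{lem:allG5B} gives pairwise disjointness, and the $\cS+\cA$ coset structure together with Lemma~\ref{lem:parallel1111} yields the $3\binom{t}{2}$ parallel classes. Your double count establishing $|\cL'_i|=\frac{t-1}{2}t^2$ (one pair of the relevant form per configuration-$(1,1,1,1)$ quadruple, $t^2$ pairs each covered $\frac{t-1}{2}$ times) is a correct filling-in of a detail the paper leaves implicit.
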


\begin{lemma}
\label{lem:GsatG5}
$~$
\begin{enumerate}
\item In $\cL'_2$ there exists a subset $S$ with exactly $t$ quadruples and the following properties.
The quadruples of $S$ have the form $\{ (i,0),(i,1),(j,2),(j,3)\}$.
For each $i \in \Z_t$, $S$ has exactly one quadruple of the form $\{ (i,0),(i,1),(j,2),(j,3)\}$.
For each $j \in \Z_t$, $S$ has exactly one quadruple of the form $\{ (i,0),(i,1),(j,2),(j,3)\}$.

\item In $\cL'_3$ there exists a subset $S$ with exactly $t$ quadruples and the following properties.
The quadruples of $S$ have the form $\{ (i,0),(j,1),(j,2),(i,3)\}$.
For each $i \in \Z_t$, $S$ has exactly one quadruple of the form $\{ (i,0),(j,1),(j,2),(i,3)\}$.
For each $j \in \Z_t$, $S$ has exactly one quadruple of the form $\{ (i,0),(j,1),(j,2),(i,3)\}$.

\item In $\cL'_4$ there exists a subset $S$ with exactly $t$ quadruples and the following properties.
The quadruples of $S$ have the form $\{ (i,0),(j,1),(i,2),(j,3)\}$.
For each $i \in \Z_t$, $S$ has exactly one quadruple of the form $\{ (i,0),(j,1),(i,2),(j,3)\}$.
For each $j \in \Z_t$, $S$ has exactly one quadruple of the form $\{ (i,0),(j,1),(i,2),(j,3)\}$.
\end{enumerate}
\end{lemma}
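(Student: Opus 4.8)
The plan is to exploit the fact that each of $\cL'_2$, $\cL'_3$, and $\cL'_4$ is a sumset having $\cA$ as one of its summands, and is therefore a disjoint union of translates $X+\cA$. Since $\cA=\{\{(a,0),(a,1),(a,2),(a,3)\}:a\in\Z_t\}$ shifts all four levels by the same amount $a$, each translate $X+\cA$ automatically consists of exactly $t$ quadruples, and any ``paired'' shape of the base quadruple $X$ is preserved as $a$ runs over $\Z_t$. So for each of the three claims I would fix all the non-$\cA$ generators to one carefully chosen value, arrange that the resulting base quadruple already has the prescribed form, and then take $S$ to be that single $\cA$-translate; throughout, I write the \emph{level-$s$ coordinate} of a quadruple for the $\Z_t$-value of its unique point in $\Z_t\times\{s\}$.

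For Claim 1 I would first write a generic quadruple of $\cL'_2$, built from a generator $\{(i,0),(i,1),(0,2),(0,3)\}$ with $1\le i\le\frac{t-1}{2}$, an element $\{(a,0),(a,1),(a,2),(a,3)\}\in\cA$, and an element $\{(b,0),(0,1),(b,2),(0,3)\}\in\cB$, namely
$$\{(i+a+b,0),(i+a,1),(a+b,2),(a,3)\}.$$
Forcing level $0$ to equal level $1$ and level $2$ to equal level $3$ yields $b=0$, and choosing $i=1$ gives
$$S=\{\{(a+1,0),(a+1,1),(a,2),(a,3)\}:a\in\Z_t\}\subseteq\cL'_2$$
of the required shape. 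Claim 3 is even more direct, since the first piece of $\cL'_4$ is $\{\{(i,0),(0,1),(i,2),(0,3)\}:1\le i\le\frac{t-1}{2}\}+\cA$, whose generic quadruple $\{(i+a,0),(a,1),(i+a,2),(a,3)\}$ already has the shape $\{(i',0),(j',1),(i',2),(j',3)\}$; taking $i=1$ again produces a suitable $S$.

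The main obstacle will be Claim 2, which is where the hypothesis that $t$ is odd enters. A generic quadruple of $\cL'_3$, formed from the shift $\{(-1,0),(0,1),(0,2),(0,3)\}$, a generator $\{(i,0),(i,1),(0,2),(0,3)\}$ with $\frac{t+1}{2}\le i\le t-1$, an element of $\cA$, and an element of $\cB$, is
$$\{(i+a+b-1,0),(i+a,1),(a+b,2),(a,3)\}.$$
Demanding level $0=$ level $3$ and level $1=$ level $2$ produces the two conditions $b=1-i$ and $b=i$, hence $2i\equiv1\pmod t$. Because $t$ is odd, $2$ is invertible modulo $t$, so the unique solution is $i=\frac{t+1}{2}$, and I would check that this value indeed lies in the admissible range $\frac{t+1}{2}\le i\le t-1$. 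Setting $b=i=\frac{t+1}{2}$ then collapses the quadruple to $\{(a,0),(a+\frac{t+1}{2},1),(a+\frac{t+1}{2},2),(a,3)\}$, giving the desired $S\subseteq\cL'_3$.

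To finish, I would note that in every case the two paired coordinates are affine functions $a\mapsto a+c$ of the single free parameter $a\in\Z_t$, each with unit leading coefficient and hence a bijection of $\Z_t$. This shows simultaneously that $|S|=t$ and that each value of $i$ (respectively each value of $j$) occurs in exactly one quadruple of $S$, settling all three claims. The only delicate points are the bookkeeping of the sumset arithmetic and the range check for the solution of $2i\equiv1$ in Claim 2.
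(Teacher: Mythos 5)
Your proposal is correct and takes essentially the same approach as the paper: in all three cases you arrive at exactly the paper's choices of $S$ as single $\cA$-translates, namely $\{ (1,0),(1,1),(0,2),(0,3)\}+\cA$ inside $\cL'_2$, $\{ (0,0),(\frac{t+1}{2},1),(\frac{t+1}{2},2),(0,3)\}+\cA$ inside $\cL'_3$, and $\{ (1,0),(0,1),(1,2),(0,3)\}+\cA$ inside $\cL'_4$. The only difference is presentational: you derive these translates by solving the shape constraints (in particular the congruence $2i \equiv 1 \pmod t$, which uses oddness of $t$ to force $i = \frac{t+1}{2}$, with the range check), whereas the paper simply exhibits them and leaves the verification to the reader.
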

\begin{proof}
We define the subset $S$ for each one of the subsets $\cL'_2$, $\cL'_3$, and $\cL'_4$.
For $\cL'_2$ let
$$
S \triangleq  \{ (1,0),(1,1),(0,2),(0,3)\} +\cA .
$$
For $\cL'_3$ let
$$
S \triangleq \{ (-1,0),(0,1),(0,2),(0,3)\} +  \{ (\frac{t+1}{2},0),(\frac{t+1}{2},1),(0,2),(0,3)\}
$$
$$
+ \{ (\frac{t+1}{2},0),(0,1),(\frac{t+1}{2},2),(0,3)\} +\cA = \{ (0,0),(\frac{t+1}{2},1),(\frac{t+1}{2},2),(0,3)\} +\cA.
$$
For $\cL'_4$ let
$$
S \triangleq  \{ (1,0),(0,1),(1,2),(0,3)\} +\cA .
$$
It is readily verified that each subset $S$ is contained in the related set $\cL'_2$, $\cL'_3$, and $\cL'_4$,
and also $S$ satisfy the requirements of the claim.
\end{proof}
\begin{corollary}
\label{cor:rmT3T5}
The subsets $\cL'_2$, $\cL'_3$, and $\cL'_4$, without the subset $S$ (of each one separately),
satisfy the properties required for Type 3 when $t \equiv 1~(\text{mod}~4)$.
These subsets of quadruples from Group 5 which are used in Type 3 form $3 \binom{t}{2} -3$ parallel classes
of configuration $(1,1,1,1)$ defined in Type~5.
\end{corollary}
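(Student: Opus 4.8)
The plan is to derive the corollary from the pair-coverage counts already established for $\cL'_2$, $\cL'_3$, $\cL'_4$, together with the explicit description of the distinguished subsets $S$. Recall from Corollary~\ref{cor:Gsat9} (which rests on Lemma~\ref{lem:like_OA9}) that each of $\cL'_2$, $\cL'_3$, $\cL'_4$ covers its two associated families of pairs exactly $\frac{t-1}{2}$ times, so that each already plays the role of one of the sets $\cG_1$, $\cG_2$, $\cG_3$ demanded by the Type~3 construction for odd $t$ in Section~\ref{sec:T22odd}. The reason this coverage must be adjusted when $t \equiv 1~(\text{mod}~4)$ is that, in this residue class, the Type~1 construction already consumes the ``self-paired'' quadruples of Group~3 together with the diagonal quadruples of $\cA$; these are precisely the quadruples recorded in Lemma~\ref{lem:rm22a31a1111f4} and assembled into parallel classes in Lemma~\ref{lem:T22_t1}. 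Hence the Type~3 sets must avoid re-covering the corresponding \emph{diagonal} pairs, and the correct requirement for $t \equiv 1~(\text{mod}~4)$ is that each diagonal pair be covered $\frac{t-3}{2}$ times and each off-diagonal pair $\frac{t-1}{2}$ times.

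First I would use Lemma~\ref{lem:GsatG5} to pin down what $S$ contributes to the coverage. For $\cL'_2$ the subset $S$ consists of the $t$ quadruples $\{(i,0),(i,1),(j,2),(j,3)\}$, one for each value of $i$ and one for each value of $j$; the only pairs of the relevant types that such a quadruple contains are the diagonal pairs $\{(i,0),(i,1)\}$ and $\{(j,2),(j,3)\}$, and by the bijective property in Lemma~\ref{lem:GsatG5} each diagonal pair occurs in exactly one member of $S$. The same argument applies verbatim to $\cL'_3$ (where $S$ has the shape $\{(i,0),(j,1),(j,2),(i,3)\}$ and meets only the diagonal pairs $\{(i,0),(i,3)\}$ and $\{(j,1),(j,2)\}$) and to $\cL'_4$ (where $S$ has the shape $\{(i,0),(j,1),(i,2),(j,3)\}$ and meets only $\{(i,0),(i,2)\}$ and $\{(j,1),(j,3)\}$). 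Subtracting these counts from the uniform count $\frac{t-1}{2}$ of Lemma~\ref{lem:like_OA9} immediately yields that $\cL'_2\setminus S$, $\cL'_3\setminus S$, and $\cL'_4\setminus S$ cover each diagonal pair $\frac{t-3}{2}$ times and each off-diagonal pair $\frac{t-1}{2}$ times, which is exactly the adjusted property identified in the previous paragraph.

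For the count of parallel classes I would invoke the coset structure. By Lemma~\ref{lem:allG5B} each $\cL'_i$ is of the form $\cS_i + \cA$, and by Lemma~\ref{lem:GsatG5} the distinguished subset satisfies $S = Q_i + \cA$ for an explicit quadruple $Q_i$; since Lemma~\ref{lem:parallel1111} shows that every coset $X+\cA$ is a parallel class and the cosets are pairwise disjoint, $S$ is one of the $\binom{t}{2}$ parallel classes comprising $\cL'_i$. Removing it leaves $\binom{t}{2}-1$ parallel classes in each of $\cL'_2\setminus S$, $\cL'_3\setminus S$, $\cL'_4\setminus S$, for a total of $3\binom{t}{2}-3$ parallel classes of configuration $(1,1,1,1)$ available for Type~5, as claimed.

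The step I expect to be the main obstacle is not the counting but the justification, in the first paragraph, that the reduced diagonal coverage is \emph{exactly} what Type~3 needs when $t \equiv 1~(\text{mod}~4)$. This requires matching the multiplicity with which a diagonal pair, say $\{(i,0),(i,1)\}$, occurs in $\cG_1$ against the nonzero offsets $r$ used when the Type~3 construction pairs the near-one-factor $\cF_i$ with itself: the offset $r=0$ reproduces precisely the self-paired Group~3 quadruples already placed in Type~1 (Lemma~\ref{lem:T22_t1}), so exactly one unit of diagonal coverage must be discarded, leaving the offsets $1,\dots,\frac{t-3}{2}$ to generate the remaining Group~3 quadruples without repetition. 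Once this correspondence between diagonal-pair coverage and offset usage is made precise, the rest of the verification follows the pattern of Corollary~\ref{cor:Gsat} and Lemma~\ref{lem:T22odd}.
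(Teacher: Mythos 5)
Your proposal is correct and follows essentially the same route the paper intends: Corollary~\ref{cor:rmT3T5} is stated without proof as an immediate consequence of Lemma~\ref{lem:like_OA9} (uniform $\frac{t-1}{2}$ pair coverage), Lemma~\ref{lem:GsatG5} (the coset $S = Q + \cA$ meeting each diagonal pair exactly once, so that $\cL'_i \setminus S$ covers diagonal pairs $\frac{t-3}{2}$ times), Lemma~\ref{lem:parallel1111} for the coset-counting giving $3\binom{t}{2}-3$, and the offset-$0$ correspondence with the Type~1 quadruples of Lemma~\ref{lem:T22_t1}, which is precisely the chain you assemble and make explicit. One small correction of wording only: your closing phrase ``available for Type~5'' points the wrong way --- these $3\binom{t}{2}-3$ cosets are the Type-5-style classes \emph{consumed} by Type~3 (and hence subtracted in the Case~6 enumeration), whereas it is the three removed cosets $S$ themselves that survive as genuine Type~5 parallel classes.
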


We continue with this case, where the parallel classes are taken as defined in Section~\ref{sec:const_par},
with several exceptions. These exceptions occur since quadruples from Groups 3 and 5 are
used in Type 1. Furthermore, quadruples of Group 5 are also used in Type 3 and in Type 4.
Hence, our enumeration should be made carefully.
By Lemma~\ref{lem:T4t1} there are $\binom{t+2}{3}$ parallel classes of Type 1.
By Lemma~\ref{lem:rm22a31a1111f4} these parallel classes contain $3(t-1)t$ quadruples
from Group 3 and $t$ quadruples from Group 5.
By Lemma~\ref{lem:T31} we have $(4t-3)\binom{t-1}{2}$ parallel classes of Type 2.
Lemma~\ref{lem:rm22a31a1111f4}, Lemma~\ref{lem:T4t1}, and Lemma~\ref{lem:T31}, imply
that each quadruple from Group 1 and Group 2 is contained in exactly one of the
parallel classes of Type 1 or Type~2.
By Lemma~\ref{lem:T22odd} there are $3 t \binom{t}{2}$ parallel classes of Type 3.
By Lemma~\ref{lem:T22_t1}, the $3(t-1)t$ quadruples of $3t$ parallel classes were already used in Type 1.
By Lemma~\ref{lem:T22odd} each one of these $3 t \binom{t}{2} -3t$ parallel classes contains one quadruple from Group 5.
By Lemma~\ref{lem:T211odd} there are $6 t^3$ parallel classes of Type 4 and
each one contains one quadruple from Group 5.
There are $t^3$ parallel classes of Type 5, from which by Corollary~\ref{cor:AinT1} implies that
one parallel class is used in Type 1.
Combining this with Lemma~\ref{lem:allG5B}, Corollary~\ref{cor:Hsat}, and Corollary~\ref{cor:rmT3T5},
imply that there are $t^3 -1 - 3 \binom{t}{2} +3 - 6 t^2$ parallel classes of Type 5.
Taking all these parallel classes together, there are a total of
$$
\binom{t+2}{3} + (4t-3)\binom{t-1}{2} + 3 t \binom{t}{2} -3t + 6  t^3 + t^3 -1 - 3 \binom{t}{2} +3 - 6 t^2  = \binom{4t-1}{3}
$$
parallel classes as required. Thus, if there exist a $\mathrm{BP}(t+3,4)$ and a $\mathrm{BP}(t,3)$, then there exists a~$\mathrm{BP}(4t,4)$.


The last part of the proof is to give some required initial conditions for our recursive quadrupling
constructions. The only part of our proof that required some initial values for $t$ is for
$t \equiv 3$ which works only from $t \geq 15$.
Hence, an initial condition is required are for $\mathrm{BP}(12,4)$.
There are several ways to construct $\mathrm{BP}(12,4)$.
These constructions are left as an exercise for the interested reader.

Thus, we have completed the proof of Theorem~\ref{thm:main}.

\section{Efficiency of the Constructions}
\label{sec:efficient}

Recall the general definition of the Baranyai partition problem.
Let $X$ be a finite set of size $n$ and $k$ be an integer, where $0\leq k\leq |X|$.
We are interested in a $\mathrm{BP}(n,k)$, $\cP_1 \sqcup \cP_2 \sqcup \cdots \sqcup \cP_{\binom{n-1}{k-1}}$,
where $\cP_i$ is a parallel class and $\sqcup$ is used to denote disjoint union. Such a partition is represented
by an $(n/k)\times \binom{n-1}{k-1}$ matrix $\sf M$, where the $i$-th column of $\sf M$ contains the
$n/k$ blocks in $\cP_i$. We call $\sf M$ an $(n,k)$-Baranyai matrix.

There are several algorithmic problems which can be considered for Baranyai partitions. \\

\noindent {\sc Listing Problem:} Given $n$ and $k$, output an $(n,k)$-Baranyai matrix.  \\

\noindent {\sc Column Generation Problem:} Given $n$, $k$, and $1\leq i\leq \binom{n-1}{k-1}$, output
the $i$-th column, COL$(n,k,i)$, of an $(n,k)$-Baranyai matrix. \\

\noindent {\sc Entry Problem:} Given $n$, $k$, $1\leq i\leq \binom{n-1}{k-1}$, and $1\leq j\leq n/k$,
output the $(i,j)$-th entry, ENT$(n,k,i,j)$, of an $(n,k)$-Baranyai matrix. \\

It is clear that the computational complexity is highest for the {\sc Listing Problem}, followed by
the {\sc Column Generation Problem}, and then the {\sc Entry Problem}. But we also have
\begin{equation*}
\text{
{\sc Listing Problem} $\leq^P$ {\sc Column Generation Problem} $\leq^P$ {\sc Entry Problem},
}
\end{equation*}
where $\leq^P$ denotes polynomial-time Turing reduction.
For general fixed $k$, linear-time algorithms (in the number of $k$-subsets) for the
{\sc Listing Problem} is not known. The known proofs of Baranyai's theorem requires solving network flow problems
and the implementation for Baranyai partition is of a high degree polynomial ($O(n^{3k-2})$ for
the implementation presented in~\cite{BrSc79}; see also~\cite[pp. 536 -- 542]{LiWi01}).
The {\sc Column Generation Problem} and the {\sc Entry Problem} were not considered in previous works
and it is unclear how the network flow algorithm can provide more efficient solutions these problems.
Our methods are more efficient than the solutions via the network flow techniques for these problems. Our solution to the
{\sc Listing Problem} problem is linear in the number of 4-subsets, i.e. its complexity is $O(n^4)$
compared to $O(n^{10})$ using network flow. Our solution to the {\sc Column Generation Problem}
has complexity $O(n)$ and the same for the {\sc Entry Problem}. The efficiency of our constructions
is based on the enumerative coding of Cover~\cite{Cov73}.

We demonstrate the enumerative coding on a $\mathrm{BP}(2^n,4)$ for any $n \geq 2$.
This Baranyai's partition has $\binom{2^n-1}{3}$ parallel classes, each one with $2^{n-2}$ quadruples.
The construction of this Baranyai's partition is presented and analysed in Section~\ref{sec:doubling}.
The construction has three types of parallel classes, Type $\cS$, Type $\cT$, and Type $\cF$.
In Type $\cS$ there are $8 \binom{2^{n-1}-1}{3}$ parallel classess, in Type $\cT$ there are $4(2^{n-1}-1)(2^{n-1}-2)$ parallel classes,
and in Type $\cF$ there are $2^{n-1}-1$ parallel classes. These numbers of parallel classes will be taken into account in
the enumerative coding.

We distinguish now between the three algorithmic problems.

\noindent
The {\sc Column Generation Problem}:

Assume we look for COL$(2^n,4,\beta)$. We distinguish between three cases.

\begin{enumerate}
\item $1 \leq \beta \leq 8 \binom{2^{n-1} -1}{3}$.

In this case the parallel class is of Type $\cS$ (see Section~\ref{sec:cS}).
For this type the recursion generates eight parallel classes for each parallel class
of $\mathrm{BP}(2^{n-1},4)$. Hence, we compute COL$(2^{n-1},4,\lceil \frac{\beta}{8} \rceil)$
and assume that with this recursion we obtain the quadruples $X_1,X_2,\ldots,X_{2^{n-3}}$.
If $\beta = i + 8(\lceil \frac{\beta}{8} \rceil -1)+1$, then COL$(2^n,4,\beta)$ is obtained by applying
the $i$-th parallel class ($i \in \Z_8$ - see Section~\ref{sec:cS}), in the construction for Type $\cS$,
on the quadruples $X_1,X_2,\ldots,X_{2^{n-3}}$ as presented in Section~\ref{sec:cS}.

\item $8 \binom{2^{n-1} -1}{3} < \beta \leq 8 \binom{2^{n-1} -1}{3} + 4(2^{n-1}-1)(2^{n-1}-2)$.

In this case the parallel class is of Type $\cT$ (see Section~\ref{sec:cT}).
For this an efficient construction for resolvable $\mathrm{SQS}(2^{n-1})$ is required. Such a construction
is not difficult to find and in general it can be obtained by considering the constructions in~\cite{Har87,LiZh05}.
In general the complexity for the construction of the parallel class of the resolution for $\mathrm{SQS}(t)$ is $O(t)$.
The number of parallel classes in such a resolution of $\mathrm{SQS}(2^{n-1})$ is $\frac{(2^{n-1}-1)(2^{n-2}-1)}{3}$.
For this type the recursion generates twenty four parallel classes for each parallel class
of $\mathrm{SQS}(2^{n-1})$.
The parallel class which is required is number $\lceil \frac{\beta - 8 \binom{2^{n-1} -1}{3}}{24} \rceil$.
Assume that this parallel class contains the quadruples $X_1,X_2,\ldots,X_{2^{n-3}}$.
If $\beta = i + 24 ( \lceil \frac{\beta - 8 \binom{2^{n-1} -1}{3}}{24} \rceil -1)  +1$,
then COL$(2^n,4,\beta)$ is obtained by applying
the $i$-th parallel class ($i \in \Z_{24}$, where a translation from
$\cR_{\alpha,\beta,\gamma}$ should be made - see Section~\ref{sec:cT}), in the construction for Type $\cT$,
on the quadruples $X_1,X_2,\ldots,X_{2^{n-3}}$ as presented in Section~\ref{sec:cT}.

\item $8 \binom{2^{n-1} -1}{3} + 4(2^{n-1}-1)(2^{n-1}-2) < \beta$.

In this case the the parallel class is of Type $\cF$ (see Section~\ref{sec:cF}).
If $i = \beta - 8 \binom{2^{n-1} -1}{3} - 4 (2^{n-1}-1)(2^{n-1}-2) -1$, then
COL$(2^n,4,\beta)$ is the following set of blocks.
$$
\{ \{ (a,0),(b,0),(a,1),(b,1) \} ~:~  \{ a,b \} \in \cF_i \},
$$
where $\{ \cF_0,\cF_1,\ldots,\cF_{2^{n-1}-2} \}$ is any one-factorization of $K_{2^{n-1}}$.

\end{enumerate}

The complexity of this enumerative coding of the {\sc Column Generation Problem} for $\mathrm{BP}(t,4)$ is $O(t)$.
This immediately implies that an algorithm for the {\sc Entry Problem} can be given with the same
complexity or even a better one. The algorithm can be simpler for the {\sc Listing Problem}, but by applying
the {\sc Column Generation Problem} for all the columns of Baranyai matrix to obtain a solution for the {\sc Listing Problem}
implies complexity $O(n^4)$ which cannot be improved since this is the size of the list (matrix).

\section{Conclusions and Future Work}
\label{sec:conclude}

We have presented a few explicit and efficient recursive quadrupling constructions for $\mathrm{BP}(4t,4)$, where
$t \equiv 0,3,4,6,8,9~(\text{mod}~12)$. In a follow-up paper~\cite{CEKVW20}
we provide a quadrupling construction for the remaining cases, i.e., when $t \equiv 1,2,5,7,10,11~(\text{mod}~12)$.
Once a construction for $t \equiv 1,5,7,11~(\text{mod}~12)$ will be presented,
Theorem~\ref{thm:4_8m12} can be used to apply a quadrupling construction for the
last two cases when $t \equiv 2$ or $10~(\text{mod}~12)$.
A future intriguing work is to present an efficient construction for $\mathrm{BP}(n,5)$
for a sequence of infinite values of $n$.

\end{document}